\documentclass[11pt]{article}
\usepackage{amsmath}
\usepackage{amsfonts}
\usepackage{amssymb}
\usepackage{mathrsfs}
\usepackage{amsthm}
\usepackage{palatino}
\usepackage[margin=2.5cm, vmargin={1.5cm}]{geometry}

\usepackage{dcolumn}

\usepackage{times}
\usepackage{graphicx}
\usepackage{xcolor}

\usepackage{pstricks}
\usepackage{pst-plot}
\usepackage{pst-grad}

\usepackage{wrapfig}

\usepackage{bm}



\renewcommand\footnotemark{}

\begin{document}

\title{Integer continued fractions for complex numbers}

\author{
  Cormac ~O'Sullivan\footnote{{\it Date:} August 20, 2025.
\newline \indent \ \ \
  {\it 2020 Mathematics Subject Classification:}  11Y65, 11F06, 11H55
\newline \indent \ \ \
{\em Key words and phrases.}  Continued fractions, the modular group, binary quadratic forms, cutting sequences.
  \newline \indent \ \ \
Support for this project was provided by a PSC-CUNY Award, jointly funded by The Professional Staff Congress and The City
\newline \indent \ \ \
University of New York.}
  }

\date{}

\maketitle


\def\s#1#2{\langle \,#1 , #2 \,\rangle}

\def\H{{\mathbb{H}}}
\def\F{{\mathfrak F}}
\def\Fd{{\mathcal F}}
\def\Kd{{\mathcal K}}
\def\Gd{{\mathcal G}}
\def\C{{\mathbb C}}
\def\R{{\mathbb R}}
\def\Z{{\mathbb Z}}
\def\Q{{\mathbb Q}}
\def\N{{\mathbb N}}
\def\G{{\Gamma}}
\def\GH{{\G \backslash \H}}
\def\g{{\gamma}}
\def\L{{\Lambda}}
\def\ee{{\varepsilon}}
\def\K{{\mathcal K}}
\def\Re{\mathrm{Re}}
\def\Im{\mathrm{Im}}
\def\PSL{\mathrm{PSL}}
\def\SL{\mathrm{SL}}
\def\GL{\mathrm{GL}}
\def\Vol{\operatorname{Vol}}
\def\lqs{\leqslant}
\def\gqs{\geqslant}
\def\sgn{\operatorname{sgn}}
\def\res{\operatornamewithlimits{Res}}
\def\li{\operatorname{Li_2}}
\def\lip{\operatorname{Li}'_2}
\def\pl{\operatorname{Li}}

\def\ei{\mathrm{Ei}}

\def\clp{\operatorname{Cl}'_2}
\def\clpp{\operatorname{Cl}''_2}
\def\farey{\mathscr F}

\def\dm{{\mathcal A}}
\def\ov{{\overline{p}}}
\def\ja{{K}}

\def\nb{{\mathcal B}}
\def\cc{{\mathcal C}}
\def\nd{{\mathcal D}}

\def\u{{\text{\rm u}}}
\def\v{{\text{\rm v}}}
\def\ib{{\text{\,\rm i}}}
\def\jb{{\text{\,\rm j}}}
\def\qq{{f}}

\def\mg{{\mathcal M_{G}}}
\def\mz{{\mathcal M_{Z}}}
\def\stab{{\text{\rm Stab}}}
\def\aut{{\text{\rm Aut}}}
\def\ze{{z}}

\newcommand{\stira}[2]{{\genfrac{[}{]}{0pt}{}{#1}{#2}}}
\newcommand{\stirb}[2]{{\genfrac{\{}{\}}{0pt}{}{#1}{#2}}}
\newcommand{\eu}[2]{{\left\langle\!\! \genfrac{\langle}{\rangle}{0pt}{}{#1}{#2}\!\!\right\rangle}}
\newcommand{\eud}[2]{{\big\langle\! \genfrac{\langle}{\rangle}{0pt}{}{#1}{#2}\!\big\rangle}}
\newcommand{\norm}[1]{\left\lVert #1 \right\rVert}
\newcommand{\dx}[1]{\overset{*}{#1}}

\newcommand{\e}{\eqref}
\newcommand{\la}{\label}
\newcommand{\bo}[1]{O\left( #1 \right)}
\newcommand{\ol}[1]{\,\overline{\!{#1}}} 


\newtheorem{theorem}{Theorem}[section]
\newtheorem{lemma}[theorem]{Lemma}
\newtheorem{prop}[theorem]{Proposition}
\newtheorem{conj}[theorem]{Conjecture}
\newtheorem{cor}[theorem]{Corollary}
\newtheorem{assume}[theorem]{Assumptions}
\newtheorem{adef}[theorem]{Definition}

\newtheorem*{algo1}{Continued fraction algorithm for $\bm{z \in  \C}$}
\newtheorem*{algo2}{Continued fraction algorithm for $\bm{z \in  \C}$, Version 2}
\newtheorem*{algo3}{Continued fraction algorithm for $\bm{z \in  \C}$, Version 3}

\numberwithin{figure}{section}
\numberwithin{table}{section}


\newcounter{counrem}
\newtheorem{remark}[counrem]{Remark}

\renewcommand{\labelenumi}{(\roman{enumi})}
\newcommand{\spr}[2]{\sideset{}{_{#2}^{-1}}{\textstyle \prod}({#1})}
\newcommand{\spn}[2]{\sideset{}{_{#2}}{\textstyle \prod}({#1})}

\numberwithin{equation}{section}

\let\originalleft\left
\let\originalright\right
\renewcommand{\left}{\mathopen{}\mathclose\bgroup\originalleft}
\renewcommand{\right}{\aftergroup\egroup\originalright}

\bibliographystyle{alpha}

\begin{abstract}
We study a natural extension to complex numbers of the standard continued fractions. The basic algorithm is due to Lagrange and Gauss, though it seems to have gone mostly unnoticed as a way to create continued fractions. The new representations are shown to be  unique, and to have useful properties. They also admit a geometric cutting sequence interpretation.
\end{abstract}

\section{Introduction}
A standard continued fraction takes the  form
\begin{equation*}
  [ a_0,a_1, \dots, a_r ] := a_0+\frac{1}{a_1 + \displaystyle \frac{1}{\ddots + \displaystyle \frac 1{a_r}}}
\end{equation*}
which may also be infinite with  $r \to \infty$. This form is called simple, meaning that all the numerators are $1$. Every  real number can be represented in this way, with integers $a_j$ which are positive for $j\gqs 1$. In this paper we examine an extension to complex numbers that was found in  the work \cite{OStop} on binary quadratic forms.

\SpecialCoor
\psset{griddots=5,subgriddiv=0,gridlabels=0pt}
\psset{xunit=1.5cm, yunit=1.5cm, runit=1.5cm}
\psset{linewidth=1pt}
\psset{dotsize=3pt 0,dotstyle=*}
\begin{figure}[ht]
\centering
\begin{pspicture}(0.5,0)(9,4.2) 

\psset{arrowscale=1.4,arrowinset=0.3,arrowlength=1.1}
\newrgbcolor{light}{0.8 0.8 1.0}
\newrgbcolor{pale}{1 0.7 1}
\newrgbcolor{pale}{1 0.7 0.4}
\newrgbcolor{pale}{0.9179 0.7539 0.5781}
\newrgbcolor{pale}{0.996094, 0.71875, 0.617188}

\rput(6.7,2){%
        \begin{pspicture}(-1.2,-2)(2.2,2)

\pspolygon[linecolor=pale,fillstyle=solid,fillcolor=pale](-0.5,2)(-0.5,-2)(0.5,-2)(0.5,2)(-0.5,2)

\pswedge[linecolor=white,fillstyle=solid,fillcolor=white](-1,0){1}{-90}{90}
\pswedge[linecolor=white,fillstyle=solid,fillcolor=white](1,0){1}{90}{270}

\psarc[linecolor=pale](-1,0){1}{0}{60}
\psarc[linecolor=pale](1,0){1}{180}{240}

\psarc[linestyle=dashed](-1,0){1}{0}{60}
\psarc(1,0){1}{120}{180}
\psarc(-1,0){1}{-60}{0}
\psarc[linestyle=dashed](1,0){1}{180}{240}

\psline(-0.5,2)(-0.5,0.866025)
\psline[linestyle=dashed](0.5,2)(0.5,0.866025)
\psline[linestyle=dashed](-0.5,-2)(-0.5,-0.866025)
\psline(0.5,-2)(0.5,-0.866025)

\psarc[linecolor=light](0,0){1}{-60}{60}
\psarc[linecolor=light](0,0){1}{120}{240}

\psarc[linecolor=light](2,0){1}{120}{240}

\psarc[linecolor=light](1,0){1}{-120}{120}

\psline[linecolor=light](1.5,2)(1.5,0.866025)
\psline[linecolor=light](1.5,-2)(1.5,-0.866025)

\psline[linecolor=gray](-1.2,0)(2.2,0)
\psline[linecolor=gray](-0.5,0.04)(-0.5,-0.04)
\psline[linecolor=gray](0.5,0.04)(0.5,-0.04)
\psline[linecolor=gray](1.5,0.04)(1.5,-0.04)


\psdots(0.5, 0.866025)(-0.5, 0.866025)(0.5, -0.866025)(-0.5, -0.866025)
\rput(0.77,0.9){$_\rho$}
\rput(-0.77,0.9){$_{\rho^2}$}
\rput(0.77,-0.9){$_{\rho^5}$}
\rput(-0.77,-0.9){$_{\rho^4}$}

\rput(-1.25,1.55){$\mathcal F'$}
\rput(0.12,-0.12){$_0$}
\rput(1.12,-0.12){$_1$}
\end{pspicture}}

\rput(2,2){%
        \begin{pspicture}(-1.5,-0.5)(1.5,2.5)
\pspolygon[linecolor=pale,fillstyle=solid,fillcolor=pale](-0.5,2.5)(-0.5,0)(0.5,0)(0.5,2.5)(-0.5,2.5)

\pswedge[linecolor=white,fillstyle=solid,fillcolor=white](0,0){1}{0}{180}

\psarc[linecolor=pale](0,0){1}{60}{90}
\psarc[linestyle=dashed](0,0){1}{60}{90}
\psarc(0,0){1}{90}{120}

\psline(-0.5,2.5)(-0.5,0.866025)
\psline[linestyle=dashed](0.5,2.5)(0.5,0.866025)

\psarc[linecolor=light](0,0){1}{0}{60}
\psarc[linecolor=light](0,0){1}{120}{180}

\psline[linecolor=gray](-1.2,0)(1.2,0)
\psline[linecolor=gray](-0.5,0.08)(-0.5,-0.08)
\psline[linecolor=gray](0.5,0.08)(0.5,-0.08)

\rput(-0.5,-0.22){$_{-1/2}$}
\rput(0.5,-0.22){$_{1/2}$}

\rput(-1.2,1.7){$\mathcal F$}

\psdot(0,1)
\rput(0,0.8){$_i$}

\end{pspicture}}

\end{pspicture}
\caption{The regions $\mathcal F$ and $\mathcal F'$ with $\rho=e^{\pi i/3}$}
\label{funds}
\end{figure}
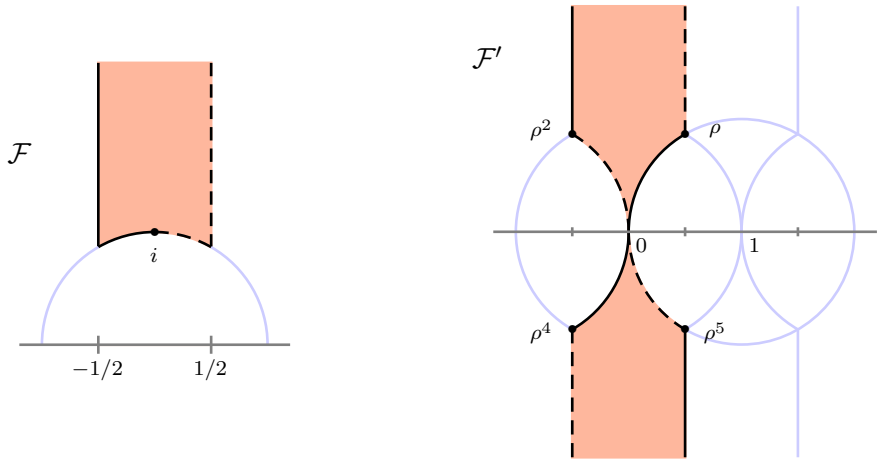

Our key definitions and main theorem are as follows. The group $\SL(2,\Z)$ of $2 \times 2$ integer matrices with determinant $1$  acts on the upper half plane $\H :=\{z\in \C \, : \, \Im(z)>0\}$ by linear fractional transformations.
A standard fundamental domain for this action is
\begin{equation}\label{fdb}
\Fd:= \big\{ z  \in \H \, : \,  -\tfrac 12\lqs \Re(z) < \tfrac 12, \ |z|\gqs 1 \text{ and } |z|=1 \implies \Re(z) \lqs 0\big\},
\end{equation}
 as discussed in Section \ref{hyf}. Let $\Fd'$ be the infinite hourglass shape seen in Figure \ref{funds} that consists of $0$ and the images of $\Fd$ under the maps $z \mapsto \pm z$ and  $z \mapsto \pm 1/z$.  
 
\begin{adef} \la{srf}
{\rm A {\em continued fraction representation} may take an infinite form $[a_0, a_1, a_2, \dots ]$, or a finite form $[a_0, a_1, \dots, a_{m-1}, a_m+z_0]$,  with integers $a_j$ that are positive after the first and with $z_0 \in \Fd'$. A {\em canonical continued fraction representation}
has the extra conditions in the finite case of  $z_0 \neq e^{\pm 2\pi i/3}$ and $a_m\gqs 2$ if $z_0=0$ and $m\gqs 1$.}
\end{adef}

\begin{theorem} \label{main}
Every $z\in \C$ has a canonical continued fraction representation as in Definition \ref{srf}. This representation is unique.
\end{theorem}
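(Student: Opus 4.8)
The plan is to prove existence by running an explicit reduction algorithm and uniqueness by showing that every step of that algorithm is forced by $z$. The first point to record is that, since the partial quotients $a_j$ are ordinary (real) integers, all of the non-real information must be carried by the terminal datum $z_0$: concretely a finite representation satisfies $[a_0,\dots,a_m+z_0]=\dfrac{p_m+p_{m-1}z_0}{q_m+q_{m-1}z_0}$, where $p_j/q_j$ are the usual convergents of $[a_0,\dots,a_m]$, so $z$ is a fixed $\GL(2,\Z)$-transform of $z_0\in\Fd'$. Existence thus becomes the statement that $z$ can be carried into $\Fd'$ by transformations generated by $w\mapsto w+1$ and $w\mapsto 1/w$, with the extracted integer shifts positive after the first; uniqueness becomes the statement that, once the canonical conditions are imposed, this reduction is forced.

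For existence I would split on whether $z$ is real. If $z$ is real this is the classical simple continued fraction: a rational gives a finite expansion with terminal $z_0=0$, normalised so that $a_m\gqs 2$ through the identity $[\dots,a,1]=[\dots,a+1]$, while an irrational gives a convergent infinite expansion, and in both cases $a_j\gqs 1$ for $j\gqs 1$. If $z$ is non-real I would iterate the natural step: subtract the integer $a_j$ that normalises the remainder into the strip $-\tfrac12\lqs\Re<\tfrac12$, stop when the remainder already lies in $\Fd'$, and otherwise invert. The decisive geometric fact is that a strip point lying outside $\Fd'$ has modulus $<1$ (because $\Fd'$ contains every strip point with $|z|\gqs 1$), so its inverse has strictly larger $|\Im|$ and $|\Re|>\tfrac12$; a short case analysis on which boundary arc of $\Fd'$ is crossed then fixes the sign of the inversion $\pm 1/w$ and keeps the next partial quotient positive, using the $z\mapsto\pm z$ symmetry of $\Fd'$. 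Termination for non-real $z$ follows because $|\Im|$ strictly increases at each inversion while only finitely many points of an $\SL(2,\Z)$-orbit have imaginary part exceeding a given bound.

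For uniqueness I would show that $z$ determines every choice. A representation exhibits $z$ either as $\frac{p_m+p_{m-1}z_0}{q_m+q_{m-1}z_0}$ or as the limit of its convergents, and I would partition $\C$, away from a set of measure zero, into the terminal set $\Fd'$ and the continuation regions indexed by the admissible first partial quotient; on the interior of each region the passage to the next remainder is injective, so greedy extraction recovers the digits one at a time. What is left are the boundary coincidences, and the canonical conditions are tailored precisely to them: demanding $a_m\gqs 2$ when $z_0=0$ removes both the classical collapse $[\dots,a_m,1]=[\dots,a_m+1]$ and the spurious trailing zero, while excluding $z_0=e^{\pm 2\pi i/3}$ removes the ambiguity at the elliptic corners $\rho^2,\rho^4$ of the fundamental domain, where the identifications of $\partial\Fd'$ meet. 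Finally I would rule out any $z$ possessing both a finite and an infinite representation, using the real/non-real dichotomy (non-real $z$ reduces in finitely many steps, a real irrational has only the infinite expansion, a rational only a finite one) together with uniqueness of the classical continued fraction.

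The step I expect to be the main obstacle is this final boundary analysis. I must verify that the identifications induced on $\partial\Fd'$ by the maps $z\mapsto\pm z$ and $z\mapsto\pm 1/z$, together with the half-open conventions in the definition of $\Fd$, leave no residual overlap once $z_0=e^{\pm 2\pi i/3}$ is excluded and $a_m\gqs 2$ is imposed. This amounts to a careful accounting of the corner points $\rho,\rho^2,\rho^4,\rho^5$ and the arcs joining them, matched against the stabiliser structure of $\SL(2,\Z)$ at $i$ and $\rho$ (of orders $2$ and $3$). A secondary technical point is the convergence of the infinite expansions and the guaranteed halting of the non-real algorithm, both of which I would extract from the strict monotonicity of $|\Im|$ and the standard growth estimates for convergents with positive partial quotients.
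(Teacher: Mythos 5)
Your overall architecture (greedy reduction for existence, stabilizer-based boundary analysis for uniqueness) parallels the paper's, but both halves have a genuine gap at their core. On existence: your reduction normalizes remainders into the symmetric strip $-\tfrac12\lqs\Re(w)<\tfrac12$ and, when the remainder lands in the left lobe $|w+1|<1$, you propose to invert with a sign, ``$\pm 1/w$''. But the continued fractions of Definition \ref{srf} contain only $+$ reciprocals: replacing the remainder $w$ by $-w$ in $[a_0,\dots,a_j+w]$ changes the value of the expansion, and the $z\mapsto -z$ symmetry of $\Fd'$ only helps at the \emph{terminal} step, not mid-algorithm. Your own (correct) observation that $|w+1|<1 \iff \Re(1/w)<-\tfrac12$ shows the next partial quotient genuinely comes out negative in this lobe, so the nearest-integer scheme as stated does not produce representations satisfying Definition \ref{srf} (the paper itself notes in Section \ref{latt} that this Gauss-style variant yields coefficients of both signs). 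Repairing this requires the digit-conversion identities that turn ``minus'' steps into positive-digit patterns like $[k-1,1,n-1+z_0]$ -- exactly the content of the case analysis \e{kx} and \e{kc}, which the paper derives via the fan tiling in Theorem \ref{genc} and which your ``short case analysis'' does not supply. Note also that the paper's floor-based algorithm (remainder in $[0,1)$) avoids the sign problem by construction, but positivity of the later quotients is still nontrivial there and is what the fan machinery proves.

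On uniqueness: your region-partition argument only shows the greedy digits are determined on the \emph{interiors} of the continuation regions, and uniqueness can only fail on the boundary set -- which is measure zero but is precisely where the theorem has content, so ``away from a set of measure zero'' concedes the whole question. You correctly identify the right tool (the stabilizers of $i$ and $\rho^2$ in $\SL(2,\Z)$, of orders $2$ and $3$ mod $\pm I$), and this is indeed the paper's method in Theorem \ref{corn2}: write both representations as $LR$-words times $S^\alpha$ acting on a point $z_2\in\Fd$, deduce $N^{-1}M$ stabilizes $z_2$, dispose of the $\{\pm I,\pm S\}$ cases by positivity of the entries of $LR$-words (reducing to classical uniqueness), and then grind through the four $(\alpha,\beta)$ cases for $N^{-1}M=\pm ST$ to obtain exactly the six exceptional identities \e{cc1}--\e{cc6}. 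The decisive fact -- that \emph{every} exception involves $\rho^2$ or $\rho^4$ on at least one side, even though $\rho$ and $\rho^5$ also appear, so that excluding just those two corners (plus the $a_m\gqs 2$ normalization for $z_0=0$) restores uniqueness -- is the heart of the proof, and you explicitly defer it as ``the main obstacle.'' As written, then, the proposal sketches the correct strategy but leaves unproven precisely the two steps (positivity of the digits, and the corner case analysis) that constitute the paper's actual proofs of Theorems \ref{genc} and \ref{corn2}.
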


For a first example,
\begin{equation} \la{423}
  -\frac{101732}{28505}+\frac{i}{171030} = \left[-4, 2, 3, 7, 1, 5+\tfrac{1-3i}4 \right].
\end{equation}
We claim that these new continued fractions are  natural and useful,  based on their following properties:
\begin{itemize}
  \item For real numbers they are the usual continued fractions.
  \item The   algorithm to produce them is similar to the normal one,  enlarging the terminating set $\{0\}$.
   \item Each $z\in \C$ has a  unique  representation, with a sequence of positive integers after the first.
  \item These continued fractions directly produce a minimal basis for any two-dimensional lattice.
  \item For an integral binary quadratic form of any discriminant,  they may be used to find  equivalent reduced forms. The turning sequence leading to the reduced forms in the positive discriminant case is given by the usual continued fraction coefficients of a real zero of the form.  In the negative discriminant case, the turning sequence is given by the new continued fraction coefficients of a complex zero.
  \item These continued fractions can be used to give the unique representation of each element of $\PSL(2,\Z)$  in terms of generators of order $2$ and $3$.
  \item They allow a simple extension to $\C$ of Serret's theorem on equivalent numbers.
  \item They have an illustrative geometric interpretation that extends the cutting sequences of C. Series.
\end{itemize}
These features will all be covered in the following sections.
The algorithm that produces  these representations is described next. It is a version of the simple procedures used by Lagrange to reduce positive definite binary quadratic forms and by Gauss to reduce two-dimensional lattices. We will see that it is essentially the `Standard Gaussian Algorithm' of \cite{DFV}.

\begin{algo1}
Start with index $i=0$.
\begin{itemize}
  \item Let $m=\lfloor \Re(z)\rfloor$. If $z-m-\delta \in \Fd'$ for $\delta=0$ or $1$ then let $a_{i}=m+\delta$, $z_0=z-m-\delta$ and finish. (Choose $\delta=0$ if $\delta=0$, $1$ are both possible.) Otherwise  put $a_{i}=m$ and replace $z$ by $1/(z-m)$.
\end{itemize}
Increment $i$ and repeat. The output is $(a_0,a_1, \dots )$ along with $z_0$ if it terminates.
\end{algo1}

When $z\in \R$ then this reduces to the usual algorithm, as in \cite[Sect. 10.6]{hawr}.
The ambiguous case when $\delta=0$ and $1$ are both valid in the final step of this procedure can only happen  when $z-m = e^{\pm \pi i/3}$.  (It could also be avoided by removing the corners $e^{\pm 2\pi i/3}$ from $\Fd'$.) In this way, the output always corresponds to a canonical continued fraction representation.
We examine this algorithm in detail in the next sections. 

Continued fractions for real numbers have a long and rich history, as recounted in \cite{bre80} for example, with related research and applications being  currently pursued in many directions. Simple continued fractions for complex numbers  using coefficients $a_j$  in the ring $\Z+i \Z$ of Gaussian integers have been examined by many authors. This allows closer and closer approximations of complex numbers by Gaussian integer fractions, and the  approaches of Hurwitz and Schmidt are described in \cite[Chap. 5]{hen}. See also the work \cite{mar} in general imaginary quadratic fields.  Our continued fractions in Definition \ref{srf} cannot be directly used  to closely approximate nonreal complex numbers, though perhaps they may aid in the study of approximating reals and have wider uses.

\section{Hyperbolic fan geometry} \la{hyf}
The required operations of addition by an integer and taking a reciprocal correspond to the actions of 
\begin{equation} \la{tj}
 T :=\begin{pmatrix} 1 & 1 \\ 0 & 1 \end{pmatrix}, \quad  J := \begin{pmatrix} 0 & 1\\ 1 & 0 \end{pmatrix},
\end{equation}
so that $T^m x =x+m$ and $J x=1/x$.
Also
put
\begin{equation} \la{su}
  S := \begin{pmatrix} 0 & -1\\ 1 & 0 \end{pmatrix},  \quad  U := TS= \begin{pmatrix} 1&-1\\1&0 \end{pmatrix},
\end{equation}
and $\SL(2,\Z)$ is generated by any two of $S$, $T$ and  $U$. We have $S^2=-I=U^3$; see Section \ref{gen}.

The region $\Fd$ in \e{fdb} is
a fundamental domain,
meaning that for any $z\in \H$ the orbit $\SL(2,\Z) z$  intersects $\Fd$ exactly once.
The hourglass region
\begin{equation}\label{fdp}
\Fd':= \Fd \cup S\Fd \cup \{0\}\cup -(\Fd \cup S\Fd)
\end{equation}
from the introduction is mapped to itself under both $z \mapsto -z$ and $z \mapsto 1/z$ (for $z\neq 0$). It may be seen that
\begin{equation}\label{fdp2}
\text{closure}(\Fd') = \big\{ z  \in \C \, : \, -1/2\lqs \Re(z) \lqs 1/2, |z \pm 1 |\gqs 1 \big\}.
\end{equation}

The work in this section is based on \cite[Sect. 2]{OStop}, elaborating on its details.
The proof of Theorem \ref{genc} below uses the hyperbolic fans from \cite[pp. 28, 29]{con97}. They are not strictly necessary to show that every complex number has the desired expansion, but they provide more information as we will see in Corollary \ref{fgc}.
For relatively prime integers $p$ and $q$ with $q\gqs 0$, define the {\em fan} $F_{p/q}$ as follows:
\begin{equation}\label{fans}
F_{1/0}=F_{-1/0}:=\bigcup_{n \in \Z} T^n \Fd, \qquad F_{p/q}:= \begin{pmatrix} p & * \\ q & * \end{pmatrix} F_{1/0},
\end{equation}
where $(\begin{smallmatrix}p & * \\ q & *\end{smallmatrix})$ is completed to an element of $\SL(2,\Z)$ by B\'ezout's identity and $F_{p/q}$ is well-defined.  Each fan is inscribed by a Ford circle; this is the image of the horizontal line with imaginary part $1$ under the matrix in \e{fans}. A typical fan is shown in Figure \ref{fanf2}.
\SpecialCoor
\psset{griddots=5,subgriddiv=0,gridlabels=0pt}
\psset{xunit=4cm, yunit=4cm, runit=4cm}
\psset{linewidth=1pt}
\psset{dotsize=5pt 0,dotstyle=*}
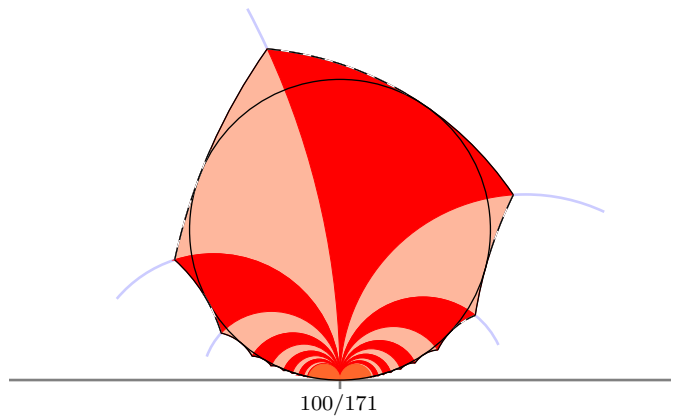
\begin{figure}[ht]
\centering
\begin{pspicture}(-1.1,-0.1)(1.1,1.3) 

\psset{arrowscale=1.1,arrowinset=0.1}
\newrgbcolor{light}{0.8 0.7 1.0}
\newrgbcolor{bmid}{0.6 0.5 1}
\newrgbcolor{bmid}{0.99609375 0.40234375 0.16796875}

\newrgbcolor{bb}{0.4 0.3 1}
\newrgbcolor{bb}{1 0 0}
\newrgbcolor{cx}{0.8 0.8 1.0}

\newrgbcolor{white2}{0.4 0.3 1}
\newrgbcolor{white2}{1 0 0}
\newrgbcolor{light}{0.99609375 0.71875 0.6171875}


\psarc[linecolor=cx](-2.63077,0){2.63077}{24.7559}{28}
\psarc[linecolor=cx](0.617329,0){0.617329}{65}{93.8332}

\psarc[linecolor=cx](0.276252,0){0.276252}{25}{51.1406}

\psarc[linecolor=cx](-0.420147,0){0.420147}{107.912}{140}
\psarc[linecolor=cx](-0.228304,0){0.228304}{136.849}{160}


\psline[linecolor=gray](-1.1,0)(1.1,0)
\psline[linecolor=gray](0.0,0.0)(0.0,-0.03)

\rput(0.0,-0.08){$_{100/171}$}

\psset{linewidth=0.1pt}

\pscustom[linecolor=white2,fillstyle=solid,fillcolor=bb]{
\psarc(-0.34288,0){1.10627}{33.8332}{84.7559}
\psarcn(-2.63077,0){2.63077}{24.7559}{0}
\psarcn(0.617329,0){0.617329}{180}{93.8332}
}

\pscustom[linecolor=white2,fillstyle=solid,fillcolor=light]{
\psarc(1.82966,0){1.39675}{153.833}{171.141}
\psarc(0.276252,0){0.276252}{51.1406}{180.}
\psarcn(0.617329,0){0.617329}{180.}{93.8332}
}

\pscustom[linecolor=white2,fillstyle=solid,fillcolor=bb]{
\psarc(0.532756,0){0.230636}{111.141}{154.259}

\psarc(0.17794,0){0.17794}{34.2586}{180.}

\psarcn(0.276252,0){0.276252}{180.}{51.1406}
}

\pscustom[linecolor=white2,fillstyle=solid,fillcolor=light]{
\psarc(0.332467,0){0.100445}{94.2586}{145.612}

\psarc(0.131236,0){0.131236}{25.6122}{180.}

\psarcn(0.17794,0){0.17794}{180.}{34.2586}
}

\pscustom[linecolor=white2,fillstyle=solid,fillcolor=bb]{
\psarc(0.245223,0){0.0568971}{85.6122}{140.413}

\psarc(0.103951,0){0.103951}{20.4134}{180.}

\psarcn(0.131236,0){0.131236}{180.}{25.6122}
}

\pscustom[linecolor=white2,fillstyle=solid,fillcolor=light]{
\psarc(0.195251,0){0.0367708}{80.4134}{136.956}

\psarc(0.0860594,0){0.0860594}{16.9561}{180.}

\psarcn(0.103951,0){0.103951}{180.}{20.4134}
}

\pscustom[linecolor=white2,fillstyle=solid,fillcolor=bb]{
\psarc(0.162563,0){0.025763}{76.9561}{134.495}

\psarc(0.0734221,0){0.0734221}{14.4949}{180.}

\psarcn(0.0860594,0){0.0860594}{180.}{16.9561}
}

\pscustom[linecolor=white2,fillstyle=solid,fillcolor=light]{
\psarc(0.139409,0){0.0190711}{74.4949}{132.655}

\psarc(0.064021,0){0.064021}{12.6551}{180.}

\psarcn(0.0734221,0){0.0734221}{180.}{14.4949}
}

\pscustom[linecolor=white2,fillstyle=solid,fillcolor=bb]{
\psarc(0.122106,0){0.014694}{72.6551}{131.228}

\psarc(0.0567541,0){0.0567541}{11.2284}{180.}

\psarcn(0.064021,0){0.064021}{180.}{12.6551}
}

\pscustom[linecolor=white2,fillstyle=solid,fillcolor=light]{
\psarc(0.108666,0){0.011672}{71.2284}{130.09}

\psarc(0.0509687,0){0.0509687}{10.09}{180.}

\psarcn(0.0567541,0){0.0567541}{180.}{11.2284}
}

\pscustom[linecolor=white2,fillstyle=solid,fillcolor=bb]{
\psarc(0.0979149,0){0.00949714}{70.09}{129.161}

\psarc(0.0462537,0){0.0462537}{9.1608}{180.}

\psarcn(0.0509687,0){0.0509687}{180.}{10.09}
}

\pscustom[linecolor=white2,fillstyle=solid,fillcolor=light]{
\psarc(1.31736,0){1.90906}{144.756}{167.912}

\psarcn(-0.420147,0){0.420147}{107.912}{0.}

\psarc(-2.63077,0){2.63077}{0.}{24.7559}
}

\pscustom[linecolor=white2,fillstyle=solid,fillcolor=bb]{
\psarc(-0.910446,0){0.538707}{16.8489}{47.912}

\psarcn(-0.420147,0){0.420147}{107.912}{0.}

\psarc(-0.228304,0){0.228304}{0.}{136.849}
}

\pscustom[linecolor=white2,fillstyle=solid,fillcolor=light]{
\psarc(-0.431347,0){0.160349}{29.6232}{76.8489}

\psarcn(-0.228304,0){0.228304}{136.849}{0.}

\psarc(-0.156737,0){0.156737}{0.}{149.623}
}

\pscustom[linecolor=white2,fillstyle=solid,fillcolor=bb]{
\psarc(-0.292478,0){0.0792611}{36.6444}{89.6232}

\psarcn(-0.156737,0){0.156737}{149.623}{0.}

\psarc(-0.11933,0){0.11933}{0.}{156.644}
}

\pscustom[linecolor=white2,fillstyle=solid,fillcolor=light]{
\psarc(-0.223372,0){0.0476266}{41.0535}{96.6444}

\psarcn(-0.11933,0){0.11933}{156.644}{0.}

\psarc(-0.096338,0){0.096338}{0.}{161.054}
}

\pscustom[linecolor=white2,fillstyle=solid,fillcolor=bb]{
\psarc(-0.181346,0){0.0318707}{44.0714}{101.054}

\psarcn(-0.096338,0){0.096338}{161.054}{0.}

\psarc(-0.0807747,0){0.0807747}{0.}{164.071}
}

\pscustom[linecolor=white2,fillstyle=solid,fillcolor=light]{
\psarc(-0.152892,0){0.0228535}{46.2639}{104.071}

\psarcn(-0.0807747,0){0.0807747}{164.071}{0.}

\psarc(-0.0695405,0){0.0695405}{0.}{166.264}
}

\pscustom[linecolor=white2,fillstyle=solid,fillcolor=bb]{
\psarc(-0.132275,0){0.0172008}{47.9278}{106.264}

\psarcn(-0.0695405,0){0.0695405}{166.264}{0.}

\psarc(-0.0610496,0){0.0610496}{0.}{167.928}
}

\pscustom[linecolor=white2,fillstyle=solid,fillcolor=light]{
\psarc(-0.116618,0){0.0134197}{49.2333}{107.928}

\psarcn(-0.0610496,0){0.0610496}{167.928}{0.}

\psarc(-0.0544066,0){0.0544066}{0.}{169.233}
}

\pscustom[linecolor=white2,fillstyle=solid,fillcolor=bb]{
\psarc(-0.104309,0){0.0107646}{50.2845}{109.233}

\psarcn(-0.0544066,0){0.0544066}{169.233}{0.}

\psarc(-0.0490674,0){0.0490674}{0.}{170.285}
}


\pscustom[linecolor=bmid,fillstyle=solid,fillcolor=bmid]{
\psarcn(-0.0526316,0){0.0526316}{169.583}{0}
\psarcn(0,0.5){0.5}{270}{257}
}

\pscustom[linecolor=bmid,fillstyle=solid,fillcolor=bmid]{
\psarc(0.0526316,0){0.0526316}{10.4174}{180}
\psarc(0,0.5){0.5}{270}{283}
}

\psset{linewidth=0.5pt}

\pscircle(0,0.5){0.5}

\psarc(-0.34288,0){1.10627}{33.8332}{84.7559}  

\psarc[linecolor=white](-0.34288,0){1.10627}{59.2945}{84.7559}
\psarc[linestyle=dashed](-0.34288,0){1.10627}{59.2945}{84.7559}

\psarc(1.82966,0){1.39675}{153.833}{171.141} 

\psarc[linecolor=white](1.82966,0){1.39675}{153.833}{162.487}
\psarc[linestyle=dashed](1.82966,0){1.39675}{153.833}{162.487}

\psarc(0.532756,0){0.230636}{111.141}{154.259} 

\psarc[linecolor=white](0.532756,0){0.230636}{111.141}{132.7}
\psarc[linestyle=dashed](0.532756,0){0.230636}{111.141}{132.7}

\psarc(1.31736,0){1.90906}{144.756}{167.912} 

\psarc[linecolor=white](1.31736,0){1.90906}{156.334}{167.912}
\psarc[linestyle=dashed](1.31736,0){1.90906}{156.334}{167.912}

\psarc(-0.910446,0){0.538707}{16.8489}{47.912} 

\psarc[linecolor=white](-0.910446,0){0.538707}{16.8489}{32.3804}
\psarc[linestyle=dashed](-0.910446,0){0.538707}{16.8489}{32.3804}

\psarc(-0.431347,0){0.160349}{29.6232}{76.8489} 
\psarc(-0.292478,0){0.0792611}{36.6444}{89.6232}
\psarc(-0.223372,0){0.0476266}{41.0535}{96.6444}
\psarc(-0.181346,0){0.0318707}{44.0714}{101.054}
\psarc(-0.152892,0){0.0228535}{46.2639}{104.071}
\psarc(-0.132275,0){0.0172008}{47.9278}{106.264}
\psarc(-0.116618,0){0.0134197}{49.2333}{107.928}

\psarc(0.332467,0){0.100445}{94.2586}{145.612} 
\psarc(0.245223,0){0.0568971}{85.6122}{140.413}
\psarc(0.195251,0){0.0367708}{80.4134}{136.956}
\psarc(0.162563,0){0.025763}{76.9561}{134.495}
\psarc(0.139409,0){0.0190711}{74.4949}{132.655}
\psarc(0.122106,0){0.014694}{72.6551}{131.228}
\psarc(0.108666,0){0.011672}{71.2284}{130.09}


\end{pspicture}
\caption{The fan $F_{100/171}$ with its inscribed Ford circle of diameter  $1/171^2$} 
\label{fanf2}
\end{figure}
These fans tile $\H$, as seen in Figure \ref{fant}, with all points in exactly one fan, except that points in the orbit of $i$ (middle of edge boundaries) are in two and those in the orbit of $e^{\pi i/3}$ (corners) are in three.

\SpecialCoor
\psset{griddots=5,subgriddiv=0,gridlabels=0pt}
\psset{xunit=4cm, yunit=4cm, runit=4cm}
\psset{linewidth=1pt}
\psset{dotsize=5pt 0,dotstyle=*}
\begin{figure}[ht]
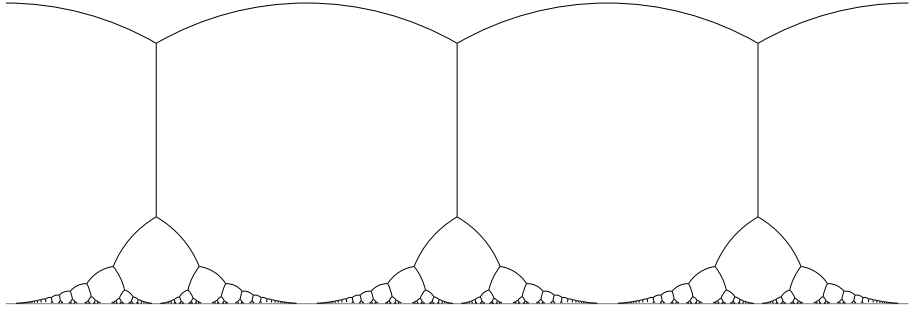

\centering

\caption{Tiling $\H$ with fans}
\label{fant}
\end{figure}

It is convenient to define the alternative fundamental domain
\begin{equation}\label{gfun}
\Gd:= \big\{ z  \in \H \, : \, -\tfrac 12 < \Re(z) \lqs \tfrac 12, |z|\gqs 1 \text{ and } |z|=1 \implies \Re(z) \gqs 0\big\},
\end{equation}
with boundary on the right. Then $-\Fd = \overline \Gd$, where  the bar indicates conjugation.
Hence $\Fd'$ has another expression as $\Fd \cup S\Fd \cup \{0\}\cup \overline \Gd \cup S\overline \Gd$.
The similar fans
$$
G_{1/0}=G_{-1/0}:=\bigcup_{n \in \Z} T^n \Gd, \qquad G_{p/q}:= \begin{pmatrix} p & * \\ q & * \end{pmatrix} G_{1/0},
$$
may also be defined
where $F_{p/q}$ and $G_{p/q}$ only differ by having complementary boundaries.

\begin{lemma} \la{flip}
We have
\begin{equation*}
  z \in F_{p/q} \iff 1/z  \in \overline G_{q/p}.
\end{equation*}
\end{lemma}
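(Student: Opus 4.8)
The plan is to recast the biconditional as an equality of sets and then reduce everything to the base fan $F_{1/0}$ by pure matrix algebra. Since $1/z = Jz$ and $z \mapsto Jz$ is a bijection from $\H$ onto the lower half plane, the claim $z \in F_{p/q} \iff 1/z \in \overline{G_{q/p}}$ is equivalent to the single set identity $J F_{p/q} = \overline{G_{q/p}}$, and this is what I would prove directly: it yields both implications at once.

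Two preliminary facts feed the argument. First I record the reflection identity $\overline{G_{1/0}} = -F_{1/0}$. Applying conjugation term by term to $G_{1/0} = \bigcup_n T^n \Gd$ and using that conjugation commutes with the real translation $T^n$ gives $\overline{G_{1/0}} = \bigcup_n T^n \overline{\Gd}$; since $\overline{\Gd} = -\Fd$ is already established, the relation $T^n(-\Fd) = -T^{-n}\Fd$ reindexes the union into $-\bigcup_n T^n \Fd = -F_{1/0}$. Second, I use that $F_{1/0}$ and $G_{1/0}$ are invariant under $T$, so an $\SL(2,\Z)$ matrix acts on them through its first column only, and the resulting fan depends only on the rational value of that column (because $-I$ acts trivially). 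In particular $N G_{1/0}$ equals the fan indexed by the reduced fraction read off from the first column of $N$, regardless of the signs of its entries.

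\emph{Key computation.} Write $F_{p/q} = M F_{1/0}$ with $M = \begin{pmatrix} p & r \\ q & s \end{pmatrix} \in \SL(2,\Z)$, so that $J F_{p/q} = JM F_{1/0}$. Set $D := \begin{pmatrix} -1 & 0 \\ 0 & 1\end{pmatrix}$, which acts as $z \mapsto -z$, and $N := \begin{pmatrix} -q & s \\ -p & r\end{pmatrix}$. A one-line multiplication gives $JM = \begin{pmatrix} q & s \\ p & r\end{pmatrix} = ND$, while $\det N = ps - qr = 1$, so $N \in \SL(2,\Z)$ with first column $(-q,-p)$, i.e. $N$ represents the rational $q/p$. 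Then I chain the set equalities
\[
J F_{p/q} = JM F_{1/0} = N D F_{1/0} = N(-F_{1/0}) = N \overline{G_{1/0}} = \overline{N G_{1/0}} = \overline{G_{q/p}},
\]
using $D F_{1/0} = -F_{1/0}$, the reflection identity, the reality of $N$ (so $N\overline{w} = \overline{Nw}$), and finally $N G_{1/0} = G_{q/p}$ from the second preliminary. This is exactly the asserted identity, and the degenerate indices $q = 0$ and $p = 0$ are handled verbatim by the same computation.

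\emph{Main obstacle.} The genuine subtlety is the boundary bookkeeping: $F_{p/q}$ and $G_{p/q}$ carry complementary boundary arcs and segments, and one must be sure that the composite operation ``reciprocate, then conjugate'' swaps the included and excluded boundary pieces precisely. I expect this to be the only delicate point, but it is entirely absorbed into the exact set identity $\overline{\Gd} = -\Fd$ already in hand; by manipulating honest set equalities rather than interiors, the boundaries take care of themselves and no case analysis along $|z\pm 1| = 1$ or $\Re(z) = \pm 1/2$ is needed. A secondary, purely cosmetic issue is the sign of the first column of $N$, which I dispatch with the observation that fans depend only on the rational value of that column.
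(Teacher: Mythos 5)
Your proof is correct and is essentially the paper's own argument in lightly repackaged form: your matrix identity $JM = ND$ with $D = \left(\begin{smallmatrix}-1 & 0\\ 0 & 1\end{smallmatrix}\right)$ is exactly the paper's \e{jm}, where $D$ appears factored as $(-I)JS$, and your reflection identity $\overline{G_{1/0}} = -F_{1/0}$ is precisely the union over $n$ of the paper's \e{jst}. The only deviation is cosmetic: you perform the bookkeeping once at the level of whole fans rather than part by part over the pieces $T^n\Fd$, with the boundary conventions absorbed, just as in the paper, into the exact set identity $\overline{\Gd} = -\Fd$.
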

\begin{proof}
Note that
\begin{equation} \la{jm}
  J \begin{pmatrix} p & * \\ q & * \end{pmatrix} = \begin{pmatrix} q & * \\ p & * \end{pmatrix}(-I)J S
\end{equation}
with $(\begin{smallmatrix}p & * \\ q & *\end{smallmatrix})$ and $(\begin{smallmatrix}q & * \\ p & *\end{smallmatrix})$ in $\SL(2,\Z)$. Also
\begin{equation} \la{jst}
  J S T^n \Fd = - T^n \Fd = T^{-n} \overline\Gd.
\end{equation}
Therefore, with the matrices from \e{jm},
\begin{equation*}
   J \begin{pmatrix} p & * \\ q & * \end{pmatrix} T^n \Fd = \begin{pmatrix} q & * \\ p & * \end{pmatrix} J S  T^n \Fd
   = \begin{pmatrix} q & * \\ p & * \end{pmatrix} T^{-n} \overline\Gd
\end{equation*}
and the lemma follows.
\end{proof}

\SpecialCoor
\psset{griddots=5,subgriddiv=0,gridlabels=0pt}
\psset{xunit=4cm, yunit=4cm, runit=4cm}
\psset{linewidth=1pt}
\psset{dotsize=5pt 0,dotstyle=*}
\begin{figure}[ht]
\centering
\begin{pspicture}(-1.1,-0.1)(1.1,1) 

\psset{arrowscale=1.1,arrowinset=0.1}
\newrgbcolor{light}{0.8 0.7 1.0}
\newrgbcolor{bmid}{0.6 0.5 1}
\newrgbcolor{bmid}{0.99609375 0.40234375 0.16796875}

\newrgbcolor{bb}{0.4 0.3 1}
\newrgbcolor{bb}{1 0 0}
\newrgbcolor{cx}{0.8 0.8 1.0}

\newrgbcolor{white2}{0.4 0.3 1}
\newrgbcolor{white2}{1 0 0}
\newrgbcolor{light}{0.99609375 0.71875 0.6171875}


\psarc[linecolor=cx](-1,0){1}{60}{90}
\psarc[linecolor=cx](1,0){1}{90}{120}

\psarc[linecolor=cx](-0.3333,0){0.333}{120}{158.213}
\psarc[linecolor=cx](0.3333,0){0.333}{21.7868}{60}

\psarc[linecolor=cx](0.625,0){0.125}{32.2042}{81.7868}  
\psarc[linecolor=cx](-0.625,0){0.125}{98.2132}{147.796}

\psarcn[linecolor=cx](0.733333,0){0.0666667}{92.2042}{38.2132}  
\psarc[linecolor=cx](-0.733333,0){0.0666667}{87.7958}{141.787}


\psarc[linecolor=cx](0.2,0){0.2}{13}{38.2132}
\psarc[linecolor=cx](-0.2,0){0.2}{141.787}{167}
\psarc[linecolor=cx](-0.8,0){0.2}{13}{38.2132}
\psarc[linecolor=cx](0.8,0){0.2}{141.787}{167}


\psline[linecolor=gray](-1.1,0)(1.1,0)
\psline[linecolor=gray](-0.5,0.03)(-0.5,-0.03)
\psline[linecolor=gray](0.5,0.03)(0.5,-0.03)

\rput(-0.5,-0.08){$_{k-1/2}$}
\rput(0.5,-0.08){$_{k+1/2}$}

\psset{linewidth=0.1pt}

\pscustom[linecolor=white2,fillstyle=solid,fillcolor=bb]{
\psarc(0,0){1}{60}{120}
\psarcn(-1,0){1}{60}{0}
\psarcn(1,0){1}{180}{120}
}

\pscustom[linecolor=white2,fillstyle=solid,fillcolor=light]{
\psline(-0.5,0.2887)(-0.5,0.866)
\psarcn(-1,0){1}{60}{0}
\psarc(-0.333,0){0.333}{0}{120}
}

\pscustom[linecolor=white2,fillstyle=solid,fillcolor=light]{
\psline(0.5,0.2887)(0.5,0.866)
\psarc(1,0){1}{120}{180}
\psarcn(0.333,0){0.333}{180}{60}
}

\pscustom[linecolor=white2,fillstyle=solid,fillcolor=bb]{
\psarc(-0.333,0){0.333}{0}{120}
\psarcn(-0.6667,0){0.333}{60}{21.7868}
\psarcn(-0.2,0){0.2}{141.787}{0}
}

\pscustom[linecolor=white2,fillstyle=solid,fillcolor=bb]{
\psarcn(0.333,0){0.333}{180}{60}
\psarc(0.6667,0){0.333}{120}{158.213}
\psarc(0.2,0){0.2}{38.2132}{180}
}

\pscustom[linecolor=white2,fillstyle=solid,fillcolor=light]{
\psarc(-0.2,0){0.2}{0}{141.787}
\psarcn(-0.375,0){0.125}{81.7868}{32.2042}
\psarcn(-0.142857,0){0.142857}{152.204}{0}
}

\pscustom[linecolor=white2,fillstyle=solid,fillcolor=light]{
\psarcn(0.2,0){0.2}{180}{38.2132}
\psarc(0.375,0){0.125}{98.2132}{147.796}
\psarc(0.142857,0){0.142857}{27.7958}{180}
}

\pscustom[linecolor=white2,fillstyle=solid,fillcolor=bb]{
\psarc(-0.142857,0){0.142857}{0}{152.204}
\psarcn(-0.266667,0){0.0666667}{92.2042}{38.2132}
\psarcn(-0.111111,0){0.111111}{158.213}{0}
}

\pscustom[linecolor=white2,fillstyle=solid,fillcolor=bb]{
\psarcn(0.142857,0){0.142857}{180}{27.7958}
\psarc(0.266667,0){0.0666667}{87.7958}{141.787}
\psarc(0.111111,0){0.111111}{21.7868}{180}
}

\pscustom[linecolor=white2,fillstyle=solid,fillcolor=light]{
\psarc(-0.111111,0){0.111111}{0}{158.213}
\psarcn(-0.208333,0){0.0416667}{98.2132}{42.1034}
\psarcn(-0.0909091,0){0.0909091}{162.103}{0}
}

\pscustom[linecolor=white2,fillstyle=solid,fillcolor=light]{
\psarcn(0.111111,0){0.111111}{180}{21.7868}
\psarc(0.208333,0){0.0416667}{81.7868}{137.897}
\psarc(0.0909091,0){0.0909091}{17.8966}{180}
}

\pscustom[linecolor=white2,fillstyle=solid,fillcolor=bb]{
\psarc(-0.0909091,0){0.0909091}{0}{162.103}
\psarcn(-0.171429,0){0.0285714}{102.103}{44.8218}
\psarcn(-0.0769231,0){0.0769231}{164.822}{0}
}

\pscustom[linecolor=white2,fillstyle=solid,fillcolor=bb]{
\psarcn(0.0909091,0){0.0909091}{180}{17.8966}
\psarc(0.171429,0){0.0285714}{77.8966}{135.178}
\psarc(0.0769231,0){0.0769231}{15.1782}{180}
}

\pscustom[linecolor=white2,fillstyle=solid,fillcolor=light]{
\psarc(-0.0769231,0){0.0769231}{0}{164.822}
\psarcn(-0.145833,0){0.0208333}{104.822}{46.8264}
\psarcn(-0.0666667,0){0.0666667}{166.826}{0}
}

\pscustom[linecolor=white2,fillstyle=solid,fillcolor=light]{
\psarcn(0.0769231,0){0.0769231}{180}{15.1782}
\psarc(0.145833,0){0.0208333}{75.1782}{133.174}
\psarc(0.0666667,0){0.0666667}{13.1736}{180}
}

\pscustom[linecolor=white2,fillstyle=solid,fillcolor=bb]{
\psarc(-0.0666667,0){0.0666667}{0}{166.826}
\psarcn(-0.126984,0){0.015873}{106.826}{48.3649}
\psarcn(-0.0588235,0){0.0588235}{168.365}{0}
}

\pscustom[linecolor=white2,fillstyle=solid,fillcolor=bb]{
\psarcn(0.0666667,0){0.0666667}{180}{13.1736}
\psarc(0.126984,0){0.015873}{73.1736}{131.635}
\psarc(0.0588235,0){0.0588235}{11.6351}{180}
}

\pscustom[linecolor=white2,fillstyle=solid,fillcolor=light]{
\psarc(-0.0588235,0){0.0588235}{0}{168.365}
\psarcn(-0.1125,0){0.0125}{108.365}{49.5826}
\psarcn(-0.0526316,0){0.0526316}{169.583}{0}
}

\pscustom[linecolor=white2,fillstyle=solid,fillcolor=light]{
\psarcn(0.0588235,0){0.0588235}{180}{11.6351}
\psarc(0.1125,0){0.0125}{71.6351}{130.417}
\psarc(0.0526316,0){0.0526316}{10.4174}{180}
}

\pscustom[linecolor=bmid,fillstyle=solid,fillcolor=bmid]{
\psarcn(-0.0526316,0){0.0526316}{169.583}{0}
\psarcn(0,0.5){0.5}{270}{257}
}

\pscustom[linecolor=bmid,fillstyle=solid,fillcolor=bmid]{
\psarc(0.0526316,0){0.0526316}{10.4174}{180}
\psarc(0,0.5){0.5}{270}{283}
}

\psset{linewidth=0.5pt}

\pscircle(0,0.5){0.5}

\psarc(0,0){1}{60}{90} 
\psarc[linecolor=white](0,0){1}{90}{120}
\psarc[linestyle=dashed,](0,0){1}{90}{120} 

\psline[linecolor=white](-0.5,0.2887)(-0.5,0.5)
\psline[linestyle=dashed](-0.5,0.2887)(-0.5,0.5) 
\psline(-0.5,0.57735)(-0.5,0.866)
\psline(0.5,0.2887)(0.5,0.57735)
\psline[linecolor=white](0.5,0.5)(0.5,0.866)
\psline[linestyle=dashed](0.5,0.5)(0.5,0.866)

\psarc[linecolor=white](-0.6667,0){0.333}{21.7868}{40.8934}
\psarc[linestyle=dashed,dash=2pt 2pt](-0.6667,0){0.333}{21.7868}{40.8934}  
\psarc(-0.6667,0){0.333}{40.8934}{60}
\psarc[linecolor=white](0.6667,0){0.333}{120}{139.106}
\psarc[linestyle=dashed,dash=2pt 2pt](0.6667,0){0.333}{120}{139.106}
\psarc(0.6667,0){0.333}{139.106}{158.213}

\psarc(-0.375,0){0.125}{32.2042}{81.7868}  
\psarc(0.375,0){0.125}{98.2132}{147.796}

\psarcn(-0.266667,0){0.0666667}{92.2042}{38.2132}  
\psarc(0.266667,0){0.0666667}{87.7958}{141.787}

\psarcn(-0.208333,0){0.0416667}{98.2132}{42.1034}  
\psarc(0.208333,0){0.0416667}{81.7868}{137.897}

\psarcn(-0.171429,0){0.0285714}{102.103}{44.8218}  
\psarc(0.171429,0){0.0285714}{77.8966}{135.178}

\psarcn(-0.145833,0){0.0208333}{104.822}{46.8264}  
\psarc(0.145833,0){0.0208333}{75.1782}{133.174}

\psarcn(-0.126984,0){0.015873}{106.826}{48.3649}   
\psarc(0.126984,0){0.015873}{73.1736}{131.635}

\psarcn(-0.1125,0){0.0125}{108.365}{49.5826}   
\psarc(0.1125,0){0.0125}{71.6351}{130.417}

\psarc(0,0.5){0.5}{257}{283}

\rput(0,0.75){$n=0$}
\rput(-0.3,0.48){$_{n=1}$}
\rput(0.3,0.48){$_{n=-1}$}
\rput(-0.3,0.25){$_{2}$}
\rput(0.3,0.25){$_{-2}$}

\end{pspicture}
\caption{The fan $F_{k/1}$ with its inscribed Ford circle of diameter $1$}
\label{fanf}
\end{figure}
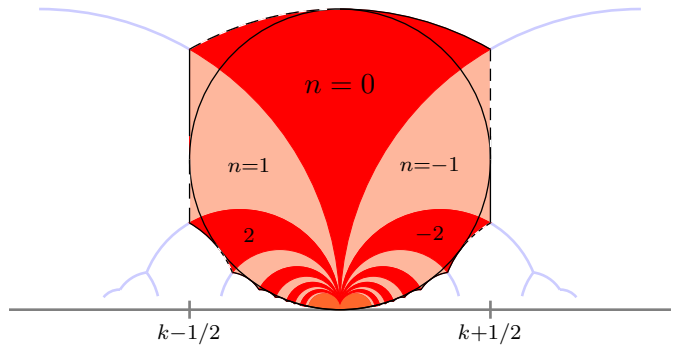

\begin{theorem} \label{genc}
The  continued fraction algorithm for $z\in \C$ produces the usual infinite continued fraction expansion if $z$ is an irrational real. Otherwise, the algorithm terminates, producing the integers $(a_0, a_1, \dots, a_r)$ with  $a_j$ positive for $j \gqs 1$, and $z_0 \in \Fd'$. Then 
\begin{equation*}
  z =  [ a_0,a_1, \dots, a_{r-1}, a_r+ z_0 ],
\end{equation*}
and this is the standard continued fraction when $z$ is rational.
\end{theorem}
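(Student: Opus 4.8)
The plan is to separate the real and non-real cases and to measure the algorithm's progress by the denominator of the fan containing the current iterate.

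\emph{Real case.} Since $\Fd' \cap \R = \{0\}$, the terminating test $z - m - \delta \in \Fd'$ can succeed for real $z$ only when $z - m \in \{0,1\}$, that is, when $z-m$ is an integer. Hence for real $z$ the procedure never exercises a genuinely complex feature and coincides step for step with the classical algorithm: it runs forever when $z$ is irrational, yielding the usual infinite expansion, and halts with $z_0 = 0$ exactly when $z$ is rational, yielding the usual finite expansion. This settles the first and last assertions.

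\emph{Non-real case, termination.} Write the iterates as $z=z^{(0)}, z^{(1)}, \dots$, where $z^{(i+1)} = 1/(z^{(i)}-m_i)$ at each reciprocation. Translation preserves the half-plane while inversion interchanges $\H$ with its conjugate, so the $z^{(i)}$ alternate sides; I therefore locate each in a fan, using the $F_{p/q}$ for iterates in $\H$ and the conjugate fans $\overline{G}_{p/q}$ for those below. By the tiling of Figures~\ref{fant} and \ref{fanf} each $z^{(i)}$ lies in a unique fan, of denominator $q_i \geq 0$, say. Because $T^{-m} F_{p/q} = F_{(p-mq)/q}$, subtracting $m_i = \lfloor \Re z^{(i)} \rfloor$ preserves the denominator and, using $\Re(z^{(i)}-m_i)\in[0,1)$ together with the localization of the fans of denominator $q_i \geq 1$, normalizes the numerator into $[0,q_i)$. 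Lemma~\ref{flip} (and its inverse, obtained by replacing $z$ with $1/z$) then places $z^{(i+1)}$ in a fan whose denominator is that normalized numerator, so $q_{i+1} < q_i$ whenever $q_i \geq 1$. The nonnegative integers $q_i$ thus strictly decrease at every reciprocation. Since $\Fd'$ meets only the fans of denominator at most $1$, namely $\Fd\subset F_{1/0}$ and $S\Fd\subset F_{0/1}$ together with their conjugates $\overline{G}_{1/0}$ and $\overline{G}_{0/1}$, the test must fail while $q_i \geq 2$; the algorithm therefore keeps reciprocating and reducing $q_i$ until $q_i \leq 1$, and once the base fan $q_i=0$ is reached we have $z^{(i)}-m_i \in \Fd \subset \Fd'$ and the algorithm halts. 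Hence it terminates after finitely many steps with $z_0 \in \Fd'$.

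\emph{Positivity and reconstruction.} When a reciprocation occurs at step $i$ the point $z^{(i)}-m_i$ fails the test, so it lies strictly inside one of the disks $|w\mp1|<1$ bounding $\Fd'$; a direct estimate then gives $|z^{(i)}-m_i|<1$ and $\Re(z^{(i+1)})>1/2$. If $\Re(z^{(i+1)})\geq 1$ the next integer part is already $\geq 1$, while if $\Re(z^{(i+1)})\in(1/2,1)$ one checks $z^{(i+1)}-1\in\Fd'$, forcing a halt with $\delta=1$ and $a_{i+1}=1$; either way $a_{i+1}\geq 1$, so every $a_j$ with $j\geq 1$ is positive. Unwinding the relations $z^{(i)} = a_i + 1/z^{(i+1)}$ at the reciprocations and $z^{(r)} = a_r + z_0$ at the halt then yields $z = [a_0,\dots,a_{r-1},a_r+z_0]$, with the standard finite expansion recovered when $z_0=0$. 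The main obstacle is the termination step: making the fan bookkeeping precise, in particular verifying that $m_i=\lfloor\Re z^{(i)}\rfloor$ normalizes the numerator correctly across the $\H$/lower-half-plane alternation so that the denominator provably drops at each inversion, and that $\Fd'$ is met exactly at the fans of denominator at most $1$.
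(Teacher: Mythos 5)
Your proposal follows the same skeleton as the paper's proof: locate each iterate in the fan tiling and use Lemma \ref{flip} to drive the fan's denominator down. That is exactly how the paper handles $z \in F_{p/q}$ with $q \gqs 2$ (there $m=\lfloor \Re z\rfloor = \lfloor p/q\rfloor$, so $z-m \in F_{p'/q}$ with $0<p'<q$ and $1/(z-m)\in \overline G_{q/p'}$). The genuine gap is your endgame. The claims that $m_i$ ``normalizes the numerator into $[0,q_i)$'' and that ``$q_{i+1}<q_i$ whenever $q_i\gqs 1$'' are false at $q_i=1$: the fan $F_{k/1}$ spans real parts $(k-\tfrac12,\,k+\tfrac12)$, straddling the line $\Re z = k$, so $\lfloor \Re z\rfloor$ can equal $k-1$ and the normalized numerator is then $1=q_i$. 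Concretely, for $z$ in the $n$th part $T^kST^n\Fd$ of $F_{k/1}$ with $n\gqs 2$, the halting test fails, $z-(k-1)\in F_{1/1}$, and the reciprocation lands in $TST^{1-n}\overline\Gd\subset \overline G_{1/1}$: the denominator is $1$ again. This is precisely the paper's case \e{kx1}, $z=[k-1,1,n-1+z_0]$, whose denominator sequence is $1,1,0$. So your descent proves termination only down to $q_i\lqs 1$; halting from the denominator-one fans, together with the values of the final one or two coefficients, requires exactly the part-by-part analysis of $F_{k/1}$ and $\overline G_{k/1}$ that the paper carries out in \e{kx} and \e{kc} and that you omitted. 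You flag this as the ``main obstacle,'' but it is not a verification detail: the stated inductive claim is wrong, and the repair is the omitted case analysis.

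Your positivity paragraph is essentially sound and nearly self-contained: a failed test does force $|z^{(i)}-m_i|<1$, hence $|z^{(i+1)}|>1$ and $\Re(z^{(i+1)})>\tfrac12$, and with $|z^{(i+1)}|>1$ the case $\Re(z^{(i+1)})\in(\tfrac12,1)$ does place $z^{(i+1)}-1$ in the interior of the closure of $\Fd'$, hence in $\Fd'$ --- though when $z^{(i)}-m_i$ sits on one of the excluded dashed boundary arcs of $\Fd'$ in Figure \ref{funds} one gets $\Re(z^{(i+1)})=\tfrac12$ exactly and must fall back on the $\delta=1$ branch, so the boundary conventions still need checking. But this only constrains what happens \emph{after} a reciprocation; it cannot substitute for the missing termination step. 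Two smaller inaccuracies: iterates on fan boundaries lie in two or three fans, not a unique one (this matters exactly at the corners the paper treats in \e{kc}); and halting can also occur inside the denominator-one fans $F_{0/1}$ and $\overline G_{0/1}$ (via $S\Fd$ and $S\overline\Gd$), not only ``once the base fan $q_i=0$ is reached.''
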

\begin{proof} 
As already noted, the algorithm reduces to the usual one  when $z\in \R$, giving the familiar finite and infinite representations; see \cite[Thms. 161, 170]{hawr}. For $z$ outside of $\R$ we next consider the different parts of the fan tiling in Figure \ref{fant}. 

If $z\in k+\Fd'$ for $k\in \Z$ then the algorithm almost always produces $z=[k+z_0]$. The exceptions have $z=k+e^{\pm \pi i/3}$ so that, with our convention in the ambiguous cases, the algorithm gives $z=[k+e^{\pm \pi i/3}]$.
Next suppose that $z \in F_{k/1}$.  We call $T^k S T^n \Fd$ the $n$th part of this fan, as shown in Figure \ref{fanf}, and denote the corners by
\begin{equation*}
  \alpha_{n,k} := T^k S T^n e^{\pi i/3} = k- 1/(n+e^{\pi i/3}). 
\end{equation*}
For $z$ in the $n$th part of $F_{k/1}$ and not  a corner, we claim that the algorithm gives
\begin{subequations} \la{kx}
\begin{alignat}{2}
  z & = [ k-1,1,n-1+z_0 ] \quad &&\text{for \quad $n\gqs 2$}, \la{kx1}\\
  z & = [ k-1,1+z_0 ] \quad &&\text{for  \quad $n=1$},  \la{kx2}\\
  z & = [ k+z_0 ] \quad &&\text{for  \quad $n=0$},  \la{kx3}\\
  z & = [ k,|n|+z_0 ] \quad &&\text{for  \quad $n\lqs -1$},  \la{kx4}
\end{alignat}
\end{subequations}
with some  $z_0 \in \Fd'-\{0\}$ in each case. This is clear for $n=0$, as already seen. If $n\lqs -1$ then $m=k$ and $z-m \in S T^n \Fd$ gets sent to $1/(z-m)\in J S T^n \Fd =  T^{-n} \overline\Gd$ by \e{jst}. The relation \e{kx4} now follows.
 If $n\gqs 1$ then $m=k-1$ and $z-m \in T S T^n \Fd$ gets sent to $1/(z-m) \in J T S T^n \Fd$. We have
\begin{equation*}
  (J T) S T^n \Fd = (T S T J) S T^n \Fd = T S T (J S T^n \Fd) = T S T^{1-n} \overline\Gd
\end{equation*}
by \e{jst}. If $n=1$ we then obtain \e{kx2} as $S \overline\Gd \subseteq \Fd'$. Otherwise, for $n \gqs 2$ the algorithm applies again to    $z' \in T S T^{1-n} \overline\Gd$ which is  the $(1-n)$th part of  $\overline G_{1/1}$. Now $m=1$ and $1/(z'-m)$ is in
\begin{equation*}
  J S T^{1-n} \overline\Gd = - T^{1-n} \overline\Gd = T^{n-1} \Fd
\end{equation*}
giving \e{kx1}. The claim is proven.

The corner $\alpha_{n-1,k}$ is in the $n$th part of $F_{k/1}$ but only agrees with \e{kx} above for $n\lqs 0$, because of our ambiguous case convention. Explicitly, the algorithm produces
\begin{equation} \la{kc}
\begin{aligned}
  \alpha_{n,k} & = [ k-1,1,n-1+e^{\pi i/3} ] \quad &&\text{for \quad $n\gqs 2$},  \\
   \alpha_{n,k} & = [ k-1,1 +e^{-\pi i/3} ] \quad &&\text{for \quad $n= 1$},  \\
   \alpha_{n,k} & = [ k-1+e^{\pi i/3} ] \quad &&\text{for  \quad $n=0$},  \\
   \alpha_{n,k} & = [ k+e^{\pi i/3} ] \quad &&\text{for  \quad $n=-1$},  \\
   \alpha_{n,k} & = [ k,|n|-1+e^{-\pi i/3} ] \quad &&\text{for  \quad $n\lqs -2$}.
\end{aligned}
\end{equation}

Next suppose that $z$ is below $\R$,  in the $n$th part  $T^k S T^n \overline\Gd$ of $\overline G_{k/1}$ and not a corner.
The above arguments, with $\Fd$ and $\overline\Gd$ swapped, demonstrate that 
\e{kx} holds for these $z$ values too. The corners of $\overline G_{k/1}$  are  the conjugates of $\alpha_{n,k}$ and applying the algorithm to these produces \e{kc} with both sides conjugated. Our work  has verified the theorem for  $z$ in all the regions covered so far -- the expansion coefficients are integers that are positive  after the first.

Suppose the algorithm is applied to an initial $z$ in a fan $F_{p/q}$ with $q\gqs 2$, such as the one in Figure \ref{fanf2}. Then $m=\lfloor \Re(z)\rfloor$ also equals $\lfloor p/q\rfloor$ so that $z-m \in F_{p'/q}$ for $0<p'<q$. We have $z'=1/(z-m) \in \overline G_{q/p'}$ by Lemma \ref{flip} for $q/p'>1$. Similarly, with initial $z$ in $\overline G_{p/q}$ we obtain $z' \in F_{q/p'}$. Thus $z=m+1/z'$ and while the points remain in
$F_{a/b}$, $\overline G_{a/b}$ for $b\gqs 2$, the algorithm cannot terminate since $\Fd'-\{0\} \subseteq F_{1/0}\cup F_{0/1} \cup \overline G_{0/1} \cup  \overline G_{1/0}$. Following the usual continued fraction of $p/q$, we eventually arrive at
\begin{equation*}
  z= [ a_0,a_1, \dots, a_{v}, z' ],
\end{equation*}
for some $z'$ in $F_{k/1}$ or $\overline G_{k/1}$ with $k\gqs 2$. The final expansion coefficients are now given by  \e{kx} or by \e{kc} and their conjugates if $z$ is a corner. These last coefficients are positive since $k\gqs 2$.
\end{proof}

From the details of the proof we also obtain:
\begin{cor} \la{fgc}
Suppose $z \in F_{p/q}$ or $\overline G_{p/q}$ for $q\gqs 1$. Let $p/q=[a_0,a_1, \dots, a_m]$ 
be canonical. Then the canonical continued fraction for $z$ must equal of one of these:
\begin{equation} \la{pqx}
\begin{aligned}
  & [a_0, \dots, a_{m-1}, a_m-1,  1, a_{m+2}+z_0],\\
  & [a_0, \dots, a_{m-1}, a_m-1,  1+z_0],\\
  & [a_0, \dots, a_{m-1}, a_m-1+z_0],  \\
  & [a_0, \dots, a_{m-1}, a_m+z_0],  \\
  & [a_0, \dots, a_{m-1}, a_m, \, a_{m+1}+z_0].
\end{aligned}
\end{equation}
\end{cor}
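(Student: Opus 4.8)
The plan is to read this off as a bookkeeping consequence of the proof of Theorem~\ref{genc}, the only real task being to count exactly how many Euclidean steps the algorithm takes before it deposits the running point into a base fan $F_{k/1}$ or $\overline{G}_{k/1}$.

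First I would retrace the final paragraph of the proof of Theorem~\ref{genc}. Applied to $z\in F_{p/q}$, the algorithm removes $a_0=\lfloor p/q\rfloor$ and, by Lemma~\ref{flip}, sends $z$ into $\overline{G}_{\theta_1}$ with $\theta_1=1/(p/q-a_0)=[a_1,\dots,a_m]$. Iterating, I would show that after removing $a_0,\dots,a_{i-1}$ the running point $z^{(i)}$ lies in a fan $H_{\theta_i}$ (with $H$ alternating between $F$ and $\overline{G}$) whose denominator equals that of $\theta_i=[a_i,\dots,a_m]$ in lowest terms. This denominator is $1$ exactly when $\theta_i$ is an integer, that is when $i=m$. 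Here canonicity is what makes the count clean: since $p/q=[a_0,\dots,a_m]$ is canonical we have $a_m\gqs 2$ for $m\gqs 1$, so $\theta_{m-1}=a_{m-1}+1/a_m$ has floor $a_{m-1}$, and the Euclidean phase really does produce $a_0,\dots,a_{m-1}$ and then place $z^{(m)}=1/(z^{(m-1)}-a_{m-1})$ into a base fan with $k=a_m\gqs 2$. (When $m=0$ we have $z\in F_{a_0/1}$ already and there is no Euclidean phase.)

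Once the base fan is reached I would invoke \e{kx} for non-corner points and \e{kc} for the corners $\alpha_{n,k}$, now specialised to $k=a_m$. The four lines of \e{kx}, together with the corner value $[\,k-1+e^{\pi i/3}\,]$ from \e{kc}, furnish exactly five possible tails for $z^{(m)}$: namely $[a_m-1,1,n-1+z_0]$, $[a_m-1,1+z_0]$, $[a_m-1+z_0]$, $[a_m+z_0]$ and $[a_m,|n|+z_0]$. Prepending the already-extracted block $a_0,\dots,a_{m-1}$ converts these into the five displays of \e{pqx}, with $a_{m+2}=n-1$ in the first and $a_{m+1}=|n|$ in the last. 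The case $z\in\overline{G}_{p/q}$ I would dispatch by the symmetry already used in Theorem~\ref{genc}, interchanging $\Fd$ with $\overline{\Gd}$: then \e{kx} holds verbatim and \e{kc} is replaced by its conjugate, but the list \e{pqx} is the same. Since the algorithm outputs the canonical representation, this identifies the canonical continued fraction of $z$ with one of the five forms.

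The step I expect to be most delicate is the index bookkeeping above: I must be sure that canonicity forces $a_m\gqs 2$ and hence that the last base fan is reached after exactly the $m$ steps $a_0,\dots,a_{m-1}$, neither one fewer nor with a spurious trailing $1$. I would confirm this by checking $\lfloor\theta_i\rfloor=a_i$ for $0\lqs i\lqs m-1$ (which can fail only in the discarded $a_m=1$ representation) and by noting that the third form $[a_0,\dots,a_{m-1},a_m-1+z_0]$ arises only from the corner $z^{(m)}=\alpha_{0,a_m}$, i.e.\ $z_0=e^{\pi i/3}$, since \e{kx} offers no non-corner tail of that shape.
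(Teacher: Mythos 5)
Your proposal is correct and takes essentially the same route as the paper, which derives Corollary \ref{fgc} directly from the proof of Theorem \ref{genc} (``From the details of the proof we also obtain''): the Euclidean phase follows the canonical continued fraction of $p/q$ until the running point lands in $F_{a_m/1}$ or $\overline G_{a_m/1}$ with $k=a_m\gqs 2$, and the five forms of \e{pqx} are then read off from \e{kx} and \e{kc}. The extra bookkeeping you supply---that canonicity forces $\lfloor\theta_i\rfloor=a_i$ for $i<m$ and hence exactly $m$ Euclidean steps, and that the third form $[a_0,\dots,a_{m-1},a_m-1+z_0]$ arises only from the corner case of \e{kc}---is precisely the detail the paper compresses into ``Following the usual continued fraction of $p/q$, we eventually arrive at~$\dots$''.
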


So the first $m$ terms of the continued fractions of $p/q$ and $z$ agree. Ignoring $z_0$, the next terms differ by at most $1$, and the  continued fraction of  $z$ has up to two further  terms.

Note that if $z$ is in the middle of a boundary edge then it is in two different fans. If $z$ is a corner then it is in three. Any pair of fans sharing a border must have fraction indices $p_1/q_1$, $p_2/q_2$ that are Farey neighbors: $|p_1 q_2- p_2 q_1|=1$. So  Corollary \ref{fgc} implies that  the continued fractions of Farey neighbors can only differ in their last terms. 
This will be seen in Proposition \ref{fnb} with a geometric proof.

For example,  $p_1/q_1=3483/1258$ and $p_2/q_2=778/281$ are Farey neighbors, and let $p_3/q_3$ be their mediant. Then the fans $F_{p_1/q_1}$,  $F_{p_2/q_2}$ and  $F_{p_3/q_3}$ all contain a common corner $z$. We have
\begin{align*}
  p_1/q_1 & = [2, 1, 3, 3, 10, 2, 4], \\
  p_2/q_2 & = [2, 1, 3, 3, 10, 2], \\
   p_3/q_3 & = [2, 1, 3, 3, 10, 2, 5], \\
    z = \frac{p_1(e^{\pi i/3}-1)-p_2}{q_1(e^{\pi i/3}-1)-q_2} & = [2, 1, 3, 3, 10, 2, 4+e^{\pi i/3}],
\end{align*}
with $z$ satisfying one of \e{pqx} for each of $p_1/q_1$, $p_2/q_2$ and $p_3/q_3$.
Proposition \ref{conv} will provide a converse to Corollary \ref{fgc}.

\section{Uniqueness} \la{uni}

A real number can have more than one continued fraction expansion, with for example $[4,3]=[4,2,1]$. The next well-known result, see \cite[Thms.~160, 169]{hawr}, shows that this is the only possible kind of non-uniqueness.

\begin{theorem} \la{cfhw}
Let $a_j$, $b_j$ be integers, positive for $j\gqs 1$. Suppose 
\begin{equation*} 
[a_0,a_1, \dots ,a_{m-1},a_m]=[b_0,b_1, \dots ,b_{n-1} ,b_n]
\end{equation*}
for  $a_m, b_n \gqs 2$. Then $m=n$ and $a_j=b_j$ for all $j$. 
For two infinite expansions 
$
[a_0,a_1, a_2, \dots ]=[b_0,b_1,  b_2, \dots]
$,
we may also conclude that $a_j=b_j$ for all $j$. 
\end{theorem}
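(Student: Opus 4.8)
The plan is to reduce the whole statement to one observation: under the stated normalization, the leading coefficient of a continued fraction is recovered as the floor of its value. Write $x=[a_0,a_1,\dots,a_m]$. If $m=0$ then $x=a_0\in\Z$. If $m\geq 1$ then $x=a_0+1/[a_1,\dots,a_m]$, and I claim the tail $\xi:=[a_1,\dots,a_m]$ satisfies $\xi>1$, so that $0<1/\xi<1$, giving $a_0=\lfloor x\rfloor$ and $x\notin\Z$. This yields a clean dichotomy: a normalized finite continued fraction has integer value precisely when $m=0$.

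The key step, and the single place where the hypothesis $a_m\geq 2$ is used, is the bound $\xi>1$. When $m\geq 2$ it is automatic, since $\xi=a_1+1/[a_2,\dots,a_m]>a_1\geq 1$ because $[a_2,\dots,a_m]>0$. When $m=1$ the tail is just $\xi=a_1=a_m\geq 2>1$, which is exactly where the normalization enters; without it one meets the familiar collision $[\dots,a_{m-1},1]=[\dots,a_{m-1}+1]$. I would flag this as the \emph{main obstacle} to state cleanly, although it is entirely elementary, and it also explains structurally why the terminal condition $a_m\geq 2$ is the correct one.

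With the dichotomy in hand the finite case follows by induction on $m+n$, the normalization being preserved under reduction since the terminal coefficient is untouched. Suppose $[a_0,\dots,a_m]=[b_0,\dots,b_n]=:x$. If $x\in\Z$, the dichotomy forces $m=n=0$ and $a_0=x=b_0$. Otherwise $m,n\geq 1$, and taking floors gives $a_0=\lfloor x\rfloor=b_0$; subtracting and inverting yields $[a_1,\dots,a_m]=1/(x-a_0)=[b_1,\dots,b_n]$, two normalized continued fractions of smaller total length. The inductive hypothesis then gives $m=n$ and $a_j=b_j$ for $1\leq j\leq m$, completing the step.

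For the infinite case I would first record that $[a_0,a_1,a_2,\dots]$ converges to a finite real $x$, via the standard estimate $|p_k/q_k-p_{k-1}/q_{k-1}|=1/(q_kq_{k-1})\to 0$ on its convergents. The same floor computation applies: with $\xi=[a_1,a_2,\dots]$ finite and positive one has $\xi=a_1+1/[a_2,a_3,\dots]>a_1\geq 1$, so $a_0=\lfloor x\rfloor$ and $1/(x-a_0)=[a_1,a_2,\dots]$. Running this reduction simultaneously on $[a_0,a_1,\dots]$ and $[b_0,b_1,\dots]$ and inducting on the index $k$ — after $k$ steps the tails $[a_k,a_{k+1},\dots]$ and $[b_k,b_{k+1},\dots]$ are equal, whence $a_k=b_k$ on taking floors — shows $a_j=b_j$ for every $j$. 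Here no terminal normalization is needed, because an infinite tail automatically exceeds $1$.
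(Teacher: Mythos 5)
Your proof is correct: the dichotomy via the tail bound $\xi>1$ (with the hypothesis $a_m\gqs 2$ entering exactly at the $m=1$ step), the floor extraction $a_0=\lfloor x\rfloor$, and the induction on total length, together with the convergence argument and tail-by-tail induction in the infinite case, are all sound. Note that the paper does not prove Theorem \ref{cfhw} at all — it cites it as well known from Hardy and Wright [Thms.~160, 169] — and your argument is essentially that classical proof, so there is nothing further to compare.
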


Let $\rho:=e^{\pi i/3}$. Then $\{\rho, \rho^2, \rho^4, \rho^5\} \subseteq \Fd'$ are the four corners of the region, as seen in Figure \ref{funds}. The left corners $\rho^2, \rho^4$ are the primitive $3$rd roots of unity; the right corners $\rho, \rho^5$ are the primitive $6$th roots.

\begin{theorem} \la{corn2}
Let $a_j$, $b_j$ be integers, positive for $j\gqs 1$. Suppose 
\begin{equation} \la{ab}
[a_0,a_1, \dots ,a_{m-1},a_m+z_0]=[b_0,b_1, \dots ,b_{n-1} ,b_n+w_0]
\end{equation}
for  $z_0$, $w_0 \in \Fd'-\{0, \rho^2, \rho^4\}$. Then $m=n$, $a_j=b_j$ for all $j$, and $z_0 = w_0$.
\end{theorem}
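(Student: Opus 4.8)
The plan is to encode each side as a M\"obius transformation and then reduce the entire statement to the single assertion $z_0 = w_0$. Writing $T = \left(\begin{smallmatrix}1&1\\0&1\end{smallmatrix}\right)$ and $J=\left(\begin{smallmatrix}0&1\\1&0\end{smallmatrix}\right)$ as in \eqref{tj}, the finite continued fraction unwinds as $[a_0,\dots,a_m+z_0] = M z_0$, where $M := T^{a_0} J T^{a_1} J \cdots J T^{a_m} \in \GL(2,\Z)$, and likewise $[b_0,\dots,b_n+w_0] = N w_0$. Equating the two gives $w_0 = V z_0$ with $V := N^{-1} M \in \GL(2,\Z)$. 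First I would record two preliminary facts: by \eqref{fdp2} we have $\overline{\Fd'} \cap \R = \{0\}$, so $z_0, w_0 \neq 0$ forces both to be non-real; and since $\{\rho^2,\rho^4\} = \{e^{\pm 2\pi i/3}\}$ and the clause ``$a_m \gqs 2$ if $z_0=0$'' is vacuous here, both representations are in fact canonical in the sense of Definition \ref{srf}.

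Next I claim that for $z_0$ not lying in an elliptic orbit it suffices to prove $z_0 = w_0$. Granting this, $V$ fixes the non-real point $z_0$; but an integer M\"obius map of determinant $-1$ has fixed-point quadratic with discriminant $(\operatorname{tr})^2 + 4 > 0$, hence only real fixed points (equivalently, it reverses the two half-planes), so $\det V = 1$ and $V \in \SL(2,\Z)$. Away from the orbits of $i$ and $\rho$ the stabiliser of $z_0$ in $\PSL(2,\Z)$ is trivial, giving $V = \pm I$ and thus $M = \pm N$ as matrices. The two columns of $M$ are the last two convergent vectors of $[a_0,\dots,a_m]$, so $M = \pm N$ makes the convergents of the two expansions coincide; the uniqueness of the continued fraction factorisation of a matrix (Theorem \ref{cfhw}, with the differing penultimate convergent excluding the $[\dots,c]=[\dots,c-1,1]$ collapse) then yields $m=n$ and $a_j = b_j$ for all $j$. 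The elliptic orbits, where this stabiliser argument breaks down, are precisely the multi-fan loci treated below.

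It therefore remains to analyse the geometry of Section \ref{hyf}. The common value $z$ lies, by Theorem \ref{genc}, in one of the fans $F_{p/q}$ or $\overline G_{p/q}$, and I would use Corollary \ref{fgc} to pin the first $m$ (resp.\ $n$) coefficients to those of $p/q$ and to reduce $z_0, w_0$ to the five shapes in \eqref{pqx}. When $z$ lies in the interior of a single fan it belongs to exactly one fan, so $z_0$ is non-elliptic, the canonical conditions single out one member of \eqref{pqx}, and the previous paragraph finishes the job. The genuine case-work is for $z$ on a shared boundary: edge-midpoints (the orbit of $i$, lying in two Farey-neighbour fans) and, above all, the corners (the orbit of $\rho$, lying in the three fans whose indices are a Farey pair together with their mediant, as in the displayed example $z=[2,1,3,3,10,2,4+\rho]$).

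The corner case is the main obstacle, and it is exactly what the exclusion of $\rho^2, \rho^4$ is designed to handle. At such a triple point the three fans yield three candidate finite representations whose trailing parameters are corner values, and the crux is the computation---via \eqref{kc} and Lemma \ref{flip}---that precisely one of the three has its parameter in $\{\rho,\rho^5\}=\{e^{\pm\pi i/3}\}$ while the other two have parameter in the excluded set $\{\rho^2,\rho^4\}$. Under the hypothesis $z_0,w_0\in\Fd'-\{0,\rho^2,\rho^4\}$ both must then be that single surviving representation, so $z_0=w_0$ and the coefficients agree. At an edge-midpoint the relating map fixes the $i$-type point (indeed $Si=i$), so the two representations already share their parameter and any discrepancy in the finite tail is the classical $[\dots,c]=[\dots,c-1,1]$ ambiguity removed by Theorem \ref{cfhw}. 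Verifying that this corner bookkeeping comes out with a unique unexcluded value---and that the same holds below $\R$ after conjugation, using $-\Fd=\overline\Gd$---is the technical heart of the argument.
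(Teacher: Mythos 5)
There is a genuine gap, and it sits exactly where the theorem lives. Your second paragraph derives $m=n$ and $a_j=b_j$ \emph{from} $z_0=w_0$ (``Granting this\dots''), so everything hinges on the third paragraph supplying $z_0=w_0$; but the route you propose for that is circular. Corollary \ref{fgc} is extracted from the proof of Theorem \ref{genc} and describes the expansion \emph{produced by the algorithm}; to conclude that an arbitrary representation as in \eqref{ab}, which merely happens to satisfy the canonical side conditions, must be one of the five shapes in \eqref{pqx}, you would need to know that every canonical representation coincides with the algorithmic one --- which is precisely the uniqueness being proved. Even if you substitute Proposition \ref{conv} (which is independent of Theorem \ref{corn2} and does legitimately convert each representation into a fan membership), you are still left with showing that two distinct shapes in \eqref{pqx} cannot represent the same $z$ with both parameters in $\Fd'-\{0,\rho^2,\rho^4\}$; your phrase ``the canonical conditions single out one member'' asserts this rather than proves it. The tool you never invoke, and which makes the fan geometry unnecessary for this theorem, is the fundamental-domain property of $\Fd$ itself: the paper normalizes $z_0$ and $w_0$ into $\Fd$ by applying $J$ (for odd length) and then $S^\alpha$, $S^\beta$, obtains $z_2=w_2$ at once because each $\SL(2,\Z)$-orbit meets $\Fd$ exactly once, and is thereby reduced to the finite check that $N^{-1}M$ lies in the explicitly known stabilizer of a point of $\Fd$.

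Your corner bookkeeping is correct in substance --- at a triple point exactly one of the three representations has parameter in $\{\rho,\rho^5\}$ while the other two have $\rho^2$ and $\rho^4$, matching the paper's identities \eqref{cc1}--\eqref{cc6} --- but you explicitly defer its verification, and that verification \emph{is} the theorem. In the paper it is carried out by letting $N^{-1}M$ range over $\pm ST$, $\pm(ST)^{-1}$ (the nontrivial stabilizer of $\rho^2$) for the four choices $\alpha,\beta\in\{0,1\}$, with the positivity of the entries of the words $L^{a_0}R^{a_1}\cdots$ eliminating the impossible sign patterns. That same positivity step is also what your edge-midpoint case needs: when $z_2=w_2=i$ the residual ambiguity is $N^{-1}M=\pm S$, which Theorem \ref{cfhw} does not address --- it is killed because a word in $L$ and $R$ has nonnegative entries in a pattern incompatible with $\pm(\text{word})S$, not by the $[\dots,c]=[\dots,c-1,1]$ collapse. (Your dispatch of $z$ below $\R$ is likewise only gestured at; the paper simply applies the upper-half-plane case to $[1,z]=1+1/z\in\H$.) In short, the skeleton is right --- the M\"obius encoding, the observation that determinant $-1$ maps have no nonreal fixed points, the trivial-stabilizer argument generically, and the convergent-matching endgame via Theorem \ref{cfhw} --- but the proposal is circular where it must produce $z_0=w_0$ and omits the finite stabilizer-plus-positivity computation that constitutes the paper's proof.
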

\begin{proof}
Let $z$ be the common value of \e{ab}. By adding a large integer to both sides, we may suppose that $a_0$, $b_0$ are positive. Assume that $z_0$, $w_0 \in \Fd'-\{0\}$ and we will examine how uniqueness may fail when one is equal to $\rho^2$ or $\rho^4$. 

Define the matrices in $\SL(2,\Z)$,
\begin{equation} \la{lru}
  L := T=\begin{pmatrix} 1 & 1 \\ 0 & 1 \end{pmatrix}, \quad R := TST = -ST^{-1}S =\begin{pmatrix} 1 & 0 \\ 1 & 1 \end{pmatrix}.
\end{equation}
Then
\begin{equation*}
  L^{a_j} w= a_j +w, \qquad  R^{a_j} w= \frac 1{a_j +1/w},
\end{equation*}
and hence
\begin{equation*}
  z=L^{a_0}R^{a_1} \cdots L^{a_m} z_0 \qquad \text{and} \qquad  z=L^{a_0}R^{a_1} \cdots R^{a_m} J z_0
\end{equation*}
for $m$ even and odd, respectively. 
Put
\begin{equation} \la{z1}
  z_1 = \begin{cases}
  z_0 & \quad \text{ if $m$ even};\\
   J z_0 & \quad \text{ if $m$ odd},\\
  \end{cases}
\qquad \qquad
  w_1 = \begin{cases}
  w_0 & \quad \text{ if $n$ even};\\
   J w_0 & \quad \text{ if $n$ odd}.\\
  \end{cases}
\end{equation}
Therefore
\begin{equation} \la{alt}
  L^{a_0}R^{a_1} \cdots [L\text{ or }R]^{a_m} z_1 = L^{b_0}R^{b_1} \cdots [L\text{ or }R]^{b_n} w_1
\end{equation}
for $z_1$, $w_1 \in \Fd'-\{0\}$. The notation in \e{alt} will be used from here on to always indicate alternating powers of $L$ and $R$. If  $z\in \H$, as we will assume until the final part of the proof, then so are $z_1$, $w_1$ and hence
\begin{equation} \la{z2}
  z_2=S^\alpha z_1 \in \Fd, \qquad  w_2=S^\beta w_1 \in \Fd \qquad \text{for some} \quad \alpha, \beta \in \{0,1\}.
\end{equation}
Then
 $ M z_2 = N w_2$ for 
\begin{equation*}
 M:=L^{a_0}R^{a_1} \cdots [L\text{ or }R]^{a_m}S^\alpha, \qquad 
 N:= L^{b_0}R^{b_1} \cdots [L\text{ or }R]^{b_n} S^\beta.
\end{equation*}
Hence $z_2=w_2$ and $N^{-1}M \in \SL(2,\Z)$ fixes this element of $\Fd$. In $\SL(2,\Z)$ it is well known that $i$ is fixed by $\{ \pm I, \pm S\}$, $\rho^2=e^{2\pi i/3}$ is fixed by $\{ \pm I, \pm ST, \pm (ST)^2\}$ and every other element  of $\Fd$ is just fixed by $\{ \pm I\}$.

Assume first that $N^{-1}M \in \{ \pm I, \pm S\}$. Then for some $\delta \in \{0,1\}$ we have 
\begin{equation*}
 L^{a_0}R^{a_1} \cdots [L\text{ or }R]^{a_m} = \pm L^{b_0}R^{b_1} \cdots [L\text{ or }R]^{b_n} S^\delta.
\end{equation*}
Note that $L^{a_0}R^{a_1} \cdots [L\text{ or }R]^{a_m} \in \SL(2,\Z)$ has $4$ strictly positive entries  if $m\gqs 1$ and $3$ strictly positive entries and bottom left  entry zero if $m=0$. Similarly for $L^{b_0}R^{b_1} \cdots [L\text{ or }R]^{b_n}$. Hence, by this positivity, $\delta$ must be $0$ and
\begin{equation} \la{lar}
 L^{a_0}R^{a_1} \cdots [L\text{ or }R]^{a_m} =  L^{b_0}R^{b_1} \cdots [L\text{ or }R]^{b_n}.
\end{equation}
Letting each side of \e{lar} act on $1$ next implies that
\begin{equation*}
  [a_0, a_1, \dots, a_{m-1}, a_m+1]=[b_0, b_1, \dots, b_{n-1}, b_n+1],
\end{equation*}
and the theorem now follows in this case  from Theorem \ref{cfhw}.

That leaves the fixed point $z_2 =w_2 = \rho^2$ and $N^{-1}M = \pm ST$ or $\pm (ST)^{-1}$. In the former,
\begin{equation} \la{ara}
  L^{a_0}R^{a_1} \cdots [L\text{ or }R]^{a_m} = \pm L^{b_0}R^{b_1} \cdots [L\text{ or }R]^{b_n} S^\beta ST S^\alpha.
\end{equation}
We examine the four possibilities for $\alpha$ and $\beta$ in \e{ara} next, describing the exceptions to uniqueness they give rise to.

\vskip 3mm
{\bf Case  $\bm{\alpha=0}$, $\bm{\beta=0}$.} 
If $n$ is odd then the right side of \e{ara} has
\begin{equation*}
  \pm L^{b_0}R^{b_1} \cdots R^{b_n} ST = \pm L^{b_0}R^{b_1} \cdots R^{b_n-1} TSTST
\end{equation*}
from which it follows that
\begin{equation*}
  L^{a_0}R^{a_1} \cdots [L\text{ or }R]^{a_m}S = \pm L^{b_0}R^{b_1} \cdots R^{b_n-1}.
\end{equation*}
However, this is not possible by positivity. When $n$ is even, use $L^{b_n-1} L ST =  L^{b_n-1}R$ and act on $1$ to find
\begin{equation*}
  [a_0, a_1, \dots, a_{m-1}, a_m+1]= \begin{cases}
  [b_0, b_1, \dots, b_{n-1}+2] & \quad \text{ if $b_n=1$};\\
  [b_0, b_1, \dots, b_n-1, 2] & \quad \text{ if $b_n \gqs 2$}.\\
  \end{cases}
\end{equation*}
Therefore, when $b_n=1$ then $m=n-1$, $a_m=b_{n-1}+1$, $z_0=\rho^4$ and $w_0=\rho^2$. This means that, in this instance,  \e{ab} corresponds to the identity
\begin{equation} \la{cc1}
  [c_0, c_1, \dots, c_{m-1}, (c_m+1)+\rho^4] =  [c_0, c_1, \dots, c_{m}, 1 +\rho^2].
\end{equation}
When $b_n\gqs 2$ then $m=n+1$, $a_{n+1}=1$, $a_n=b_{n}-1$, $z_0=\rho^4$ and $w_0=\rho^2$. Then  \e{ab} corresponds to 
\begin{equation} \la{cc2}
  [c_0, c_1, \dots, c_{n}, 1+\rho^4] =  [c_0, c_1, \dots, (c_{n}+1) +\rho^2].
\end{equation}

{\bf Case  $\bm{\alpha=0}$, $\bm{\beta=1}$.}
Here
\begin{equation*} 
  L^{a_0}R^{a_1} \cdots [L\text{ or }R]^{a_m} = L^{b_0}R^{b_1} \cdots [L\text{ or }R]^{b_n} L.
\end{equation*}
Acting on $1$ finds
\begin{equation*}
  [a_0, a_1, \dots, a_{m-1}, a_m+1]= \begin{cases}
  [b_0, b_1, \dots, b_{n-1}, b_n+2] & \quad \text{ if $n$ even};\\
  [b_0, b_1, \dots, b_{n-1}, b_n, 2] & \quad \text{ if $n$ odd}.\\
  \end{cases}
\end{equation*}
So for $n$ even, $m=n$, $a_m=b_n+1$, $z_0=\rho^2$ and $w_0=\rho$. This means that \e{ab} corresponds to the identity
\begin{equation} \la{cc3}
  [c_0, c_1, \dots, c_{m-1}, (c_m+1)+\rho^2] =  [c_0, c_1, \dots, c_{m-1}, c_m +\rho].
\end{equation}
For $n$ odd, $m=n+1$, $a_m=1$, $z_0=\rho^2$ and $w_0=\rho^5$. Thus \e{ab} corresponds to 
\begin{equation} \la{cc4}
  [c_0, c_1, \dots, c_{n-1}, c_n, 1+\rho^2] =  [c_0, c_1, \dots, c_{n-1}, c_n +\rho^5].
\end{equation}

{\bf Case  $\bm{\alpha=1}$, $\bm{\beta=0}$.}
With $STS \cdot R=-I$, \e{ara} becomes
\begin{equation*}
   L^{a_0}R^{a_1} \cdots [L\text{ or }R]^{a_m}R = L^{b_0}R^{b_1} \cdots [L\text{ or }R]^{b_n}.
\end{equation*}
Consequently
\begin{equation*}
  [b_0, b_1, \dots, b_{n-1}, b_n+1]= \begin{cases}
  [a_0, a_1, \dots,  a_m, 2] & \quad \text{ if $m$ even};\\
  [a_0, a_1, \dots, a_{m-1}, a_m+2] & \quad \text{ if $m$ odd}.\\
  \end{cases}
\end{equation*}
If $m$ is even then $n=m+1$, $b_n=1$, $z_0=\rho$ and $w_0 = \rho^4$. Therefore \e{ab} corresponds to the identity
\begin{equation} \la{cc5}
  [c_0, c_1, \dots, c_{m-1}, c_m+\rho] =  [c_0, c_1, \dots,  c_m, 1+\rho^4].
\end{equation}
If $m$ is odd then $n=m$, $b_n=a_m+1$, $z_0=\rho^5$ and $w_0 = \rho^4$. Here \e{ab} corresponds to 
\begin{equation} \la{cc6}
  [c_0, c_1, \dots, c_{m-1}, c_m+\rho^5] =  [c_0, c_1, \dots, c_{m-1}, (c_m+ 1)+\rho^4].
\end{equation}

{\bf Case  $\bm{\alpha=1}$, $\bm{\beta=1}$.}
This is easily ruled out by positivity.

\vskip 3mm
We have completed the analysis for $N^{-1}M = \pm ST$. 
When $N^{-1}M = \pm (ST)^{-1}$ then this is just the same, but with $M$ and $N$ interchanged, and so produces no further exceptions to uniqueness. Since $\rho^2$ or $\rho^4$ appear in each of the exceptions \e{cc1} -- \e{cc6}, we obtain the theorem provided our assumption holds  that the common value $z$ of \e{ab} is in $\H$. If $z\in \overline{\H}$ then apply this result to $[1,z]=1+1/z \in \H$ to obtain the theorem in this case also.
\end{proof}

Note that $\rho^2$ and $\rho^4$ are the only two corners with at least one  appearing in each of \e{cc1} -- \e{cc6}. So we cannot replace them in the statement of Theorem \ref{corn2} by a different pair or a single corner. This also means that our algorithm convention of choosing $\delta=0$ when $z-m = e^{\pm \pi i/3}$ is the natural one.

\begin{proof} [Proof of Theorem \ref{main}] Theorem \ref{genc} shows that the algorithm produces a continued fraction representation for every $z\in \C$, necessarily canonical. Theorems \ref{cfhw}, \ref{corn2} prove its uniqueness.
\end{proof}

We also see that for every integer $a_0$, sequence $a_1, \dots, a_n$ of positive integers and $z_0 \in \Fd' - \{0, \rho^2, \rho^4\}$, there is a unique $[a_0, a_1, \dots, a_n +z_0]$ in $\C - \R$, with all elements of $\C - \R$ taking this form.

\section{Some more properties}

The simple reasoning at the beginning of the proof of Theorem \ref{corn2} will be needed again, so we state it as a lemma.

\begin{lemma} \la{fff}
Let $z \in \H$ have continued fraction $[a_0, a_1, \dots, a_n+z_0]$. Then $z = M z_2$ for $z_2 \in \Fd$ and 
\begin{equation*}
  M = L^{a_0}R^{a_1} \cdots [L\text{ or }R]^{a_n}S^\alpha \in \SL(2,\Z),
\end{equation*}
for some $\alpha \in \{0,1\}$. Here  $z_2 = S^\alpha z_0$ if $n$ is even and $z_2 = S^\alpha (1/z_0)$ if $n$ is odd.
\end{lemma}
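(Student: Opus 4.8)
The plan is to repackage the opening computation from the proof of Theorem \ref{corn2}, since the present lemma is exactly the factorization established there. First I would record the two elementary identities coming from \eqref{lru}, namely $L^{a_j} w = a_j + w$ and $R^{a_j} w = 1/(a_j + 1/w)$, which let one read off a single level of the continued fraction as an action of $L$ or $R$ on $\H$.

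Next I would unfold $z = [a_0, a_1, \dots, a_n + z_0]$ from the outside in. Writing $\xi_j$ for the tail beginning at index $j$, one has $\xi_j = a_j + 1/\xi_{j+1}$ with $\xi_n = a_n + z_0$ and $z = \xi_0$. A short induction (equivalently, the direct expansion in \eqref{alt}) shows that even-indexed levels contribute factors $L^{a_j}$ and odd-indexed levels factors $R^{a_j}$, so that $z = L^{a_0} R^{a_1} \cdots [L \text{ or } R]^{a_n} z_1$, where $z_1 = z_0$ when $n$ is even and $z_1 = J z_0 = 1/z_0$ when $n$ is odd; this is precisely \eqref{z1}. The reciprocal in the odd case is exactly what is needed so that the final factor $R^{a_n}$ produces $R^{a_n}(1/z_0) = 1/(a_n + z_0)$. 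I would name the matrix product $M' = L^{a_0} R^{a_1} \cdots [L \text{ or } R]^{a_n} \in \SL(2,\Z)$, so that $z = M' z_1$.

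The only point needing care is locating $z_1$. Since $M' \in \SL(2,\Z)$ acts on $\C$ as a real Möbius transformation of determinant $1$, it restricts to a bijection of $\H$; hence $z_1 = (M')^{-1}z \in \H$. On the other hand $z_0 \in \Fd'$, and $\Fd'$ is invariant under $w \mapsto 1/w$ by \eqref{fdp}, so $z_1 \in \Fd'$ in either parity. Therefore $z_1 \in \Fd' \cap \H = \Fd \cup S\Fd$, and I can choose $\alpha \in \{0,1\}$ with $z_2 := S^\alpha z_1 \in \Fd$ exactly as in \eqref{z2}, taking $\alpha = 0$ if $z_1 \in \Fd$ and $\alpha = 1$ if $z_1 \in S\Fd$ (the latter using that $S^2 = -I$ acts trivially, so $S\cdot S\Fd = \Fd$).

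Finally I would reassemble. Because $S^2 = -I$ acts as the identity, $S^\alpha$ is an involution on $\H$, so $z_2 = S^\alpha z_1$ gives $z_1 = S^\alpha z_2$, whence $z = M' z_1 = M' S^\alpha z_2 = M z_2$ with $M = M' S^\alpha = L^{a_0} R^{a_1} \cdots [L \text{ or } R]^{a_n} S^\alpha$, as claimed. Unwinding $z_2 = S^\alpha z_1$ according to the parity of $n$ then yields $z_2 = S^\alpha z_0$ for $n$ even and $z_2 = S^\alpha(1/z_0)$ for $n$ odd. The main obstacle is entirely bookkeeping: keeping the alternation of $L$ and $R$ aligned with the parity of each index, and remembering that $S$ has order $2$ as a transformation so that the single exponent $\alpha$ suffices to move $z_1$ into $\Fd$ and to absorb the inverse when passing from $z = M' z_1$ to $z = M z_2$.
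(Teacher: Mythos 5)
Your proof is correct and is essentially the paper's own argument: Lemma \ref{fff} is stated as a repackaging of the opening of the proof of Theorem \ref{corn2}, which performs exactly your expansion $z = L^{a_0}R^{a_1}\cdots[L\text{ or }R]^{a_n}z_1$ with $z_1=z_0$ or $Jz_0$ according to the parity of $n$ (the paper's \e{z1}), followed by moving $z_1$ into $\Fd$ by an appropriate $S^\alpha$ as in \e{z2}. Your extra bookkeeping --- verifying $z_1=(M')^{-1}z\in\H$ via the $\SL(2,\Z)$ action, using the invariance of $\Fd'$ under $w\mapsto 1/w$ to get $z_1\in\Fd'\cap\H=\Fd\cup S\Fd$, and noting $S^2=-I$ acts trivially --- is all correct and simply makes explicit what the paper leaves implicit.
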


Note that throughout this paper, a continued fraction $[a_0, a_1, \dots, a_n+z_0]$ is always assumed to satisfy the first part of Definition \ref{srf}, but may not  be canonical.

Starting with a usual continued fraction $[a_0, a_1, \dots, a_n]$,  its {\em convergents} are $[a_0, a_1, \dots, a_m] = h_m/k_m$ in lowest terms, for $m\lqs n$.  These numerators and denominators satisfy the recursions
\begin{equation} \la{hk}
  h_m = a_m h_{m-1}+h_{m-2}, \qquad  k_m = a_m k_{m-1}+k_{m-2}, \qquad \text{with} \qquad \frac{h_{-1}}{k_{-1}}=\frac 10, \quad \frac{h_{-2}}{k_{-2}}=\frac 01.
\end{equation}
Since $a_j \gqs 1$ for $j\gqs 1$, it follows from \e{hk} that $k_m\gqs f_{m+1}$ for the exponentially growing Fibonacci numbers $f_0=0$, $f_1=1,$ etc. 
For any $z\in \C$ we have, \cite[Sect. 10.2]{hawr},
\begin{equation}\label{hk2}
  [a_0, a_1, \dots, a_n+z]  = \frac{h_{n-1} z +h_{n}}{k_{n-1} z +k_{n}} = \begin{pmatrix} h_{n-1} & h_n \\ k_{n-1} & k_n \end{pmatrix} z,
\end{equation}
where the matrix in \e{hk2} has determinant $(-1)^n$.
The sign of a real number is $-1, 0$ or $1$ and 
we have easily:
\begin{equation}\label{sgn}
  z=[a_0,a_1, \dots, a_n+z_0] \quad \implies \quad \sgn(\Im(z)) = (-1)^n \sgn(\Im(z_0)).
\end{equation}

\begin{prop} \la{conv}
Suppose $z \in \H$ has continued fraction $[a_0,a_1, \dots, a_n+z_0]$. Then, with \e{hk2}, $z$ is in the fan $F_{h_{n-1}/k_{n-1}}$ if $z_0 \in \pm\Fd$ and in $F_{h_{n}/k_{n}}$ otherwise.
\end{prop}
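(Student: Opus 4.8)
The plan is to read off the fan directly from the matrix in \e{hk2}. Writing $M_n := \begin{pmatrix} h_{n-1} & h_n \\ k_{n-1} & k_n \end{pmatrix}$, we have $z = M_n z_0$ with $\det M_n = (-1)^n$. Recall from Section \ref{hyf} that $\Fd' - \{0\} = \Fd \cup S\Fd \cup \overline\Gd \cup S\overline\Gd$, and that each of these four pieces is the image of $\Fd$ under a single fractional linear map: namely $\Fd = I\Fd$, $S\Fd$, $\overline\Gd = -\Fd = D\Fd$ with $D := \begin{pmatrix} 1 & 0 \\ 0 & -1 \end{pmatrix}$, and $S\overline\Gd = J\Fd$ (since $SD = J$). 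So I would write $z_0 = A_0 w$ with $w \in \Fd$ and $A_0 \in \{I, S, D, J\}$, the choice recording which piece contains $z_0$. Because $\Fd \subseteq F_{1/0}$, this reduces the whole proposition to the identity $z = (M_n A_0) w \in (M_n A_0) F_{1/0}$, together with the fact recorded in \e{fans} that for any element of $\SL(2,\Z)$ with first column $(p,q)$, its action on $F_{1/0}$ is exactly $F_{p/q}$.

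The heart of the argument is then a short determinant-and-first-column check. The maps $I, S$ have determinant $+1$ and preserve $\H$, while $D, J$ have determinant $-1$ and send $\H$ below $\R$; hence $A_0 \in \{I,S\}$ exactly when $z_0$ lies above $\R$, which by \e{sgn} (using $z \in \H$) happens exactly when $n$ is even. Therefore $\det A_0 = (-1)^n$ in all four cases, giving $\det(M_n A_0) = (-1)^n (-1)^n = 1$, so $M_n A_0 \in \SL(2,\Z)$ and its image of $F_{1/0}$ is a genuine fan. A direct multiplication shows that $M_n I$ and $M_n D$ both have first column $(h_{n-1}, k_{n-1})$, whereas $M_n S$ and $M_n J$ both have first column $(h_n, k_n)$. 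Since $z_0 \in \pm\Fd = \Fd \cup \overline\Gd$ corresponds to $A_0 \in \{I, D\}$ and the remaining case $z_0 \in S\Fd \cup S\overline\Gd$ to $A_0 \in \{S, J\}$, this produces $F_{h_{n-1}/k_{n-1}}$ and $F_{h_n/k_n}$ respectively, exactly as claimed.

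The main obstacle to watch is precisely the orientation-reversing cases, where $n$ is odd, $M_n$ itself is not in $\SL(2,\Z)$, and so cannot name a fan; the resolution is that the reflection gets absorbed into $A_0$ (the factor $D$ or $J$), which is available exactly because $z_0$ then lies below $\R$. It is important here to use the description $\overline\Gd = -\Fd = D\Fd$ via the holomorphic map $D$ rather than via conjugation, so that $(M_n A_0)w = M_n(A_0 w)$ holds as an ordinary composition of fractional linear maps. The remaining points are routine: the relevant first columns are primitive with nonnegative denominator, since $\gcd(h_m,k_m)=1$ and $k_m \gqs 0$ by \e{hk}, so each $F_{p/q}$ is well-defined under the convention $q \gqs 0$; and when $z_0$ sits on a shared boundary of two of the four pieces, $z$ lands on a common edge of two Farey-neighbor fans, so membership in both is consistent rather than contradictory.
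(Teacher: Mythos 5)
Your proof is correct and follows essentially the same route as the paper: the paper also invokes \e{sgn} to place $z_0$ in one of the four pieces $\Fd$, $S\Fd$, $-\Fd$, $-S\Fd$ according to the parity of $n$, and then computes exactly your four products $M_n I$, $M_n S$, $M_n D$, $M_n J$ (displayed there explicitly, with $D$ and $J$ written as the diagonal sign matrices and their product with $S$) to obtain determinant-one matrices whose first columns are $(h_{n-1},k_{n-1})$ or $(h_n,k_n)$, so that \e{fans} identifies the fan. Your uniform packaging via $A_0 \in \{I,S,D,J\}$ with $\det A_0 = (-1)^n$ is a tidy but cosmetic reorganization of the same computation.
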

\begin{proof}
If $n$ is even, then  \e{sgn} implies that $z_0 \in \Fd$ or $S\Fd$. In these cases, using  \e{hk2}, \e{fans},
\begin{equation*}
  z \in \begin{pmatrix} h_{n-1} & h_n \\ k_{n-1} & k_n \end{pmatrix} \Fd \subseteq F_{h_{n-1}/k_{n-1}} \qquad \text{or} \qquad
  z \in \begin{pmatrix} h_{n-1} & h_n \\ k_{n-1} & k_n \end{pmatrix} S\Fd 
  = \begin{pmatrix} h_{n} & -h_{n-1} \\ k_{n} & -k_{n-1} \end{pmatrix} \Fd\subseteq F_{h_{n}/k_{n}}.
\end{equation*}
If $n$ is odd, then by \e{sgn}, $z_0 \in -\Fd$ or $-S\Fd$. Therefore,
\begin{align*}
  z & \in \begin{pmatrix} h_{n-1} & h_n \\ k_{n-1} & k_n \end{pmatrix} \begin{pmatrix} 1 & 0 \\ 0 & -1 \end{pmatrix} \Fd 
  =\begin{pmatrix} h_{n-1} & -h_n \\ k_{n-1} & -k_n \end{pmatrix}  \Fd 
  \subseteq F_{h_{n-1}/k_{n-1}} \qquad \text{or} \\
  z & \in \begin{pmatrix} h_{n-1} & h_n \\ k_{n-1} & k_n \end{pmatrix} \begin{pmatrix} -1 & 0 \\ 0 & 1 \end{pmatrix} S\Fd 
  = \begin{pmatrix} h_{n} & h_{n-1} \\ k_{n} & k_{n-1} \end{pmatrix} \Fd\subseteq F_{h_{n}/k_{n}}.
\end{align*}
Note that  \e{fans} requires a matrix with determinant $1$, as we have provided here.
\end{proof}

Let $C_{p/q}$ denote the Ford circle 
 that is tangent to $\R$ at $p/q$. It has diameter $1/q^2$ and fits exactly inside the fan $F_{p/q}$.

\begin{cor} \la{conv2}
Suppose $z \in \H$ has continued fraction $[a_0,a_1, \dots, a_n+z_0]$. Then $z$ is in the Ford circle $C_{h_{n-1}/k_{n-1}}$ if $|\Im(z_0)|\gqs 1$. The Ford circle $C_{h_{n}/k_{n}}$ contains $z$ if $|z_0 \pm i/2| \lqs 1/2$. Otherwise, $z$ is not in any Ford circle.
\end{cor}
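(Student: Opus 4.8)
The plan is to reduce the whole statement to a single horoball test. By the description following \e{fans}, the Ford circle $C_{p/q}$ is the image of the horocycle $\{\,\Im(w)=1\,\}$ under a matrix $\g=(\begin{smallmatrix} p & * \\ q & *\end{smallmatrix})\in\SL(2,\Z)$. Such a $\g$ preserves $\H$, carries horoballs to horoballs, and satisfies $\g\cdot\infty=p/q$, so it maps the horoball $\{\,\Im(w)\gqs 1\,\}$ onto the closed disk bounded by $C_{p/q}$. Hence the first step is the criterion that $z$ lies in $C_{p/q}$ if and only if $\Im(\g^{-1}z)\gqs 1$. I would then invoke the explicit matrices already built in the proof of Proposition \ref{conv}: in each of its four cases they present $z=\g w$ with $w\in\Fd$ and $\g\in\SL(2,\Z)$ whose first column is $(\begin{smallmatrix} h_{n-1} \\ k_{n-1}\end{smallmatrix})$ when $z_0\in\pm\Fd$ and $(\begin{smallmatrix} h_{n} \\ k_{n}\end{smallmatrix})$ when $z_0\in\pm S\Fd$. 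For those $\g$ we have $\g^{-1}z=w$, so only $\Im(w)$ must be computed.

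The second step is to read $w=\g^{-1}z$ off those four cases and convert $\Im(w)\gqs 1$ into the stated inequalities. When $z_0\in\pm\Fd$ the point $w$ is $z_0$ or $-z_0$, so $\Im(w)=|\Im(z_0)|$ and the criterion becomes $|\Im(z_0)|\gqs 1$, giving the assertion about $C_{h_{n-1}/k_{n-1}}$. When $z_0\in\pm S\Fd$ the point $w$ is $-1/z_0$ or $1/z_0$, so
\[
 \Im(w)=\frac{|\Im(z_0)|}{|z_0|^2},
\]
and $\Im(w)\gqs 1$ reads $|z_0|^2\lqs|\Im(z_0)|$. Completing the square turns this into $|z_0-i/2|\lqs 1/2$ when $\Im(z_0)>0$ and into $|z_0+i/2|\lqs 1/2$ when $\Im(z_0)<0$, which is exactly the assertion about $C_{h_{n}/k_{n}}$. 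The branch $\pm i/2$ is dictated by \e{sgn}, which forces $\sgn(\Im(z_0))=(-1)^n$.

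The clause I expect to require the most care is the last one, that $z$ lies in no Ford circle when both inequalities fail. For this I would use two features of the inscribed circles visible in Figures \ref{fanf2} and \ref{fanf}: each Ford disk sits inside its own fan $F_{p/q}$, and distinct open Ford disks are disjoint. Together with the tiling property recorded after Figure \ref{fanf2} --- that a point off the orbits of $i$ and $\rho$ lies in a unique fan --- this forces any Ford circle containing $z$ to be the one inscribed in the fan whose interior contains $z$. Proposition \ref{conv} names that fan as $F_{h_{n-1}/k_{n-1}}$ or $F_{h_{n}/k_{n}}$, and the first two steps show that membership in its inscribed circle is governed precisely by $|\Im(z_0)|\gqs 1$ or $|z_0\pm i/2|\lqs 1/2$. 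Hence, if both fail, $z$ escapes every Ford disk.

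The main obstacle is making this last reduction airtight at the boundaries: a priori $z$ could sit on a fan edge, or at a tangency point of two Ford circles (the equality cases), where the clean ``unique fan'' statement breaks down. I would handle these by observing that such $z$ lie on a fan boundary and thus outside every open Ford disk, so that overlaps of closed Ford disks occur only at the tangency points, which are exactly the borderline configurations $z_0=\pm i$ where both displayed inequalities hold with equality. This keeps the trichotomy consistent under the convention that ``contains'' refers to the closed disk.
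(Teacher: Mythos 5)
Your proposal is correct and follows exactly the route the paper intends: the corollary is stated without a written proof as a consequence of Proposition \ref{conv}, whose explicit matrices combined with the horoball criterion $\Im(\g^{-1}z)\gqs 1$ and the fact that $C_{p/q}$ is inscribed in $F_{p/q}$ give precisely your computation, including the reduction $\Im(\pm 1/z_0)\gqs 1 \iff |z_0 \mp i/2|\lqs 1/2$ with the sign fixed by \e{sgn}. Your extra care about fan boundaries and tangency points (the orbit of $i$, where both inequalities hold with equality) is a welcome tightening of the ``otherwise'' clause rather than a deviation from the paper's argument.
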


\SpecialCoor
\psset{griddots=5,subgriddiv=0,gridlabels=0pt}
\psset{xunit=0.8cm, yunit=0.8cm, runit=0.8cm}
\psset{linewidth=1pt}
\psset{dotsize=5pt 0,dotstyle=*}
\begin{figure}[ht]
\centering
\begin{pspicture}(-3.5,-2.8)(8,2.8) 

\psset{arrowscale=1.1,arrowinset=0.1}
\newrgbcolor{light}{0.8 0.7 1.0}
\newrgbcolor{bmid}{0.6 0.5 1}
\newrgbcolor{bmid}{0.99609375 0.40234375 0.16796875}

\newrgbcolor{bb}{0.4 0.3 1}
\newrgbcolor{bb}{1 0 0}
\newrgbcolor{cx}{0.8 0.8 1.0}

\newrgbcolor{white2}{0.4 0.3 1}
\newrgbcolor{white2}{1 0 0}
\newrgbcolor{light}{0.99609375 0.71875 0.6171875}

\newrgbcolor{pale}{0.996094, 0.71875, 0.617188}


\psarc[linecolor=pale,fillstyle=solid,fillcolor=pale](0.23498,0){2.76304}{0}{360}

\psarc[linecolor=white,fillstyle=solid,fillcolor=white](-0.198145,0){2.32992}{0}{360}
\psarc[linecolor=white,fillstyle=solid,fillcolor=white](3.83142,0){1.30336}{0}{360}
\psarc[linecolor=white,fillstyle=solid,fillcolor=white](1.88637,0){0.641696}{0}{360}

\psline[linecolor=gray](-3.5,0)(4,0)

\psarc[linestyle=dotted,linecolor=bb](0.,0){2.52806}{0.}{360.}

\psarc[linecolor=pale](3.83142,0){1.30336}{180}{221.709}
\psarc[linestyle=dashed](3.83142,0){1.30336}{180}{221.709}
\psarc(3.83142,0){1.30336}{138.291}{180}

\psarc(1.88637,0){0.641696}{-75.4619}{0}
\psarc[linecolor=pale](1.88637,0){0.641696}{0}{75.4619}
\psarc[linestyle=dashed](1.88637,0){0.641696}{0}{75.4619}

\psarc[linecolor=pale](0.23498,0){2.76304}{18.2914}{180}
\psarc[linestyle=dashed](0.23498,0){2.76304}{18.2914}{180}

\psarc(-0.198145,0){2.32992}{15.4619}{180}
\psarc[linecolor=pale](-0.198145,0){2.32992}{180}{344.538}
\psarc[linestyle=dashed](-0.198145,0){2.32992}{180}{344.538}

\psarc(0.23498,0){2.76304}{180}{341.709}

\psdot(2.25029,0.584693)

\rput(1.66,.6){$z$}
\rput(8,1.2){$
               z  = [-4, 2, 3, 7, 1, 5+\tfrac{1-3i}4 ]$}
 \rput(8,.6){$ \approx -3.57+0.00000585 i
             $}

\end{pspicture}
\caption{A typical image of $\Fd'$. This one contains $z$ from \e{423}}
\label{egf}
\end{figure}

A more precise location for $z$ from its continued fraction coefficients $a_0, \dots, a_n$ is given by the particular image of $\Fd'$ under $\SL(2,\Z)$ that contains it. A typical image is displayed  in Figure \ref{egf}. Since $\SL(2,\Z)$ maps the imaginary axis to a vertical line or a circle centered on $\R$, we see these images are translates of $\Fd'$ or thickened circles that narrow down to cusps at rational numbers.

\begin{cor} \la{conv4}
Suppose $z \in \C$ has continued fraction $[a_0,a_1, \dots, a_n+z_0]$. Then $z$ is in the image of $\Fd'$ under $\SL(2,\Z)$ that has cusps at $h_{n-1}/k_{n-1}$ and $h_{n}/k_{n}$.
\end{cor}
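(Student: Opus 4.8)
The plan is to read off the matrix attached to the expansion from \eqref{hk2} and simply track where it sends the two boundary points of $\Fd'$ lying on $\R\cup\{\infty\}$. Writing $M:=(\begin{smallmatrix} h_{n-1} & h_n \\ k_{n-1} & k_n \end{smallmatrix})$, equation \eqref{hk2} gives $z=Mz_0$ with $z_0\in\Fd'$ and $\det M=(-1)^n$. From the description \eqref{fdp2} of $\text{closure}(\Fd')$ as the strip $-1/2\lqs\Re(z)\lqs 1/2$ with the two disks $|z\pm 1|<1$ removed, its only contact points with $\R\cup\{\infty\}$ are the pinch point $0$ and the point $\infty$; these are precisely the two cusps of the hourglass. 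Hence the image of $\Fd'$ under any matrix of $\SL(2,\Z)$ is a tile whose cusps are the images of $0$ and $\infty$ under that matrix, and the whole problem reduces to computing those two images.

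First I would dispose of the case $n$ even, where $M\in\SL(2,\Z)$ outright. Then $z\in M\Fd'$, and the cusps of this tile are $M\cdot\infty=h_{n-1}/k_{n-1}$ and $M\cdot 0=h_n/k_n$, exactly as claimed. For $n$ odd one has $\det M=-1$, so $M\notin\SL(2,\Z)$ and a small correction is needed. Here I would exploit the symmetry $-\Fd'=\Fd'$ recorded just after \eqref{fdp}: setting $M':=M(\begin{smallmatrix} -1 & 0 \\ 0 & 1 \end{smallmatrix})=(\begin{smallmatrix} -h_{n-1} & h_n \\ -k_{n-1} & k_n \end{smallmatrix})$ gives $\det M'=-\det M=1$, while $z=Mz_0=M'(-z_0)$ with $-z_0\in\Fd'$. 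Thus $z\in M'\Fd'$, and the direct computation $M'\cdot\infty=(-h_{n-1})/(-k_{n-1})=h_{n-1}/k_{n-1}$ and $M'\cdot 0=h_n/k_n$ shows $M'\Fd'$ has the same pair of cusps. (When $z_0=0$, i.e.\ $z$ rational, the same formulas place $z$ at the cusp $h_n/k_n$ of the tile.)

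Finally I would note that in either parity the two cusps are Farey neighbors, since $|h_{n-1}k_n-h_nk_{n-1}|=|\det M|=1$; this is what makes the phrase ``the image of $\Fd'$ that has cusps at $h_{n-1}/k_{n-1}$ and $h_n/k_n$'' unambiguous, because the $\SL(2,\Z)$-translates of $\Fd'$ tile $\C$ and each tile is pinned down by its pair of cusps, in line with the discussion surrounding Figure \ref{egf}. The only genuine obstacle is the bookkeeping in the odd case, and even there it is merely a determinant-sign adjustment absorbed by the reflection symmetry of $\Fd'$; no new estimate or construction is needed beyond \eqref{hk2} and \eqref{fdp2}.
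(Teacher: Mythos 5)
Your proposal is correct and follows essentially the same route as the paper, which derives Corollary \ref{conv4} from \eqref{hk2} together with the same determinant-sign adjustment used in the proof of Proposition \ref{conv} (there by inserting $\left(\begin{smallmatrix} \pm 1 & 0 \\ 0 & \mp 1 \end{smallmatrix}\right)$, in your version by invoking the symmetry $-\Fd'=\Fd'$), after which the cusps of the tile are simply the images of $0$ and $\infty$, the only points where $\text{closure}(\Fd')$ meets $\R \cup \{\infty\}$ by \eqref{fdp2}. Your closing remark that $|h_{n-1}k_n - h_n k_{n-1}| = 1$ pins the tile down matches the paper's surrounding discussion of the tiling in Figure \ref{tile}, so nothing is missing.
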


In this way, the coefficients $a_0, \dots, a_n$ determine where $z$ lies in the tiling of Figure \ref{tile}, giving an address for each tile. These tiles accumulate at the real line and the discs there in the figure indicate the limit of its accuracy. 

\begin{prop} \la{cplx}
Let $n\gqs 1$, let  $a_j$ be integers that are positive after the first, and suppose $z_0 \in \Fd'$. Then we have, in the  notation of \e{hk2} and with $\phi:=(1+\sqrt{5})/2$,
\begin{equation} \la{top}
  \Bigl| [a_0,a_1, \dots, a_n+z_0] -  [a_0,a_1, \dots, a_n] \Bigr| \lqs \frac{1}{k_n k_{n-1}} < \frac 4{\phi^{2n}}.
\end{equation}
\end{prop}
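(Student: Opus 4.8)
The plan is to evaluate the left-hand side of \e{top} in closed form and then bound it in two stages. First I would apply \e{hk2} with $z=z_0$ and with $z=0$, so that both continued fractions are values of the same matrix from \e{hk2}, whose determinant is $(-1)^n$. Subtracting over the common denominator $k_n(k_{n-1}z_0+k_n)$, the cross term collapses via the determinant relation $k_nh_{n-1}-h_nk_{n-1}=(-1)^n$ to give
\begin{equation*}
[a_0,\dots,a_n+z_0]-[a_0,\dots,a_n]=\frac{(-1)^n z_0}{k_n(k_{n-1}z_0+k_n)}.
\end{equation*}
Thus the quantity to estimate is exactly $|z_0|/\bigl(k_n\,|k_{n-1}z_0+k_n|\bigr)$; when $z_0=0$ this vanishes and \e{top} is trivial, so I assume $z_0\neq 0$.

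For the first inequality of \e{top} it then suffices to show $k_{n-1}|z_0|\lqs |k_{n-1}z_0+k_n|$. Squaring and writing $x=\Re(z_0)$, the right-hand side equals $k_{n-1}^2|z_0|^2+2k_{n-1}k_nx+k_n^2$, so the inequality reduces to $2k_{n-1}x+k_n\gqs 0$. This is where the shape of $\Fd'$ is used: by \e{fdp2} every $z_0\in\Fd'$ satisfies $\Re(z_0)\gqs -1/2$, while the recursion \e{hk} gives $k_n=a_nk_{n-1}+k_{n-2}\gqs k_{n-1}$ because $a_n\gqs 1$ and $k_{n-2}\gqs 0$. Hence $2k_{n-1}x+k_n\gqs -k_{n-1}+k_n\gqs 0$, as required.

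For the second inequality I would invoke the stated bound $k_m\gqs f_{m+1}$, so that $k_nk_{n-1}\gqs f_{n+1}f_n$, and reduce the claim to $4f_nf_{n+1}>\phi^{2n}$. The Binet formula $f_m=(\phi^m-\psi^m)/\sqrt5$ with $\psi=-1/\phi$, together with $\phi\psi=-1$ and $\phi+\psi=1$, gives the identity
\begin{equation*}
5f_nf_{n+1}=\phi^{2n+1}-\phi^{-(2n+1)}-(-1)^n,
\end{equation*}
so $4f_nf_{n+1}>\phi^{2n}$ is equivalent to $\tfrac{2\sqrt5-3}{4}\phi^{2n}>\phi^{-(2n+1)}+(-1)^n$. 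For $n\gqs 2$ the right-hand side is below $2$, while the left-hand side is increasing in $n$ and already equals $(5\sqrt5+9)/8>2$ at $n=2$; for $n=1$ one checks directly that $k_1k_0=a_1\gqs 1$, whence $1/(k_1k_0)\lqs 1<4/\phi^2$ since $\phi^2<4$.

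I expect the crux to be the first, geometric, inequality rather than the algebra. Everything hinges on the denominator $k_{n-1}z_0+k_n$ not being small relative to $k_{n-1}|z_0|$, and this would fail if $z_0$ could stray far to the left; the essential inputs are therefore the precise constraint $\Re(z_0)\gqs -1/2$ from \e{fdp2} together with the monotonicity $k_n\gqs k_{n-1}$. The Fibonacci estimate is otherwise routine, except that the uniform bound degrades for small $n$, so the case $n=1$ must be treated separately, as above.
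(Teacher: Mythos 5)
Your proof is correct and follows the same overall route as the paper's: both extract the closed form $(-1)^n z_0/\bigl(k_n(k_{n-1}z_0+k_n)\bigr)$ from the determinant $(-1)^n$ (the paper packages this as the identity \e{id}), reduce the first inequality of \e{top} to a geometric fact about $\Fd'$, and finish with $k_nk_{n-1}\gqs f_nf_{n+1}>\phi^{2n}/4$. The execution of the middle step differs, though only cosmetically: the paper sets $w=1/z_0$, which stays in $\Fd'$, and deduces $|w+r|\gqs r$ for $r=k_{n-1}/k_n\in(0,1]$ by the triangle inequality from $|w+1|\gqs 1$ in \e{fdp2}; you work with $z_0$ directly, square the modulus, and use $\Re(z_0)\gqs-1/2$ together with $k_n\gqs k_{n-1}$ from \e{hk}. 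These are the same fact in two guises, since $|1/z_0+1|\gqs 1$ is equivalent to $\Re(z_0)\gqs-1/2$, and your monotonicity $k_n\gqs k_{n-1}$ plays exactly the role of the paper's $r\lqs 1$. Your Binet computation supplies a full proof of the Fibonacci estimate that the paper merely asserts, which is a welcome addition; the only inefficiency is your separate treatment of $n=1$, which is not actually needed, since for odd $n$ the right-hand side $\phi^{-(2n+1)}+(-1)^n$ of your reduced inequality is negative and the general argument already applies.
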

\begin{proof}
Verify the identity
\begin{equation} \la{id}
  \frac{a z+b}{c z+d} = \frac ac -\frac{(-1)^n}{c(c z+d)}  \qquad \text{when} \qquad \det \begin{pmatrix}a & b \\ c & d \end{pmatrix} = (-1)^n.
\end{equation}
We can assume $z_0 \neq 0$ and let $w=1/z_0 \in \Fd'$. Then by \e{hk2} and \e{id}, the left side of \e{top} is bounded by
\begin{equation}\label{id2}
  \frac{1}{|k_n(k_n w+k_{n-1})|} = \frac 1{k_n^2 |w+k_{n-1}/k_n|}  \qquad \text{for} \qquad 0< k_{n-1}/k_n \lqs 1.
\end{equation}
Also $|w-(-1)|\gqs 1$ for $w\in \Fd'$ by \e{fdp2}. Hence, for $0 \lqs r\lqs 1$,
\begin{equation*}
  |w+r|=|w+1+r-1|\gqs |w+1|-|1-r|\gqs 1-(1-r)=r,
\end{equation*}
by the triangle inequality, so that \e{id2} is at most $1/(k_n k_{n-1})$. Then $k_n k_{n-1} \gqs f_n f_{n+1}>\phi^{2n}/4$.
\end{proof}

We obtain the following corollary, giving a bound on the length of continued fractions. See \cite[Sect. 3]{DFV} for the same treatment. 
\begin{cor} 
If $z \in \C$ has $|\Im (z) | > 4/\phi^{2n}$ for $n\gqs 1$ then $z$ has continued fraction $[a_0,a_1, \dots, a_k+z_0]$ for $k \lqs n-1$.
\end{cor}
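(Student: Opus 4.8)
The plan is to argue by contraposition and simply read off the imaginary part from Proposition \ref{cplx}. Since $|\Im(z)| > 4/\phi^{2n} \gqs 0$, the number $z$ is nonreal, so by Theorem \ref{genc} its continued fraction terminates; write it as $z = [a_0, a_1, \dots, a_k + z_0]$ with positive integers after the first and $z_0 \in \Fd' - \{0\}$. The goal is $k \lqs n-1$, so instead I would assume $k \gqs n$ and derive a contradiction.

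Because $n \gqs 1$, the assumption $k \gqs n$ forces $k \gqs 1$, so Proposition \ref{cplx} applies with its length equal to $k$, giving
\[
  \Bigl| [a_0, a_1, \dots, a_k + z_0] - [a_0, a_1, \dots, a_k] \Bigr| < \frac{4}{\phi^{2k}}.
\]
The key point is that the convergent $[a_0, a_1, \dots, a_k]$ is the rational number $h_k/k_k$, which is real. Taking imaginary parts therefore leaves $\Im(z)$ unchanged, and bounding the modulus of the difference bounds $|\Im(z)|$:
\[
  |\Im(z)| = \bigl| \Im(z - h_k/k_k) \bigr| \lqs \Bigl| z - [a_0, a_1, \dots, a_k] \Bigr| < \frac{4}{\phi^{2k}}.
\]

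Finally, since $\phi > 1$ and $k \gqs n$, we have $4/\phi^{2k} \lqs 4/\phi^{2n}$, so $|\Im(z)| < 4/\phi^{2n}$, contradicting the hypothesis; hence $k \lqs n-1$. The whole argument is immediate once Proposition \ref{cplx} is available, and I do not expect any serious obstacle. The only step needing any care is the preliminary reduction: confirming via Theorem \ref{genc} that a nonreal $z$ has a genuine finite expansion with a well-defined length $k$ and $z_0 \in \Fd'$, so that the hypotheses of Proposition \ref{cplx} are met. This is precisely where the boundary bookkeeping (that $z_0 \neq 0$ for nonreal $z$) enters, and it is the mildest of points rather than a true difficulty.
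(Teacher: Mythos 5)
Your proof is correct and is essentially the argument the paper intends: the corollary is stated as an immediate consequence of Proposition \ref{cplx}, deduced exactly as you do by noting that the convergent $[a_0, a_1, \dots, a_k]$ is real, so that $|\Im(z)| \lqs \bigl| z - [a_0, a_1, \dots, a_k] \bigr| < 4/\phi^{2k} \lqs 4/\phi^{2n}$ whenever $k \gqs n$, contradicting the hypothesis. Your preliminary reductions — invoking Theorem \ref{genc} to get a finite expansion for nonreal $z$, and observing that $k \gqs n \gqs 1$ is what makes Proposition \ref{cplx} applicable — are both handled correctly.
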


\SpecialCoor
\psset{griddots=5,subgriddiv=0,gridlabels=0pt}
\psset{xunit=12cm, yunit=12cm, runit=12cm}
\psset{linewidth=1pt}
\psset{dotsize=5pt 0,dotstyle=*}
\begin{figure}[ht]
\centering

\caption{Tiling $\C$ with images of $\Fd'$}
\label{tile}
\end{figure}

Two complex numbers $w$ and $z$ are said to be {\em equivalent} if $w =M z$ for some $M \in \GL(2,\Z)$. The classic result of Serret from 1850, see \cite[Thm. 175]{hawr}, says that two irrational reals are equivalent if and only if their continued fractions agree after some point. Equivalence for other numbers now has a simple characterization:

\begin{prop} \la{se}
Suppose  $z, w \in \C$ 
are not irrational reals, so they have 
continued fractions 
\begin{equation} \la{zw}
  z =  [a_0,  \dots, a_m+z_0], \qquad w =  [b_0,  \dots, b_n+w_0].
\end{equation}
Then $z$ and $w$ are equivalent if and only if $w_0=\pm z_0$ or $\pm 1/z_0$.
\end{prop}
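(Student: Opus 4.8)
The plan is to transfer the whole question from $z,w$ to their terminal values $z_0,w_0$, where it becomes a statement about the fundamental domain $\Fd$. By \eqref{hk2} the convergent matrix $(\begin{smallmatrix} h_{m-1} & h_m \\ k_{m-1} & k_m\end{smallmatrix})$ sends $z_0$ to $z$ and has determinant $(-1)^m$, so it lies in $\GL(2,\Z)$; hence $z$ is equivalent to $z_0$, and likewise $w$ to $w_0$. Since equivalence is transitive, $z\sim w$ if and only if $z_0\sim w_0$, and the proposition reduces to the claim that for $z_0,w_0\in\Fd'$ one has $z_0\sim w_0$ exactly when $w_0\in\{z_0,-z_0,1/z_0,-1/z_0\}$. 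These four maps form a Klein four-group realised inside $\GL(2,\Z)$ by $I$, $(\begin{smallmatrix}-1&0\\0&1\end{smallmatrix})$, $J$ and $S$, which gives the easy direction at once: each such $w_0$ is $\GL(2,\Z)$-equivalent to $z_0$. I also record that these four maps preserve $\Fd'$, by the remark following \eqref{fdp}.

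For the converse I would argue from $\Fd$. Since $\Fd'\cap\R=\{0\}$ by \eqref{fdp2}, a real terminal value forces $z_0=0$ (the rational case); then $w_0=\tfrac bd$ is rational and in $\Fd'$, hence $w_0=0=z_0$ and we are done. Otherwise $z_0$ is nonreal, and since replacing $z_0$ by $-z_0$ alters neither its equivalence class nor its orbit under the four maps, I may assume $z_0\in\H$, so $z_0\in\Fd\cup S\Fd$ by \eqref{fdp}. Suppose $w_0=Pz_0$ with $P\in\GL(2,\Z)$. From $\Im(Pz_0)=\det(P)\,\Im(z_0)/|cz_0+d|^2$, if $w_0\in\H$ then $\det P=+1$ and $P\in\SL(2,\Z)$; applying a suitable power of $S$ to each of $z_0,w_0$ lands them at representatives $z_2,w_2\in\Fd$ in one $\SL(2,\Z)$-orbit, so the defining property of $\Fd$ forces $z_2=w_2$. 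Unwinding the (at most) two applications of $S$, and using that $S$ acts as an involution, then yields $w_0\in\{z_0,S z_0\}=\{z_0,-1/z_0\}$. If instead $w_0$ lies in the lower half plane, I run the same argument on $-w_0\in\H\cap\Fd'$ and obtain $w_0\in\{-z_0,1/z_0\}$. In every case $w_0$ lies in $\{z_0,-z_0,1/z_0,-1/z_0\}$, completing the converse.

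The main obstacle will be the boundary of $\Fd$: the corners $\rho^k$, the elliptic fixed points $i$ and $\rho^2$ with their nontrivial stabilizers, and the unit-circle overlap of $\Fd$ with $S\Fd$. The clean way past this is to rely on $\Fd$ being a \emph{strict} fundamental domain, so each orbit meets it exactly once; this delivers $z_2=w_2$ with no stabilizer ambiguity, whether the points are interior or on an edge. It then only remains to check that the coincidences forced on the boundary stay consistent with the conclusion — for example at $z_0=i$, where $-1/z_0=z_0$ and the orbit collapses to $\{i,-i\}$, or along the unit circle where the sole freedom is the choice of $S$-power — so that unwinding never produces a value outside $\{z_0,-z_0,1/z_0,-1/z_0\}$. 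Verifying these degenerate alignments, and confirming that the determinant genuinely dictates the half plane so the $\SL$ versus non-$\SL$ dichotomy is exhaustive, is the only place where real care is required.
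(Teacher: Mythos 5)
Your proposal is correct and follows essentially the same route as the paper: both reduce $z\sim w$ to $z_0\sim w_0$ via the convergent matrices of \eqref{hk2}, dispose of the rational case $z_0=0$ separately, and then normalize into the strict fundamental domain $\Fd$ (the paper by applying $(J)(S)$ factors directly, you by first negating into $\H$ and then applying a power of $S$, with a determinant computation forcing the connecting matrix into $\SL(2,\Z)$), so that uniqueness of the orbit representative in $\Fd$ forces equality and unwinding yields $w_0\in\{\pm z_0,\pm 1/z_0\}$. Your closing worries about boundary points and elliptic stabilizers are already dissolved by the strictness of $\Fd$, exactly as you note, so no extra verification is needed.
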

\begin{proof}
By \e{hk2}, the representations in  \e{zw} imply $z=A z_0$ and $w=B w_0$ for $A$, $B \in \GL(2,\Z)$. If $z$ and $w$ are equivalent then $w =M z$ and hence $w_0 = C z_0$ for  $C =B^{-1}M A$. If $z_0=0$ then $w_0=C 0 \in \Q$ implies that $w_0=0$. Similarly $w_0=0$ implies $z_0=0$. Otherwise, apply $J$ and $S$ as necessary to move $w_0$ and $z_0$ into $\Fd$: $w_0'=(J)(S)w_0 \in \Fd$, $z_0'=(J)(S)z_0 \in \Fd$. Hence $w_0'= C' z_0'$ with $C'$ necessarily in $\SL(2,\Z)$. Therefore $w_0'=z_0'$ and so $w_0=\pm z_0$ or $w_0=\pm 1/z_0$, as we wanted to show.
Reversing this reasoning gives the converse.
\end{proof}

As a special case, all rationals are equivalent; this is \cite[Thm. 174]{hawr}. For the three sets of the rationals, the irrational reals, and the rest of $\C$, it is easy to see that any element  from one set must be inequivalent to all elements in the other two.

\section{Reduction} \la{red}
\subsection{Lattices} \la{latt}
A two-dimensional lattice may be expressed as $\Z w_1+\Z w_2$ where $w_1$, $w_2 \in \C$ are $\R$-linearly independent. The given basis $(w_1,w_2)$ might be larger than necessary and a smaller one is found as follows. First select a non-zero $u$ in the lattice with $|u|$ minimal. Then choose $v$ with $|v|$ minimal over all  lattice elements that are $\R$-linearly independent of $u$. Then  $(u,v)$ is  a basis of the original lattice and it is called {\em minimal}.

Elementary 
arguments, as in \cite[Sect. 17.1]{gal}, show that a basis $(u,v)$  is  minimal if and only if
\begin{equation} \la{uv}
  |u| \lqs |v| \lqs |u\pm v|.
\end{equation}
With $z=v/u$ we find that \e{uv} is equivalent to
\begin{equation} \la{uvz}
  1 \lqs |z|, \qquad -1/2 \lqs \Re(z) \lqs 1/2.
\end{equation}
If  the order of the basis elements is not important, then swapping $u$ and $v$ gives $1/z$ and the desired region is seen to be the closure of $\Fd'$ from \e{fdp}, (excluding $0$). Hence we may define a lattice basis $(u,v)$ to be {\em unordered minimal} if $u/v$ is in the closure of $\Fd'$.

\begin{theorem} \la{bm}
Let $\Z w_1+\Z w_2$ be any two-dimensional lattice in $\C$. If the continued fraction representation of $w_1/w_2$ is
\begin{equation*}
  \frac{w_1}{w_2} =  [ a_0,a_1, \dots, a_{r-1}, a_r+ z_0 ] = \begin{pmatrix} h_{r-1} & h_r \\ k_{r-1} & k_r \end{pmatrix} z_0,
\end{equation*}
with \e{hk2},  put
\begin{equation} \la{invm}
 \begin{pmatrix} u \\ v \end{pmatrix} := \begin{pmatrix} h_{r-1} & h_r \\ k_{r-1} & k_r \end{pmatrix}^{-1}  \begin{pmatrix} w_1 \\ w_2 \end{pmatrix}
  = (-1)^r \begin{pmatrix} k_r & -h_r \\ -k_{r-1} & h_{r-1} \end{pmatrix}  \begin{pmatrix} w_1 \\ w_2 \end{pmatrix}.
\end{equation}
Then $(u,v)$ is an unordered minimal basis of the lattice.
\end{theorem}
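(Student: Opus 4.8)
The plan is to exploit the fact that the matrix
$M := \begin{pmatrix} h_{r-1} & h_r \\ k_{r-1} & k_r \end{pmatrix}$
lies in $\GL(2,\Z)$, so that passing from $(w_1,w_2)$ to $(u,v)$ is a unimodular change of basis, and then to identify the ratio $u/v$ with the tail $z_0$ of the continued fraction, which lands in $\Fd'$ by Theorem \ref{genc}. Membership of $u/v$ in the closure of $\Fd'$ is precisely the defining property of an unordered minimal basis recorded after \eqref{uvz}.

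First I would note that $M$ has integer entries and determinant $(-1)^r$, as observed after \eqref{hk2}, so $M \in \GL(2,\Z)$ and $M^{-1}$ is again an integer matrix of determinant $\pm 1$. Since $(u,v)^{\mathrm T}=M^{-1}(w_1,w_2)^{\mathrm T}$, both $u$ and $v$ are $\Z$-linear combinations of $w_1,w_2$, and conversely $(w_1,w_2)^{\mathrm T}=M(u,v)^{\mathrm T}$ expresses $w_1,w_2$ as $\Z$-linear combinations of $u,v$; hence $\Z u+\Z v=\Z w_1+\Z w_2$, so $(u,v)$ is indeed a basis of the same lattice. Because $M^{-1}$ is a real invertible matrix and $w_1,w_2$ are $\R$-linearly independent, $u$ and $v$ are $\R$-linearly independent as well; in particular $v\neq 0$ and $u/v\notin\R$.

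Next I would compute $u/v$. From $(w_1,w_2)^{\mathrm T}=M(u,v)^{\mathrm T}$ we have $w_1=h_{r-1}u+h_r v$ and $w_2=k_{r-1}u+k_r v$, so dividing through by $v$ gives
\[
  \frac{w_1}{w_2}=\frac{h_{r-1}(u/v)+h_r}{k_{r-1}(u/v)+k_r}=M(u/v),
\]
the linear fractional action of $M$. But the hypothesis asserts $w_1/w_2=M z_0$ in exactly this sense, via \eqref{hk2}. Since $M$ acts injectively as a Möbius transformation (its determinant is nonzero), it follows that $u/v=z_0$. By Theorem \ref{genc} we have $z_0\in\Fd'$, and $\Fd'\subseteq\text{closure}(\Fd')$, so $u/v$ lies in the closure of $\Fd'$, which is precisely the condition for $(u,v)$ to be an unordered minimal basis.

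The argument is short, and the one point that needs care is the bridge between the two roles of $M$: it acts on the column vector $(u,v)^{\mathrm T}$ by ordinary matrix multiplication, but on the scalar ratio $u/v$ by a linear fractional transformation. Keeping these consistent is the only subtlety, and the passage to $\GL(2,\Z)$ rather than $\SL(2,\Z)$ is harmless because the reduction region $\text{closure}(\Fd')$ is invariant under $z\mapsto 1/z$ and $z\mapsto -z$; everything else follows directly from Theorem \ref{genc} and the unimodularity of $M$.
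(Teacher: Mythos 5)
Your proof is correct and takes essentially the same route as the paper's: unimodularity of the convergent matrix shows $(u,v)$ is a basis of the same lattice, and the identification $u/v = z_0 \in \Fd'$ (which the paper states in one line as $u/v = M^{-1}\,(w_1/w_2) = z_0$) gives unordered minimality. The care you take reconciling the matrix action on column vectors with the M\"obius action on the ratio $u/v$ is precisely the implicit content of that one-line computation, so nothing further is needed.
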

\begin{proof}
Firstly, $(u,v)$ is a  basis since the integer matrices in \e{invm} have determinant $\pm 1$. It is unordered minimal since
\begin{equation} \la{qed}
 \frac uv = \begin{pmatrix} h_{r-1} & h_r \\ k_{r-1} & k_r \end{pmatrix}^{-1}  \frac{w_1}{w_2} 
  = z_0 \in \Fd'. 
\end{equation}
\end{proof}

We can describe how the continued fraction algorithm changes the basis at each stage of this reduction, ignoring the $a_j$ coefficients. Starting with $(u,v)=(w_1,w_2)$, the step
\begin{equation}\label{swa}
(u,v) \mapsto (v, u-\lfloor \Re(u/v) \rfloor v)
\end{equation}
is repeated until $|u|\lqs |v|$ or $|v-u|$. Then $(u,v)$ or $(u,v-u)$ is the desired unordered minimal basis. In terms of $z=v/u$, this means repeating $z \mapsto 1/z-\lfloor \Re(1/z) \rfloor$ until $1\lqs |z|$ or $|z-1|$, with then $z$ or $z-1$ in the closure of $\Fd'$.
In \cite[Sect. 2]{DFV} they call this the Standard Gaussian Algorithm, with the only difference of using the condition $1/2<|z-1/2|$ to exit from the loop. This can finish a step earlier, with a further transformation  needed to get $z$ and the basis into the wanted form. See Section \ref{fur} where equivalent presentations of our complex continued fraction algorithm are given. 

The usual version of Gauss lattice reduction, as described in \cite[p. 42]{lll} for example, works in the following way. Let $\lfloor x \rceil$ denote the closest integer to $x\in \R$, so equaling $\lfloor x+1/2 \rfloor$ or similar. Starting with $(u,v)=(w_1,w_2)$, the step
\begin{equation}\label{swa2}
(u,v) \mapsto (v, u-\lfloor \Re(u/v) \rceil v)
\end{equation}
is repeated until $|u|\lqs |v|$. Then $(u,v)$  is a minimal basis. In terms of $z=v/u$, this means repeating $z \mapsto 1/z-\lfloor \Re(1/z) \rceil$ until $1\lqs |z|$. An alternative continued fraction could be built from this, though the resulting coefficients $a_j$ would be both positive and negative.

\subsection{Quadratic forms}

An integral binary quadratic form is a polynomial $q(x,y)=a x^2 + b xy + c y^2$ with integer coefficients $a, b, c$. It may be written $q=[a, b, c]$, and its discriminant is $D=b^2-4ac$.  The group $\SL(2,\Z)$ acts on forms on the right:
\begin{equation}\label{qm}
q|M := q(\alpha x +\beta y, \g x+\delta y) \qquad \text{for} \qquad M = \begin{pmatrix} \alpha &\beta \\ \g &\delta \end{pmatrix},
\end{equation}
preserving the discriminant. Two  forms $q_1$, $q_2$ are equivalent if $q_2=q_1|M$ for some $M \in \SL(2,\Z)$. Lagrange 
and Gauss studied form equivalence classes. To understand when two forms are equivalent, and to count the number of classes for each discriminant, they used a reduction method. 
Gauss refined Lagrange's work slightly and his reduction criterion for a form $[a,b,c]$ of discriminant $D<0$ with $a, c>0$, (i.e. positive definite),  may be presented as:
\begin{equation}\label{lag}
   |b| \lqs a \lqs c, \quad \text{and if  $|b|= a$ or $a = c$ then $b\gqs 0$}.
\end{equation}
They provided simple procedures to convert any form into a reduced one, laying the foundation of this theory, and
there is exactly one form satisfying \e{lag} in each class.  An equivalent way to state this condition uses the zeros of $q(x,1)$, where we define $z_q$ to be the zero in $\H$. This equals $(-b+\sqrt{|D|} i)/(2a)$. Gauss's  criterion  then becomes simply $z_q \in \Fd$.

We will need
\begin{equation}\label{roo}
  p=q|M \quad \implies \quad z_{p}=M^{-1} z_{q}.
\end{equation}

\begin{theorem} \la{bm2}
Let $q$ be a quadratic form $[a,b,c]$ of discriminant $D<0$ with $a, c>0$.  If the continued fraction representation of its root $z_q$ is
$[ a_0,a_1, \dots, a_{m-1}, a_m+ z_0 ]$
then, for an appropriate choice of $(S)$,
\begin{equation} \la{reda}
  q|M  \quad \text{ is Gauss reduced for } \quad M:=  L^{a_0}R^{a_1} \cdots [L\text{ or }R]^{a_m} (S).
\end{equation}
\end{theorem}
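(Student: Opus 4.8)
The plan is to reduce the claim to the geometric reformulation of Gauss's criterion already recorded in the excerpt, namely that a positive definite form is Gauss reduced precisely when its root lies in $\Fd$, combined with Lemma \ref{fff} and the transformation rule \e{roo}.

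First I would note that since $D<0$ and $a>0$, the root $z_q=(-b+\sqrt{|D|}\,i)/(2a)$ genuinely lies in $\H$, so Lemma \ref{fff} applies to $z_q$ and its continued fraction $[a_0,a_1,\dots,a_m+z_0]$. That lemma produces an $\alpha\in\{0,1\}$ and a point $z_2\in\Fd$ with $z_q=Mz_2$, where
\[
M=L^{a_0}R^{a_1}\cdots[L\text{ or }R]^{a_m}S^\alpha.
\]
I would then take the ambiguous factor $(S)$ in the statement to be exactly this $S^\alpha$, so that the matrix $M$ of the theorem coincides with the one supplied by Lemma \ref{fff}.

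Next I would apply \e{roo}: writing $p:=q|M$, we get $z_p=M^{-1}z_q=M^{-1}Mz_2=z_2\in\Fd$. Thus the root of the transformed form is precisely the point $z_2$ in $\Fd$. It remains only to confirm that Gauss's geometric criterion may be invoked for $p$. Since $M\in\SL(2,\Z)$, the action \e{qm} preserves the discriminant, so $p$ still has discriminant $D<0$; and as $M$ effects an invertible integral change of variables, $p$ is again positive definite, in particular with positive leading coefficient $p(1,0)>0$. Hence $p$ is a positive definite form whose root lies in $\Fd$, which by the criterion recorded just before \e{roo} means exactly that $p=q|M$ is Gauss reduced.

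There is no serious obstacle here beyond bookkeeping, the whole argument being a concatenation of three facts already in hand. The one point demanding care is the orientation of the action: one must use \e{roo} in the form $z_{q|M}=M^{-1}z_q$, so that $M^{-1}$ undoes the factorization $z_q=Mz_2$ and returns the root to $\Fd$ rather than mapping it out. A secondary subtlety worth a sentence is that the boundary conventions built into $\Fd$ in \e{fdb} are precisely those matching the tie-breaking clauses of \e{lag} (the rules for $|b|=a$ and $a=c$), so that membership in $\Fd$ including its boundary corresponds to genuine Gauss reduction and not merely reduction up to boundary ambiguity; this is already subsumed in the equivalence of $z_q\in\Fd$ with \e{lag} noted earlier.
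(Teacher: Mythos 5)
Your proof is correct and follows essentially the same route as the paper's: apply Lemma \ref{fff} to factor $z_q = Mz_2$ with $z_2\in\Fd$, then use \e{roo} to conclude $z_{q|M}=M^{-1}z_q=z_2\in\Fd$, which is Gauss's criterion. Your additional remarks on preservation of the discriminant and positive definiteness, and on the boundary conventions matching \e{lag}, are correct bookkeeping that the paper leaves implicit.
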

\begin{proof}
 By Lemma \ref{fff} we have $z_q =M z_2$ for $M$ in the form in \e{reda} and $z_2 \in \Fd$. Let $p := q|M$. Then by \e{roo} 
\begin{equation} \la{prc}
  z_{p}= M^{-1}z_q =z_2 \in \Fd.
\end{equation}
\end{proof}

The situation for forms $q$ of positive, non-square discriminant $D$ is similar, but with the key difference that the reduced forms in an equivalence class naturally form finite cycles.  The root $z_q$ of $q$ is now a real quadratic irrational and its usual continued fraction $[a_0, a_1, a_2, \dots ]$ is infinite and becomes periodic. Let
\begin{equation} \la{redb}
  q_m := q| L^{a_0}R^{a_1} \cdots [L\text{ or }R]^{a_m},
\end{equation}
just using the first $m+1$ coefficients. Then this sequence of forms also becomes periodic as $m$ increases, giving a cycle of reduced forms. 
Two forms with the same positive, non-square discriminant are equivalent exactly when their cycles of reduced forms are the same.
We are describing the {\em simple reduction} of \cite[Sect. 6.3]{OStop} where these results may be found. The reduction method used by Gauss in this case was a little different, see  \cite[Sect. 7.1]{OStop}.

With \e{reda} and \e{redb} it is clear that our continued fraction for the complex root plays the same role as the usual continued fraction for the real root, giving the sequence of $L$s and $R$s to reach the reduced forms. In fact, it was looking for a continued fraction interpretation of \e{reda} that led to the complex continued fractions in \cite[Sect. 2]{OStop}. On Conway's topograph each quadratic form corresponds to a directed edge on a tree in the plane where all vertices have degree $3$. There the $L$s and $R$s correspond to turning sequences of Lefts and Rights on the tree. In Conway's terminology, the reduced form is at the topograph {\em well} when $D<0$, and the reduced cycle is on the periodic {\em river} when $D>0$. These techniques also work for square $D$, with the reduced form at a {\em lake}. See  \cite[Sects. 6, 7]{OStop} for details of all this.

\subsection{Connections to the literature}
Is there a link between reducing lattices and reducing quadratic forms with negative discriminant? Gauss noted that there was in an 1831 review of a paper on ternary forms, see \cite[p. 131]{mat}. To give the link simply, associate a form $q$ with the lattice basis $(z_q,1)$, recalling its root $z_q \in \H$. Writing bases vertically, Theorem \ref{bm} finds $N \in \text{GL}(2,\Z)$ so that $N (\begin{smallmatrix} z_q  \\ 1 \end{smallmatrix})$ is unordered minimal. More precisely, $N z_q \in \Fd'$ by \e{qed}. Including $S$ and $J$ if necessary, there is then a matrix $(S)(J)N$ so that  $(S)(J)N z_q \in \Fd$. Let $M\in \SL(2,\Z)$ be the inverse of this matrix and put $p=q|M$. Then $p$ is the reduced form we are looking for since, with \e{roo},
$
  z_p = M^{-1}  z_q \in \Fd$. In general, a positive definite quadratic form corresponds to the squared lengths of vectors in a lattice.
See also Stange's notes \cite[Sects. 2.7 -- 2.9]{sta}. They overlap with our work here in various ways, as we will see, pointing to further avenues of research.

It was Dirichlet, in the work \cite{dir} from 1854, who first employed continued fractions of real roots to simplify Gauss's quadratic form reduction method for  positive discriminants. Earlier, in the 1850 paper \cite{dir50}, he examined form reduction in the negative discriminant case and used a lattice reduction method in \cite[Sects.~1 -- 4]{dir50}, following Gauss's 1831 outline. This is equivalent to our description in the previous paragraph; see the discussion in \cite[pp. 124 -- 131]{mat}.

As we have seen, the continued fraction algorithm for complex numbers we are studying is essentially the same as the Standard Gaussian Algorithm investigated by Daud\'e,  Flajolet and  Vall\'ee in \cite{DFV}, and they  provide a detailed analysis of its behavior and complexity.
For example they show that, starting with  $z$ in the disc with diameter $[0,1]$, the probability that the algorithm needs 4 or more iterations is $\approx 0.01027$, while the average number of iterations is $\approx 1.35113$. The authors also  explicitly note the continued fractions for complex numbers the algorithm produces, though they do not seem to have followed up this direction.


\section{Generators and relations} \la{gen}

Recall the matrices $T, S, U, L, R \in \SL(2,\Z)$ from \e{tj}, \e{su}, \e{lru}.
The group $\SL(2,\Z)$ has the well-known abstract presentation
\begin{equation*}
  \left\langle s, u \, \left | \, s^4=1, s^2=u^3 \right.\right\rangle,
\end{equation*}
which is realized  in matrices with $s \mapsto S$ and $u  \mapsto  U$.
The next result, see for example 
 \cite[Thm. 5.9]{aig}, reflects this.

\begin{theorem} \la{us}
Every $M \in \SL(2,\Z)$ has a unique expression as
\begin{equation}\label{ueb}
   \pm(S) \quad \text{or} \quad \pm (S) U^{e_1} S U^{e_2} S \cdots S U^{e_n}(S),
\end{equation}
with $n\gqs 1$ and each $e_j$ equaling $1$ or $2$. As usual $(S)$ here means $S$ or $I$.
\end{theorem}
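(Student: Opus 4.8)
The plan is to deduce Theorem~\ref{us} from the uniqueness of continued fractions already established (Theorems~\ref{cfhw}, \ref{corn2}, \ref{main}), using the identities
\[
  L = -US, \qquad R = -U^2S
\]
in $\SL(2,\Z)$, which are verified by a one-line matrix product from $S^2=U^3=-I$. These let me pass freely between words in $L,R$ and words in $U,S$: one has $US=-L$, $U^2S=-R$, and, since $U^{-1}=-U^2$ and $S^{-1}=-S$, also $L^{-1}=-SU^2$ and $R^{-1}=-SU$. The upshot is that an alternating $U,S$-word is, up to an overall sign and optional flanking $S$'s, exactly an $L,R$-word, and the latter are governed by the continued fraction dictionary \e{hk2}. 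Throughout I would work modulo the center $\{\pm I\}$, since that kernel is exactly what the leading $\pm$ accounts for: each class in $\PSL(2,\Z)$ has precisely two $\SL$-lifts differing by sign.

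\textbf{Existence.} I would anchor existence in the algorithm. Fix $\tau$ in the interior of $\Fd$, so that its $\PSL(2,\Z)$-stabilizer is trivial. By Theorem~\ref{genc} the point $M\tau$ has a continued fraction $[a_0,\dots,a_n+z_0]$, and by Lemma~\ref{fff} we may write $M\tau = N z_2$ with $z_2\in\Fd$ and $N=L^{a_0}R^{a_1}\cdots[L\text{ or }R]^{a_n}S^{\alpha}$. Then $N^{-1}M$ carries $\tau$ to the point $z_2\in\Fd$; since $\tau$ is interior and $\Fd$ meets each orbit exactly once, $N^{-1}M\tau=\tau$, whence $N^{-1}M=\pm I$ and $M=\pm N$. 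Substituting the identities above into $N$ and using $S^2=U^3=-I$ to pull signs to the front and merge adjacent factors from the same cyclic subgroup rewrites $\pm N$ in the stated alternating shape $\pm(S)U^{e_1}S\cdots SU^{e_n}(S)$ (or $\pm(S)$). This is routine rewriting, so I would present it tersely; the only content is that $S,U$ generate, which is already recorded in the paper.

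\textbf{Uniqueness (the crux) and the main obstacle.} Suppose two normal forms represent the same matrix. Using $US=-L$ and $U^2S=-R$, the ``core'' satisfies
\[
  U^{e_1}SU^{e_2}S\cdots U^{e_n}S = (-1)^n\,L_{e_1}L_{e_2}\cdots L_{e_n},
\]
where $L_{e_j}=L$ if $e_j=1$ and $L_{e_j}=R$ if $e_j=2$. Hence, after absorbing the optional leading and trailing $S$'s and the overall sign, the asserted equality becomes an equality of two signed $L,R$-words (possibly flanked by a single $S$). A product of $L$'s and $R$'s is a matrix with nonnegative entries, and such words are freely determined by the matrix they represent --- this is the matrix form of the uniqueness in Theorem~\ref{cfhw} read through \e{hk2}, sharpened by the canonical conditions so that the ambiguity $[4,3]=[4,2,1]$ does not intervene. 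Matching word length and letters then forces $n=m$ and $e_j=f_j$, after which comparing signs and flags forces the remaining data to agree.

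The main obstacle is precisely the bookkeeping of the two $(S)$ flags together with the invisible central sign: one cannot directly equate two \emph{pure} $L,R$-words, and one must rule out ``cross'' coincidences, e.g.\ a form carrying a leading $S$ equalling one that does not. The clean way I would handle this is to apply both sides to a generic base point and compare images through the fan dictionary of Proposition~\ref{conv} and Corollary~\ref{fgc}: the leading $S$ flips the configuration across the imaginary axis while the sign is invisible in $\PSL(2,\Z)$, so tracking which fan the image lands in pins down each flag and each $e_j$ in turn. I expect this case analysis to run in close parallel to the four-case argument in the proof of Theorem~\ref{corn2}, and I would model it on that proof.
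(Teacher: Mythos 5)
Your proposal is sound, but you should know that the paper never proves Theorem~\ref{us} at all: it is imported from the literature (\cite[Thm.~5.9]{aig}), and the paper's logic then runs in the opposite direction to yours, deducing Theorem~\ref{us2} \emph{from} Theorem~\ref{us} by a counting bijection ($2^n$ words of weight $n$ of each kind, matched via $L \mapsto US$, $R \mapsto U^2S$ up to sign). Your existence step essentially reproduces the paper's own computation following the theorem statement (there with base point $2i$ and a pre-factor $(S)$ chosen so that $\Re \gqs 0$; compare \eqref{m}), while your uniqueness step is genuinely new relative to the paper: you prove the $L,R$-word uniqueness internally --- two equal positive words, evaluated at $1$ through \eqref{hk2}, give equal continued fractions with final entry at least $2$, so Theorem~\ref{cfhw} forces the exponents to agree, exactly the device used at \eqref{lar} inside the proof of Theorem~\ref{corn2} --- and then transport back through $US=-L$, $U^2S=-R$. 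This makes Section~\ref{gen} self-contained and substantiates the introduction's claim that these continued fractions yield the unique word representations in $\SL(2,\Z)$, at the cost of flag-and-sign bookkeeping. Three details to tighten. First, $a_0=\lfloor \Re(M\tau)\rfloor$ can be negative; either normalize as the paper does (replace $M$ by $(S)M$ to force $\Re \gqs 0$) or carry out the merge $L^{-1}=-SU^2$, $U^2\cdot U^2=-U$ explicitly and check the result is still alternating. Second, comparisons against the $n=0$ forms need the observation that a nonempty nonnegative $L,R$-word is never $\pm I$ or $\pm S$: its off-diagonal entries sum to at least its length (easy induction), whereas $\pm I,\pm S$ have off-diagonal sum $0$. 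Third, the fan machinery is unnecessary and your geometric description of it is off anyway ($S$ is rotation by $\pi$ about $i$, not reflection in the imaginary axis): the positivity argument you already invoke settles the flags, since equating any of $\pm SW_2$, $\pm W_2S$, $\pm SW_2S$ to a nonnegative word $W_1$ forces two entries of $W_2$ to vanish, incompatible with $\det W_2=1$ and $W_2$ nonempty --- the same step (``by this positivity, $\delta$ must be $0$'') as in Theorem~\ref{corn2}.
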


Given an $M$, we may use our continued fractions for $\C$ to easily find the exponents  $e_j$  as follows. Let $z:=(S)M(2i) \in \H$ with the factor $(S)$ chosen to ensure $\Re(z)\gqs 0$. (Any point in $\Fd$ could be used in place of $2i$, except $i$ and $e^{2\pi i/3}$ which have non-trivial stability groups.) As a continued fraction,
\begin{equation*}
  z=[ a_0,a_1, \dots, a_{m-1}, a_m+ z_0 ] =   L^{a_0} R^{a_1} \cdots  [L\text{ or }R]^{a_m} (S) z_2
\end{equation*}
with $a_0 \gqs 0$ and $z_2 \in \Fd$ by Lemma \ref{fff}. Then
\begin{equation*}
  ((S)M)^{-1} L^{a_0} R^{a_1} \cdots  [L\text{ or }R]^{a_m} (S) z_2 =2i
\end{equation*}
for $z_2, 2i \in \Fd$ implying that $z_2=2i$ and
\begin{equation} \la{m}
   M= \pm (S) L^{a_0} R^{a_1} \cdots  [L\text{ or }R]^{a_m} (S).
\end{equation}
Since $L=-U S$ and $R=-U^2 S$ we obtain
\begin{equation*}
  M=\pm (S) \underbrace{US \cdot US  \cdots US}_{a_0} \cdot \underbrace{U^2 S \cdot U^2 S  \cdots U^2 S}_{a_1}\cdots (S),
\end{equation*}
from which  the $e_j$s in \e{ueb} can be quickly found.

We also obtain a nice expression for $M$ in terms of $S$ and positive powers of $T$ from \e{m}, using $L=T$ and $R=TST$:
\begin{align}
  M & =\pm (S) T^{a_0} \cdot \underbrace{TST \cdot TST   \cdots TST}_{a_1} \cdot T^{a_2} \cdots (S), \notag\\
  & =\pm (S) T^{b_1} S  T^{b_2}S \cdots  S  T^{b_r}(S), \la{ww}
\end{align}
on relabelling, with $b_j\gqs 1$ and $b_j \gqs 2$ for $1<j<r$. 
The expansions \e{m} and \e{ww} are also unique:

\begin{theorem} \la{us2}
Every $M \in \SL(2,\Z)$ has a unique expression as 
\begin{equation} \la{lrex}
  \pm(S) \quad \text{or} \quad \pm (S) L^{a_0} R^{a_1} \cdots  [L\text{ or }R]^{a_m} (S),
\end{equation}
for integers $m\gqs 0$,  $a_j\gqs 1$,  (with $a_0=0$ possible if $m\gqs 1$). Every $M$ also has a unique expression as
\begin{equation}  \la{tsex}
    \pm(S) \quad \text{or} \quad \pm (S) T^{b_1} S  T^{b_2}S \cdots  S  T^{b_r}(S),
\end{equation}
for integers $r\gqs 1$, $b_j\gqs 1$ and $b_j \gqs 2$ for $1<j<r$. 
\end{theorem}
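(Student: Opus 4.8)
The plan is to separate existence from uniqueness, and to reduce the uniqueness of both \eqref{lrex} and \eqref{tsex} to the uniqueness of the normal form \eqref{ueb} supplied by Theorem \ref{us}. Existence is already in hand: the discussion culminating in \eqref{m} and \eqref{ww} shows that, starting from $z=(S)M(2i)$ and its canonical continued fraction, every $M$ can be put into each of the two claimed shapes. So the whole task is to check that neither shape carries more freedom than \eqref{ueb} does.

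First I would build an explicit bijection between expressions \eqref{ueb} and expressions \eqref{lrex}. Using the relations $L=-US$ and $R=-U^2S$ recorded above (equivalently $US=-L$, $U^2S=-R$), a block $U^{e_1}SU^{e_2}S\cdots U^{e_n}$ equals $\bigl[\prod_{j=1}^n (U^{e_j}S)\bigr]S^{-1}=(-1)^{n+1}WS$, where $W=W_1\cdots W_n$ is the word with $W_j=L$ when $e_j=1$ and $W_j=R$ when $e_j=2$. Collecting $W$ into maximal runs produces exactly the alternating pattern $L^{a_0}R^{a_1}\cdots[L\text{ or }R]^{a_m}$, with the convention $a_0=0$ (permitted only when $m\gqs 1$) corresponding to $e_1=2$, i.e.\ to a word beginning with $R$. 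The wrapper matches up because $\pm(S)\,(-1)^{n+1}WS\,(S)$ absorbs the extra $S$: the pair (sign, trailing $(S)$) of \eqref{ueb} maps bijectively to the pair (sign, trailing $(S)$) of \eqref{lrex}, and the leading $(S)$ passes across unchanged, while the degenerate case $\pm(S)$ corresponds to $\pm(S)$. Since each datum of \eqref{ueb} thus corresponds to exactly one datum of \eqref{lrex} realizing the same matrix, uniqueness in Theorem \ref{us} forces uniqueness of \eqref{lrex}.

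Second, I would pass from \eqref{lrex} to \eqref{tsex} via the substitution $L=T$, $R=TST$ of \eqref{lru} followed by merging adjacent powers of $T$. A single run $R^{a_i}$ expands as $T(ST^2)^{a_i-1}ST$, and after the flanking $L$-powers are absorbed one obtains, on relabelling, $T^{b_1}ST^{b_2}S\cdots ST^{b_r}$ in which every interior $T$-power is $2$ plus the length (possibly zero) of the intervening $L$-run, so $b_j\gqs 2$ for $1<j<r$, while the two end exponents are only required to be $\gqs 1$. The inverse rewriting is forced: the number of $S$'s is $r-1$ and equals the total number of $R$-letters, an interior $b_j=2$ signals two adjacent $R$'s, and $b_j=c+2\gqs 3$ signals an intervening $L^c$. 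Hence $L=T,\ R=TST$ is a bijection between \eqref{lrex} words and \eqref{tsex} words with the stated constraints, the leading and trailing $\pm(S)$ being untouched, and uniqueness of \eqref{tsex} follows from that of \eqref{lrex}.

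The routine parts are the two algebraic identities and the sign/$(S)$ bookkeeping, which I would dispatch by the short computations indicated. I expect the only genuine care to be in the run-length matching at the boundaries: confirming that $a_0=0$ is allowed exactly when $m\gqs 1$ and corresponds to words starting with $R$, and that the interior condition $b_j\gqs 2$ in \eqref{tsex} is precisely the image of the positivity condition $a_j\gqs 1$ in \eqref{lrex}. A second, fully internal route would avoid Theorem \ref{us} altogether: from an expression \eqref{lrex} recover the leading factor as the unique $S^\sigma$ with $\Re\bigl(S^\sigma M(2i)\bigr)>0$ (using that a valid word applied to $2i$ always has positive real part while its image under $S$ has negative real part), and then read off the remaining data from the unique canonical continued fraction of $S^\sigma M(2i)$ by Theorem \ref{main}; this is more self-contained but needs that extra real-part lemma.
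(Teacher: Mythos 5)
Your proof is correct and follows essentially the same route as the paper's: both reduce uniqueness of \eqref{lrex} and \eqref{tsex} to Theorem \ref{us} via the substitutions $L=-US$, $R=-U^2S$ and $L=T$, $R=TST$, matching the wrapper data $\pm(S)\cdots(S)$ across the correspondence. The only difference is bookkeeping: the paper verifies the word-level bijections by counting ($2^n$ words of each shape per total length $n$), while you exhibit the explicit inverse decodings and track the signs directly, which is a harmless variation.
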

\begin{proof}
For $n\gqs 1$, let $W_n$ be the set of words $L^{a_0} R^{a_1} \cdots  [L\text{ or }R]^{a_m}$ with the stated restrictions on $a_j$ and with $a_0+ \cdots + a_m=n$. To see that there are $2^n$ such words, recall that the number of integer compositions of $n$ (ways to write $n$ as a sum of positive integers where the order of the summands matters) is $2^{n-1}$. We get twice $2^{n-1}$ since $a_0$ can be $0$ or not.

Replacing $L$ by $US$ and $R$ by $U^2 S$ in these words 
gives  $U^{e_1} S U^{e_2} S \cdots S U^{e_n}S$ with $e_j=1$ or $2$. There are clearly $2^n$ words in $U$ and $S$ of this form. Alternatively, replacing $L$ by $T$ and $R$ by $TST$ in each word in $W_n$ gives $T^{b_1} S  T^{b_2}S \cdots  S  T^{b_r}$ with the stated restrictions on $b_j$ and $b_1+ \cdots +b_r=n+r-1$. An exercise with compositions shows there are also $2^n$ words in $T$ and $S$ of this form.

It follows that there are bijections between the unique representations in Theorem \ref{us} and the representations here, completing the proof.
\end{proof}

Hence the continued fraction of $(S)M(2i)$ gives the unique expansions of $M$ into each of the forms \e{ueb}, \e{lrex} and \e{tsex}.
For example, let
\begin{equation*}
  M= \begin{pmatrix} 152 & 33 \\ -129 & -28 \end{pmatrix}, \qquad \text{so that} \qquad z=SM(2i)= \frac{79356 + 2 i}{93505}.
\end{equation*}
Then
\begin{equation*}
  z=[0, 1, 5, 1, 1, 1, 1, 4 -i/2]  \qquad \text{and therefore} \qquad M=-SR^1 L^5 R^1 L^1 R^1 L^1 R^4.
\end{equation*}
Converting to the other representations gives
\begin{align*}
  M & = -SU^2 S US US US US US U^2 S US U^2 S US U^2S  U^2S  U^2S  U^2S, \\
    & = -S T S T^7 S T^3 S T^3 S T^2 S  T^2 S  T^2 S  T.
\end{align*}

Closely related to $\SL(2,\Z)$ is the modular group $\PSL(2,\Z) := \SL(2,\Z)/\{\pm I\}$ with   presentation
$\langle s, u \,  | \, s^2=u^3=1 \rangle$. Theorems \ref{us} and \ref{us2} are easily also true for $\PSL(2,\Z)$,
with matrices now representing elements $\pm M$.

\section{Cutting sequences} \la{scu}
Let $\SL(2,\Z)$ act on $\mathbb I:=\{y i\, : \,y>0\}$ to produce the hyperbolic geodesics in $\H$  shown in Figure \ref{far}, called Farey edges since their endpoints  are Farey neighbors. 
This gives the  tiling of $\H$ into Farey triangles. 
\SpecialCoor
\psset{griddots=5,subgriddiv=0,gridlabels=0pt}
\psset{xunit=5cm, yunit=5cm, runit=5cm}
\psset{linewidth=0.3pt}
\psset{dotsize=7pt 0,dotstyle=*}
\begin{figure}[ht]
\centering
\begin{pspicture}(-0.7,-0.12)(1.9,0.86) 

\psset{arrowscale=1.4,arrowinset=0.3,arrowlength=1.1}
\newrgbcolor{light}{0.8 0.8 1.0}
\newrgbcolor{pale}{1 0.7 1}
\newrgbcolor{pale}{1 0.7 0.4}
\newrgbcolor{pale}{0.9179 0.7539 0.5781}
\newrgbcolor{pale}{0.996094, 0.71875, 0.617188}

\newrgbcolor{red}{0 0 1.0}
\newrgbcolor{red}{0.5 0.5 1.0}

\psline(-0.7,0)(1.9,0)
\psline(0,0)(0,0.76)
\psline(1,0)(1,0.76)

\rput(-0.5,0.35){%
        \begin{pspicture}(0,0)(1,0.7)
\psarc(0.5,0){0.5}{0}{180}

\psarc(0.25,0){0.25}{0}{180}

\psarc(0.75,0){0.25}{0}{180}

\psarc(0.166667,0){0.166667}{0}{180}

\psarc(0.416667,0){0.0833333}{0}{180}

\psarc(0.583333,0){0.0833333}{0}{180}

\psarc(0.833333,0){0.166667}{0}{180}

\psarc(0.125,0){0.125}{0}{180}

\psarc(0.291667,0){0.0416667}{0}{180}

\psarc(0.366667,0){0.0333333}{0}{180}

\psarc(0.45,0){0.05}{0}{180}

\psarc(0.55,0){0.05}{0}{180}

\psarc(0.633333,0){0.0333333}{0}{180}

\psarc(0.708333,0){0.0416667}{0}{180}

\psarc(0.875,0){0.125}{0}{180}

\psarc(0.1,0){0.1}{0}{180}

\psarc(0.225,0){0.025}{0}{180}

\psarc(0.267857,0){0.0178571}{0}{180}

\psarc(0.309524,0){0.0238095}{0}{180}

\psarc(0.354167,0){0.0208333}{0}{180}

\psarc(0.3875,0){0.0125}{0}{180}

\psarc(0.414286,0){0.0142857}{0}{180}

\psarc(0.464286,0){0.0357143}{0}{180}

\psarc(0.535714,0){0.0357143}{0}{180}

\psarc(0.585714,0){0.0142857}{0}{180}

\psarc(0.6125,0){0.0125}{0}{180}

\psarc(0.645833,0){0.0208333}{0}{180}

\psarc(0.690476,0){0.0238095}{0}{180}

\psarc(0.732143,0){0.0178571}{0}{180}

\psarc(0.775,0){0.025}{0}{180}

\psarc(0.9,0){0.1}{0}{180}

\psarc(0.0833333,0){0.0833333}{0}{180}

\psarc(0.183333,0){0.0166667}{0}{180}

\psarc(0.211111,0){0.0111111}{0}{180}

\psarc(0.236111,0){0.0138889}{0}{180}

\psarc(0.261364,0){0.0113636}{0}{180}

\psarc(0.279221,0){0.00649351}{0}{180}

\psarc(0.292857,0){0.00714286}{0}{180}

\psarc(0.316667,0){0.0166667}{0}{180}

\psarc(0.348485,0){0.0151515}{0}{180}

\psarc(0.369318,0){0.00568182}{0}{180}

\psarc(0.408333,0){0.00833333}{0}{180}

\psarc(0.422619,0){0.00595238}{0}{180}

\psarc(0.436508,0){0.00793651}{0}{180}

\psarc(0.472222,0){0.0277778}{0}{180}

\psarc(0.527778,0){0.0277778}{0}{180}

\psarc(0.563492,0){0.00793651}{0}{180}

\psarc(0.577381,0){0.00595238}{0}{180}

\psarc(0.591667,0){0.00833333}{0}{180}

\psarc(0.630682,0){0.00568182}{0}{180}

\psarc(0.651515,0){0.0151515}{0}{180}

\psarc(0.683333,0){0.0166667}{0}{180}

\psarc(0.707143,0){0.00714286}{0}{180}

\psarc(0.720779,0){0.00649351}{0}{180}

\psarc(0.738636,0){0.0113636}{0}{180}

\psarc(0.763889,0){0.0138889}{0}{180}

\psarc(0.788889,0){0.0111111}{0}{180}

\psarc(0.816667,0){0.0166667}{0}{180}

\psarc(0.916667,0){0.0833333}{0}{180}

\psarc(0.0714286,0){0.0714286}{0}{180}

\psarc(0.154762,0){0.0119048}{0}{180}

\psarc(0.174242,0){0.00757576}{0}{180}

\psarc(0.190909,0){0.00909091}{0}{180}

\psarc(0.226496,0){0.0042735}{0}{180}

\psarc(0.240385,0){0.00961538}{0}{180}

\psarc(0.303846,0){0.00384615}{0}{180}

\psarc(0.320513,0){0.0128205}{0}{180}

\psarc(0.345238,0){0.0119048}{0}{180}

\psarc(0.36039,0){0.00324675}{0}{180}

\psarc(0.449495,0){0.00505051}{0}{180}

\psarc(0.477273,0){0.0227273}{0}{180}

\psarc(0.522727,0){0.0227273}{0}{180}

\psarc(0.550505,0){0.00505051}{0}{180}

\psarc(0.63961,0){0.00324675}{0}{180}

\psarc(0.654762,0){0.0119048}{0}{180}

\psarc(0.679487,0){0.0128205}{0}{180}

\psarc(0.696154,0){0.00384615}{0}{180}

\psarc(0.759615,0){0.00961538}{0}{180}

\psarc(0.773504,0){0.0042735}{0}{180}

\psarc(0.809091,0){0.00909091}{0}{180}

\psarc(0.825758,0){0.00757576}{0}{180}

\psarc(0.845238,0){0.0119048}{0}{180}

\psarc(0.928571,0){0.0714286}{0}{180}

\psarc(0.0625,0){0.0625}{0}{180}

\psarc(0.133929,0){0.00892857}{0}{180}

\psarc(0.310096,0){0.00240385}{0}{180}

\psarc(0.322917,0){0.0104167}{0}{180}

\psarc(0.458042,0){0.0034965}{0}{180}

\psarc(0.480769,0){0.0192308}{0}{180}

\psarc(0.519231,0){0.0192308}{0}{180}

\psarc(0.541958,0){0.0034965}{0}{180}

\psarc(0.677083,0){0.0104167}{0}{180}

\psarc(0.689904,0){0.00240385}{0}{180}

\psarc(0.866071,0){0.00892857}{0}{180}

\psarc(0.9375,0){0.0625}{0}{180}

\psarc(0.0555556,0){0.0555556}{0}{180}

\psarc(0.118056,0){0.00694444}{0}{180}

\psarc(0.464103,0){0.0025641}{0}{180}

\psarc(0.483333,0){0.0166667}{0}{180}

\psarc(0.516667,0){0.0166667}{0}{180}

\psarc(0.535897,0){0.0025641}{0}{180}

\psarc(0.881944,0){0.00694444}{0}{180}

\psarc(0.944444,0){0.0555556}{0}{180}

\psarc(0.05,0){0.05}{0}{180}

\psarc(0.105556,0){0.00555556}{0}{180}

\psarc(0.468627,0){0.00196078}{0}{180}

\psarc(0.485294,0){0.0147059}{0}{180}

\psarc(0.514706,0){0.0147059}{0}{180}

\psarc(0.531373,0){0.00196078}{0}{180}

\psarc(0.894444,0){0.00555556}{0}{180}

\psarc(0.95,0){0.05}{0}{180}

\psarc(0.0454545,0){0.0454545}{0}{180}

\psarc(0.0954545,0){0.00454545}{0}{180}

\psarc(0.472136,0){0.00154799}{0}{180}

\psarc(0.486842,0){0.0131579}{0}{180}

\psarc(0.513158,0){0.0131579}{0}{180}

\psarc(0.527864,0){0.00154799}{0}{180}

\psarc(0.904545,0){0.00454545}{0}{180}

\psarc(0.954545,0){0.0454545}{0}{180}

\psarc(0.0416667,0){0.0416667}{0}{180}

\psarc(0.0871212,0){0.00378788}{0}{180}

\psarc(0.474937,0){0.00125313}{0}{180}

\psarc(0.488095,0){0.0119048}{0}{180}

\psarc(0.511905,0){0.0119048}{0}{180}

\psarc(0.525063,0){0.00125313}{0}{180}

\psarc(0.912879,0){0.00378788}{0}{180}

\psarc(0.958333,0){0.0416667}{0}{180}

\psarc(0.0384615,0){0.0384615}{0}{180}

\psarc(0.0801282,0){0.00320513}{0}{180}

\psarc(0.919872,0){0.00320513}{0}{180}

\psarc(0.961538,0){0.0384615}{0}{180}

\psarc(0.0357143,0){0.0357143}{0}{180}

\psarc(0.0741758,0){0.00274725}{0}{180}

\psarc(0.925824,0){0.00274725}{0}{180}

\psarc(0.964286,0){0.0357143}{0}{180}

\psarc(0.0333333,0){0.0333333}{0}{180}

\psarc(0.0690476,0){0.00238095}{0}{180}

\psarc(0.930952,0){0.00238095}{0}{180}

\psarc(0.966667,0){0.0333333}{0}{180}

\psarc(0.03125,0){0.03125}{0}{180}

\psarc(0.0645833,0){0.00208333}{0}{180}

\psarc(0.935417,0){0.00208333}{0}{180}

\psarc(0.96875,0){0.03125}{0}{180}

\psarc(0.0294118,0){0.0294118}{0}{180}

\psarc(0.0606618,0){0.00183824}{0}{180}

\psarc(0.939338,0){0.00183824}{0}{180}

\psarc(0.970588,0){0.0294118}{0}{180}

\psarc(0.0277778,0){0.0277778}{0}{180}

\psarc(0.0571895,0){0.00163399}{0}{180}

\psarc(0.94281,0){0.00163399}{0}{180}

\psarc(0.972222,0){0.0277778}{0}{180}

\psarc(0.0263158,0){0.0263158}{0}{180}

\psarc(0.0540936,0){0.00146199}{0}{180}

\psarc(0.945906,0){0.00146199}{0}{180}

\psarc(0.973684,0){0.0263158}{0}{180}

\psarc(0.025,0){0.025}{0}{180}

\psarc(0.0513158,0){0.00131579}{0}{180}

\psarc(0.948684,0){0.00131579}{0}{180}

\psarc(0.975,0){0.025}{0}{180}

\psarc(0.0238095,0){0.0238095}{0}{180}

\psarc(0.0488095,0){0.00119048}{0}{180}

\psarc(0.95119,0){0.00119048}{0}{180}

\psarc(0.97619,0){0.0238095}{0}{180}

\psarc(0.0227273,0){0.0227273}{0}{180}

\psarc(0.0465368,0){0.00108225}{0}{180}

\psarc(0.953463,0){0.00108225}{0}{180}

\psarc(0.977273,0){0.0227273}{0}{180}

\psarc(0.0217391,0){0.0217391}{0}{180}

\psarc(0.0444664,0){0.000988142}{0}{180}

\psarc(0.955534,0){0.000988142}{0}{180}

\psarc(0.978261,0){0.0217391}{0}{180}

\psarc(0.0208333,0){0.0208333}{0}{180}

\psarc(0.0425725,0){0.000905797}{0}{180}

\psarc(0.957428,0){0.000905797}{0}{180}

\psarc(0.979167,0){0.0208333}{0}{180}

\psarc(0.02,0){0.02}{0}{180}

\psarc(0.0408333,0){0.000833333}{0}{180}

\psarc(0.959167,0){0.000833333}{0}{180}

\psarc(0.98,0){0.02}{0}{180}

\psarc(0.0192308,0){0.0192308}{0}{180}

\psarc(0.0392308,0){0.000769231}{0}{180}

\psarc(0.960769,0){0.000769231}{0}{180}

\psarc(0.980769,0){0.0192308}{0}{180}

\psarc(0.0185185,0){0.0185185}{0}{180}

\psarc(0.0377493,0){0.000712251}{0}{180}

\psarc(0.962251,0){0.000712251}{0}{180}

\psarc(0.981481,0){0.0185185}{0}{180}

\psarc(0.0178571,0){0.0178571}{0}{180}

\psarc(0.0363757,0){0.000661376}{0}{180}

\psarc(0.963624,0){0.000661376}{0}{180}

\psarc(0.982143,0){0.0178571}{0}{180}

\psarc(0.0172414,0){0.0172414}{0}{180}

\psarc(0.0350985,0){0.000615764}{0}{180}

\psarc(0.964901,0){0.000615764}{0}{180}

\psarc(0.982759,0){0.0172414}{0}{180}

\psarc(0.0166667,0){0.0166667}{0}{180}

\psarc(0.033908,0){0.000574713}{0}{180}

\psarc(0.966092,0){0.000574713}{0}{180}

\psarc(0.983333,0){0.0166667}{0}{180}

\psarc(0.016129,0){0.016129}{0}{180}

\psarc(0.0327957,0){0.000537634}{0}{180}

\psarc(0.967204,0){0.000537634}{0}{180}

\psarc(0.983871,0){0.016129}{0}{180}

\psarc(0.015625,0){0.015625}{0}{180}

\psarc(0.031754,0){0.000504032}{0}{180}

\psarc(0.968246,0){0.000504032}{0}{180}

\psarc(0.984375,0){0.015625}{0}{180}

\psarc(0.0151515,0){0.0151515}{0}{180}

\psarc(0.0307765,0){0.000473485}{0}{180}

\psarc(0.969223,0){0.000473485}{0}{180}

\psarc(0.984848,0){0.0151515}{0}{180}

\psarc(0.0147059,0){0.0147059}{0}{180}

\psarc(0.0298574,0){0.000445633}{0}{180}

\psarc(0.970143,0){0.000445633}{0}{180}

\psarc(0.985294,0){0.0147059}{0}{180}

\psarc(0.0142857,0){0.0142857}{0}{180}

\psarc(0.0289916,0){0.000420168}{0}{180}

\psarc(0.971008,0){0.000420168}{0}{180}

\psarc(0.985714,0){0.0142857}{0}{180}

\psarc(0.0138889,0){0.0138889}{0}{180}

\psarc(0.0281746,0){0.000396825}{0}{180}

\psarc(0.971825,0){0.000396825}{0}{180}

\psarc(0.986111,0){0.0138889}{0}{180}

\psarc(0.0135135,0){0.0135135}{0}{180}

\psarc(0.0274024,0){0.000375375}{0}{180}

\psarc(0.972598,0){0.000375375}{0}{180}

\psarc(0.986486,0){0.0135135}{0}{180}

\psarc(0.0131579,0){0.0131579}{0}{180}

\psarc(0.0266714,0){0.000355619}{0}{180}

\psarc(0.973329,0){0.000355619}{0}{180}

\psarc(0.986842,0){0.0131579}{0}{180}

\psarc(0.0128205,0){0.0128205}{0}{180}

\psarc(0.0259784,0){0.000337382}{0}{180}

\psarc(0.974022,0){0.000337382}{0}{180}

\psarc(0.987179,0){0.0128205}{0}{180}

\psarc(0.0125,0){0.0125}{0}{180}

\psarc(0.0253205,0){0.000320513}{0}{180}

\psarc(0.974679,0){0.000320513}{0}{180}

\psarc(0.9875,0){0.0125}{0}{180}

\psarc[fillstyle=solid,fillcolor=red](0.3875,0){0.0125}{0}{180}

\psarc[fillstyle=solid,fillcolor=red](0.6125,0){0.0125}{0}{180}

\psarc[fillstyle=solid,fillcolor=red](0.211111,0){0.0111111}{0}{180}

\psarc[fillstyle=solid,fillcolor=red](0.261364,0){0.0113636}{0}{180}

\psarc[fillstyle=solid,fillcolor=red](0.279221,0){0.00649351}{0}{180}

\psarc[fillstyle=solid,fillcolor=red](0.292857,0){0.00714286}{0}{180}

\psarc[fillstyle=solid,fillcolor=red](0.369318,0){0.00568182}{0}{180}

\psarc[fillstyle=solid,fillcolor=red](0.408333,0){0.00833333}{0}{180}

\psarc[fillstyle=solid,fillcolor=red](0.422619,0){0.00595238}{0}{180}

\psarc[fillstyle=solid,fillcolor=red](0.436508,0){0.00793651}{0}{180}

\psarc[fillstyle=solid,fillcolor=red](0.563492,0){0.00793651}{0}{180}

\psarc[fillstyle=solid,fillcolor=red](0.577381,0){0.00595238}{0}{180}

\psarc[fillstyle=solid,fillcolor=red](0.591667,0){0.00833333}{0}{180}

\psarc[fillstyle=solid,fillcolor=red](0.630682,0){0.00568182}{0}{180}

\psarc[fillstyle=solid,fillcolor=red](0.707143,0){0.00714286}{0}{180}

\psarc[fillstyle=solid,fillcolor=red](0.720779,0){0.00649351}{0}{180}

\psarc[fillstyle=solid,fillcolor=red](0.738636,0){0.0113636}{0}{180}

\psarc[fillstyle=solid,fillcolor=red](0.788889,0){0.0111111}{0}{180}

\psarc[fillstyle=solid,fillcolor=red](0.154762,0){0.0119048}{0}{180}

\psarc[fillstyle=solid,fillcolor=red](0.174242,0){0.00757576}{0}{180}

\psarc[fillstyle=solid,fillcolor=red](0.190909,0){0.00909091}{0}{180}

\psarc[fillstyle=solid,fillcolor=red](0.226496,0){0.0042735}{0}{180}

\psarc[fillstyle=solid,fillcolor=red](0.240385,0){0.00961538}{0}{180}

\psarc[fillstyle=solid,fillcolor=red](0.303846,0){0.00384615}{0}{180}

\psarc[fillstyle=solid,fillcolor=red](0.345238,0){0.0119048}{0}{180}

\psarc[fillstyle=solid,fillcolor=red](0.36039,0){0.00324675}{0}{180}

\psarc[fillstyle=solid,fillcolor=red](0.449495,0){0.00505051}{0}{180}

\psarc[fillstyle=solid,fillcolor=red](0.550505,0){0.00505051}{0}{180}

\psarc[fillstyle=solid,fillcolor=red](0.63961,0){0.00324675}{0}{180}

\psarc[fillstyle=solid,fillcolor=red](0.654762,0){0.0119048}{0}{180}

\psarc[fillstyle=solid,fillcolor=red](0.696154,0){0.00384615}{0}{180}

\psarc[fillstyle=solid,fillcolor=red](0.759615,0){0.00961538}{0}{180}

\psarc[fillstyle=solid,fillcolor=red](0.773504,0){0.0042735}{0}{180}

\psarc[fillstyle=solid,fillcolor=red](0.809091,0){0.00909091}{0}{180}

\psarc[fillstyle=solid,fillcolor=red](0.825758,0){0.00757576}{0}{180}

\psarc[fillstyle=solid,fillcolor=red](0.845238,0){0.0119048}{0}{180}

\psarc[fillstyle=solid,fillcolor=red](0.133929,0){0.00892857}{0}{180}

\psarc[fillstyle=solid,fillcolor=red](0.310096,0){0.00240385}{0}{180}

\psarc[fillstyle=solid,fillcolor=red](0.322917,0){0.0104167}{0}{180}

\psarc[fillstyle=solid,fillcolor=red](0.458042,0){0.0034965}{0}{180}

\psarc[fillstyle=solid,fillcolor=red](0.541958,0){0.0034965}{0}{180}

\psarc[fillstyle=solid,fillcolor=red](0.677083,0){0.0104167}{0}{180}

\psarc[fillstyle=solid,fillcolor=red](0.689904,0){0.00240385}{0}{180}

\psarc[fillstyle=solid,fillcolor=red](0.866071,0){0.00892857}{0}{180}

\psarc[fillstyle=solid,fillcolor=red](0.118056,0){0.00694444}{0}{180}

\psarc[fillstyle=solid,fillcolor=red](0.464103,0){0.0025641}{0}{180}

\psarc[fillstyle=solid,fillcolor=red](0.535897,0){0.0025641}{0}{180}

\psarc[fillstyle=solid,fillcolor=red](0.881944,0){0.00694444}{0}{180}

\psarc[fillstyle=solid,fillcolor=red](0.105556,0){0.00555556}{0}{180}

\psarc[fillstyle=solid,fillcolor=red](0.468627,0){0.00196078}{0}{180}

\psarc[fillstyle=solid,fillcolor=red](0.531373,0){0.00196078}{0}{180}

\psarc[fillstyle=solid,fillcolor=red](0.894444,0){0.00555556}{0}{180}

\psarc[fillstyle=solid,fillcolor=red](0.0954545,0){0.00454545}{0}{180}

\psarc[fillstyle=solid,fillcolor=red](0.472136,0){0.00154799}{0}{180}

\psarc[fillstyle=solid,fillcolor=red](0.527864,0){0.00154799}{0}{180}

\psarc[fillstyle=solid,fillcolor=red](0.904545,0){0.00454545}{0}{180}

\psarc[fillstyle=solid,fillcolor=red](0.0871212,0){0.00378788}{0}{180}

\psarc[fillstyle=solid,fillcolor=red](0.474937,0){0.00125313}{0}{180}

\psarc[fillstyle=solid,fillcolor=red](0.488095,0){0.0119048}{0}{180}

\psarc[fillstyle=solid,fillcolor=red](0.511905,0){0.0119048}{0}{180}

\psarc[fillstyle=solid,fillcolor=red](0.525063,0){0.00125313}{0}{180}

\psarc[fillstyle=solid,fillcolor=red](0.912879,0){0.00378788}{0}{180}

\psarc[fillstyle=solid,fillcolor=red](0.0801282,0){0.00320513}{0}{180}

\psarc[fillstyle=solid,fillcolor=red](0.919872,0){0.00320513}{0}{180}

\psarc[fillstyle=solid,fillcolor=red](0.0741758,0){0.00274725}{0}{180}

\psarc[fillstyle=solid,fillcolor=red](0.925824,0){0.00274725}{0}{180}

\psarc[fillstyle=solid,fillcolor=red](0.0690476,0){0.00238095}{0}{180}

\psarc[fillstyle=solid,fillcolor=red](0.930952,0){0.00238095}{0}{180}

\psarc[fillstyle=solid,fillcolor=red](0.0645833,0){0.00208333}{0}{180}

\psarc[fillstyle=solid,fillcolor=red](0.935417,0){0.00208333}{0}{180}

\psarc[fillstyle=solid,fillcolor=red](0.0606618,0){0.00183824}{0}{180}

\psarc[fillstyle=solid,fillcolor=red](0.939338,0){0.00183824}{0}{180}

\psarc[fillstyle=solid,fillcolor=red](0.0571895,0){0.00163399}{0}{180}

\psarc[fillstyle=solid,fillcolor=red](0.94281,0){0.00163399}{0}{180}

\psarc[fillstyle=solid,fillcolor=red](0.0540936,0){0.00146199}{0}{180}

\psarc[fillstyle=solid,fillcolor=red](0.945906,0){0.00146199}{0}{180}

\psarc[fillstyle=solid,fillcolor=red](0.0513158,0){0.00131579}{0}{180}

\psarc[fillstyle=solid,fillcolor=red](0.948684,0){0.00131579}{0}{180}

\psarc[fillstyle=solid,fillcolor=red](0.0488095,0){0.00119048}{0}{180}

\psarc[fillstyle=solid,fillcolor=red](0.95119,0){0.00119048}{0}{180}

\psarc[fillstyle=solid,fillcolor=red](0.0465368,0){0.00108225}{0}{180}

\psarc[fillstyle=solid,fillcolor=red](0.953463,0){0.00108225}{0}{180}

\psarc[fillstyle=solid,fillcolor=red](0.0444664,0){0.000988142}{0}{180}

\psarc[fillstyle=solid,fillcolor=red](0.955534,0){0.000988142}{0}{180}

\psarc[fillstyle=solid,fillcolor=red](0.0425725,0){0.000905797}{0}{180}

\psarc[fillstyle=solid,fillcolor=red](0.957428,0){0.000905797}{0}{180}

\psarc[fillstyle=solid,fillcolor=red](0.0408333,0){0.000833333}{0}{180}

\psarc[fillstyle=solid,fillcolor=red](0.959167,0){0.000833333}{0}{180}

\psarc[fillstyle=solid,fillcolor=red](0.0392308,0){0.000769231}{0}{180}

\psarc[fillstyle=solid,fillcolor=red](0.960769,0){0.000769231}{0}{180}

\psarc[fillstyle=solid,fillcolor=red](0.0377493,0){0.000712251}{0}{180}

\psarc[fillstyle=solid,fillcolor=red](0.962251,0){0.000712251}{0}{180}

\psarc[fillstyle=solid,fillcolor=red](0.0363757,0){0.000661376}{0}{180}

\psarc[fillstyle=solid,fillcolor=red](0.963624,0){0.000661376}{0}{180}

\psarc[fillstyle=solid,fillcolor=red](0.0350985,0){0.000615764}{0}{180}

\psarc[fillstyle=solid,fillcolor=red](0.964901,0){0.000615764}{0}{180}

\psarc[fillstyle=solid,fillcolor=red](0.033908,0){0.000574713}{0}{180}

\psarc[fillstyle=solid,fillcolor=red](0.966092,0){0.000574713}{0}{180}

\psarc[fillstyle=solid,fillcolor=red](0.0327957,0){0.000537634}{0}{180}

\psarc[fillstyle=solid,fillcolor=red](0.967204,0){0.000537634}{0}{180}

\psarc[fillstyle=solid,fillcolor=red](0.031754,0){0.000504032}{0}{180}

\psarc[fillstyle=solid,fillcolor=red](0.968246,0){0.000504032}{0}{180}

\psarc[fillstyle=solid,fillcolor=red](0.0307765,0){0.000473485}{0}{180}

\psarc[fillstyle=solid,fillcolor=red](0.969223,0){0.000473485}{0}{180}

\psarc[fillstyle=solid,fillcolor=red](0.0298574,0){0.000445633}{0}{180}

\psarc[fillstyle=solid,fillcolor=red](0.970143,0){0.000445633}{0}{180}

\psarc[fillstyle=solid,fillcolor=red](0.0289916,0){0.000420168}{0}{180}

\psarc[fillstyle=solid,fillcolor=red](0.971008,0){0.000420168}{0}{180}

\psarc[fillstyle=solid,fillcolor=red](0.0281746,0){0.000396825}{0}{180}

\psarc[fillstyle=solid,fillcolor=red](0.971825,0){0.000396825}{0}{180}

\psarc[fillstyle=solid,fillcolor=red](0.0274024,0){0.000375375}{0}{180}

\psarc[fillstyle=solid,fillcolor=red](0.972598,0){0.000375375}{0}{180}

\psarc[fillstyle=solid,fillcolor=red](0.0266714,0){0.000355619}{0}{180}

\psarc[fillstyle=solid,fillcolor=red](0.973329,0){0.000355619}{0}{180}

\psarc[fillstyle=solid,fillcolor=red](0.0259784,0){0.000337382}{0}{180}

\psarc[fillstyle=solid,fillcolor=red](0.974022,0){0.000337382}{0}{180}

\psarc[fillstyle=solid,fillcolor=red](0.0125,0){0.0125}{0}{180}

\psarc[fillstyle=solid,fillcolor=red](0.0253205,0){0.000320513}{0}{180}

\psarc[fillstyle=solid,fillcolor=red](0.974679,0){0.000320513}{0}{180}

\psarc[fillstyle=solid,fillcolor=red](0.9875,0){0.0125}{0}{180}
\end{pspicture}}

\rput(0.5,0.35){%
        \begin{pspicture}(0,0)(1,0.7)
\psarc(0.5,0){0.5}{0}{180}

\psarc(0.25,0){0.25}{0}{180}

\psarc(0.75,0){0.25}{0}{180}

\psarc(0.166667,0){0.166667}{0}{180}

\psarc(0.416667,0){0.0833333}{0}{180}

\psarc(0.583333,0){0.0833333}{0}{180}

\psarc(0.833333,0){0.166667}{0}{180}

\psarc(0.125,0){0.125}{0}{180}

\psarc(0.291667,0){0.0416667}{0}{180}

\psarc(0.366667,0){0.0333333}{0}{180}

\psarc(0.45,0){0.05}{0}{180}

\psarc(0.55,0){0.05}{0}{180}

\psarc(0.633333,0){0.0333333}{0}{180}

\psarc(0.708333,0){0.0416667}{0}{180}

\psarc(0.875,0){0.125}{0}{180}

\psarc(0.1,0){0.1}{0}{180}

\psarc(0.225,0){0.025}{0}{180}

\psarc(0.267857,0){0.0178571}{0}{180}

\psarc(0.309524,0){0.0238095}{0}{180}

\psarc(0.354167,0){0.0208333}{0}{180}

\psarc(0.3875,0){0.0125}{0}{180}

\psarc(0.414286,0){0.0142857}{0}{180}

\psarc(0.464286,0){0.0357143}{0}{180}

\psarc(0.535714,0){0.0357143}{0}{180}

\psarc(0.585714,0){0.0142857}{0}{180}

\psarc(0.6125,0){0.0125}{0}{180}

\psarc(0.645833,0){0.0208333}{0}{180}

\psarc(0.690476,0){0.0238095}{0}{180}

\psarc(0.732143,0){0.0178571}{0}{180}

\psarc(0.775,0){0.025}{0}{180}

\psarc(0.9,0){0.1}{0}{180}

\psarc(0.0833333,0){0.0833333}{0}{180}

\psarc(0.183333,0){0.0166667}{0}{180}

\psarc(0.211111,0){0.0111111}{0}{180}

\psarc(0.236111,0){0.0138889}{0}{180}

\psarc(0.261364,0){0.0113636}{0}{180}

\psarc(0.279221,0){0.00649351}{0}{180}

\psarc(0.292857,0){0.00714286}{0}{180}

\psarc(0.316667,0){0.0166667}{0}{180}

\psarc(0.348485,0){0.0151515}{0}{180}

\psarc(0.369318,0){0.00568182}{0}{180}

\psarc(0.408333,0){0.00833333}{0}{180}

\psarc(0.422619,0){0.00595238}{0}{180}

\psarc(0.436508,0){0.00793651}{0}{180}

\psarc(0.472222,0){0.0277778}{0}{180}

\psarc(0.527778,0){0.0277778}{0}{180}

\psarc(0.563492,0){0.00793651}{0}{180}

\psarc(0.577381,0){0.00595238}{0}{180}

\psarc(0.591667,0){0.00833333}{0}{180}

\psarc(0.630682,0){0.00568182}{0}{180}

\psarc(0.651515,0){0.0151515}{0}{180}

\psarc(0.683333,0){0.0166667}{0}{180}

\psarc(0.707143,0){0.00714286}{0}{180}

\psarc(0.720779,0){0.00649351}{0}{180}

\psarc(0.738636,0){0.0113636}{0}{180}

\psarc(0.763889,0){0.0138889}{0}{180}

\psarc(0.788889,0){0.0111111}{0}{180}

\psarc(0.816667,0){0.0166667}{0}{180}

\psarc(0.916667,0){0.0833333}{0}{180}

\psarc(0.0714286,0){0.0714286}{0}{180}

\psarc(0.154762,0){0.0119048}{0}{180}

\psarc(0.174242,0){0.00757576}{0}{180}

\psarc(0.190909,0){0.00909091}{0}{180}

\psarc(0.226496,0){0.0042735}{0}{180}

\psarc(0.240385,0){0.00961538}{0}{180}

\psarc(0.303846,0){0.00384615}{0}{180}

\psarc(0.320513,0){0.0128205}{0}{180}

\psarc(0.345238,0){0.0119048}{0}{180}

\psarc(0.36039,0){0.00324675}{0}{180}

\psarc(0.449495,0){0.00505051}{0}{180}

\psarc(0.477273,0){0.0227273}{0}{180}

\psarc(0.522727,0){0.0227273}{0}{180}

\psarc(0.550505,0){0.00505051}{0}{180}

\psarc(0.63961,0){0.00324675}{0}{180}

\psarc(0.654762,0){0.0119048}{0}{180}

\psarc(0.679487,0){0.0128205}{0}{180}

\psarc(0.696154,0){0.00384615}{0}{180}

\psarc(0.759615,0){0.00961538}{0}{180}

\psarc(0.773504,0){0.0042735}{0}{180}

\psarc(0.809091,0){0.00909091}{0}{180}

\psarc(0.825758,0){0.00757576}{0}{180}

\psarc(0.845238,0){0.0119048}{0}{180}

\psarc(0.928571,0){0.0714286}{0}{180}

\psarc(0.0625,0){0.0625}{0}{180}

\psarc(0.133929,0){0.00892857}{0}{180}

\psarc(0.310096,0){0.00240385}{0}{180}

\psarc(0.322917,0){0.0104167}{0}{180}

\psarc(0.458042,0){0.0034965}{0}{180}

\psarc(0.480769,0){0.0192308}{0}{180}

\psarc(0.519231,0){0.0192308}{0}{180}

\psarc(0.541958,0){0.0034965}{0}{180}

\psarc(0.677083,0){0.0104167}{0}{180}

\psarc(0.689904,0){0.00240385}{0}{180}

\psarc(0.866071,0){0.00892857}{0}{180}

\psarc(0.9375,0){0.0625}{0}{180}

\psarc(0.0555556,0){0.0555556}{0}{180}

\psarc(0.118056,0){0.00694444}{0}{180}

\psarc(0.464103,0){0.0025641}{0}{180}

\psarc(0.483333,0){0.0166667}{0}{180}

\psarc(0.516667,0){0.0166667}{0}{180}

\psarc(0.535897,0){0.0025641}{0}{180}

\psarc(0.881944,0){0.00694444}{0}{180}

\psarc(0.944444,0){0.0555556}{0}{180}

\psarc(0.05,0){0.05}{0}{180}

\psarc(0.105556,0){0.00555556}{0}{180}

\psarc(0.468627,0){0.00196078}{0}{180}

\psarc(0.485294,0){0.0147059}{0}{180}

\psarc(0.514706,0){0.0147059}{0}{180}

\psarc(0.531373,0){0.00196078}{0}{180}

\psarc(0.894444,0){0.00555556}{0}{180}

\psarc(0.95,0){0.05}{0}{180}

\psarc(0.0454545,0){0.0454545}{0}{180}

\psarc(0.0954545,0){0.00454545}{0}{180}

\psarc(0.472136,0){0.00154799}{0}{180}

\psarc(0.486842,0){0.0131579}{0}{180}

\psarc(0.513158,0){0.0131579}{0}{180}

\psarc(0.527864,0){0.00154799}{0}{180}

\psarc(0.904545,0){0.00454545}{0}{180}

\psarc(0.954545,0){0.0454545}{0}{180}

\psarc(0.0416667,0){0.0416667}{0}{180}

\psarc(0.0871212,0){0.00378788}{0}{180}

\psarc(0.474937,0){0.00125313}{0}{180}

\psarc(0.488095,0){0.0119048}{0}{180}

\psarc(0.511905,0){0.0119048}{0}{180}

\psarc(0.525063,0){0.00125313}{0}{180}

\psarc(0.912879,0){0.00378788}{0}{180}

\psarc(0.958333,0){0.0416667}{0}{180}

\psarc(0.0384615,0){0.0384615}{0}{180}

\psarc(0.0801282,0){0.00320513}{0}{180}

\psarc(0.919872,0){0.00320513}{0}{180}

\psarc(0.961538,0){0.0384615}{0}{180}

\psarc(0.0357143,0){0.0357143}{0}{180}

\psarc(0.0741758,0){0.00274725}{0}{180}

\psarc(0.925824,0){0.00274725}{0}{180}

\psarc(0.964286,0){0.0357143}{0}{180}

\psarc(0.0333333,0){0.0333333}{0}{180}

\psarc(0.0690476,0){0.00238095}{0}{180}

\psarc(0.930952,0){0.00238095}{0}{180}

\psarc(0.966667,0){0.0333333}{0}{180}

\psarc(0.03125,0){0.03125}{0}{180}

\psarc(0.0645833,0){0.00208333}{0}{180}

\psarc(0.935417,0){0.00208333}{0}{180}

\psarc(0.96875,0){0.03125}{0}{180}

\psarc(0.0294118,0){0.0294118}{0}{180}

\psarc(0.0606618,0){0.00183824}{0}{180}

\psarc(0.939338,0){0.00183824}{0}{180}

\psarc(0.970588,0){0.0294118}{0}{180}

\psarc(0.0277778,0){0.0277778}{0}{180}

\psarc(0.0571895,0){0.00163399}{0}{180}

\psarc(0.94281,0){0.00163399}{0}{180}

\psarc(0.972222,0){0.0277778}{0}{180}

\psarc(0.0263158,0){0.0263158}{0}{180}

\psarc(0.0540936,0){0.00146199}{0}{180}

\psarc(0.945906,0){0.00146199}{0}{180}

\psarc(0.973684,0){0.0263158}{0}{180}

\psarc(0.025,0){0.025}{0}{180}

\psarc(0.0513158,0){0.00131579}{0}{180}

\psarc(0.948684,0){0.00131579}{0}{180}

\psarc(0.975,0){0.025}{0}{180}

\psarc(0.0238095,0){0.0238095}{0}{180}

\psarc(0.0488095,0){0.00119048}{0}{180}

\psarc(0.95119,0){0.00119048}{0}{180}

\psarc(0.97619,0){0.0238095}{0}{180}

\psarc(0.0227273,0){0.0227273}{0}{180}

\psarc(0.0465368,0){0.00108225}{0}{180}

\psarc(0.953463,0){0.00108225}{0}{180}

\psarc(0.977273,0){0.0227273}{0}{180}

\psarc(0.0217391,0){0.0217391}{0}{180}

\psarc(0.0444664,0){0.000988142}{0}{180}

\psarc(0.955534,0){0.000988142}{0}{180}

\psarc(0.978261,0){0.0217391}{0}{180}

\psarc(0.0208333,0){0.0208333}{0}{180}

\psarc(0.0425725,0){0.000905797}{0}{180}

\psarc(0.957428,0){0.000905797}{0}{180}

\psarc(0.979167,0){0.0208333}{0}{180}

\psarc(0.02,0){0.02}{0}{180}

\psarc(0.0408333,0){0.000833333}{0}{180}

\psarc(0.959167,0){0.000833333}{0}{180}

\psarc(0.98,0){0.02}{0}{180}

\psarc(0.0192308,0){0.0192308}{0}{180}

\psarc(0.0392308,0){0.000769231}{0}{180}

\psarc(0.960769,0){0.000769231}{0}{180}

\psarc(0.980769,0){0.0192308}{0}{180}

\psarc(0.0185185,0){0.0185185}{0}{180}

\psarc(0.0377493,0){0.000712251}{0}{180}

\psarc(0.962251,0){0.000712251}{0}{180}

\psarc(0.981481,0){0.0185185}{0}{180}

\psarc(0.0178571,0){0.0178571}{0}{180}

\psarc(0.0363757,0){0.000661376}{0}{180}

\psarc(0.963624,0){0.000661376}{0}{180}

\psarc(0.982143,0){0.0178571}{0}{180}

\psarc(0.0172414,0){0.0172414}{0}{180}

\psarc(0.0350985,0){0.000615764}{0}{180}

\psarc(0.964901,0){0.000615764}{0}{180}

\psarc(0.982759,0){0.0172414}{0}{180}

\psarc(0.0166667,0){0.0166667}{0}{180}

\psarc(0.033908,0){0.000574713}{0}{180}

\psarc(0.966092,0){0.000574713}{0}{180}

\psarc(0.983333,0){0.0166667}{0}{180}

\psarc(0.016129,0){0.016129}{0}{180}

\psarc(0.0327957,0){0.000537634}{0}{180}

\psarc(0.967204,0){0.000537634}{0}{180}

\psarc(0.983871,0){0.016129}{0}{180}

\psarc(0.015625,0){0.015625}{0}{180}

\psarc(0.031754,0){0.000504032}{0}{180}

\psarc(0.968246,0){0.000504032}{0}{180}

\psarc(0.984375,0){0.015625}{0}{180}

\psarc(0.0151515,0){0.0151515}{0}{180}

\psarc(0.0307765,0){0.000473485}{0}{180}

\psarc(0.969223,0){0.000473485}{0}{180}

\psarc(0.984848,0){0.0151515}{0}{180}

\psarc(0.0147059,0){0.0147059}{0}{180}

\psarc(0.0298574,0){0.000445633}{0}{180}

\psarc(0.970143,0){0.000445633}{0}{180}

\psarc(0.985294,0){0.0147059}{0}{180}

\psarc(0.0142857,0){0.0142857}{0}{180}

\psarc(0.0289916,0){0.000420168}{0}{180}

\psarc(0.971008,0){0.000420168}{0}{180}

\psarc(0.985714,0){0.0142857}{0}{180}

\psarc(0.0138889,0){0.0138889}{0}{180}

\psarc(0.0281746,0){0.000396825}{0}{180}

\psarc(0.971825,0){0.000396825}{0}{180}

\psarc(0.986111,0){0.0138889}{0}{180}

\psarc(0.0135135,0){0.0135135}{0}{180}

\psarc(0.0274024,0){0.000375375}{0}{180}

\psarc(0.972598,0){0.000375375}{0}{180}

\psarc(0.986486,0){0.0135135}{0}{180}

\psarc(0.0131579,0){0.0131579}{0}{180}

\psarc(0.0266714,0){0.000355619}{0}{180}

\psarc(0.973329,0){0.000355619}{0}{180}

\psarc(0.986842,0){0.0131579}{0}{180}

\psarc(0.0128205,0){0.0128205}{0}{180}

\psarc(0.0259784,0){0.000337382}{0}{180}

\psarc(0.974022,0){0.000337382}{0}{180}

\psarc(0.987179,0){0.0128205}{0}{180}

\psarc(0.0125,0){0.0125}{0}{180}

\psarc(0.0253205,0){0.000320513}{0}{180}

\psarc(0.974679,0){0.000320513}{0}{180}

\psarc(0.9875,0){0.0125}{0}{180}

\psarc[fillstyle=solid,fillcolor=red](0.3875,0){0.0125}{0}{180}

\psarc[fillstyle=solid,fillcolor=red](0.6125,0){0.0125}{0}{180}

\psarc[fillstyle=solid,fillcolor=red](0.211111,0){0.0111111}{0}{180}

\psarc[fillstyle=solid,fillcolor=red](0.261364,0){0.0113636}{0}{180}

\psarc[fillstyle=solid,fillcolor=red](0.279221,0){0.00649351}{0}{180}

\psarc[fillstyle=solid,fillcolor=red](0.292857,0){0.00714286}{0}{180}

\psarc[fillstyle=solid,fillcolor=red](0.369318,0){0.00568182}{0}{180}

\psarc[fillstyle=solid,fillcolor=red](0.408333,0){0.00833333}{0}{180}

\psarc[fillstyle=solid,fillcolor=red](0.422619,0){0.00595238}{0}{180}

\psarc[fillstyle=solid,fillcolor=red](0.436508,0){0.00793651}{0}{180}

\psarc[fillstyle=solid,fillcolor=red](0.563492,0){0.00793651}{0}{180}

\psarc[fillstyle=solid,fillcolor=red](0.577381,0){0.00595238}{0}{180}

\psarc[fillstyle=solid,fillcolor=red](0.591667,0){0.00833333}{0}{180}

\psarc[fillstyle=solid,fillcolor=red](0.630682,0){0.00568182}{0}{180}

\psarc[fillstyle=solid,fillcolor=red](0.707143,0){0.00714286}{0}{180}

\psarc[fillstyle=solid,fillcolor=red](0.720779,0){0.00649351}{0}{180}

\psarc[fillstyle=solid,fillcolor=red](0.738636,0){0.0113636}{0}{180}

\psarc[fillstyle=solid,fillcolor=red](0.788889,0){0.0111111}{0}{180}

\psarc[fillstyle=solid,fillcolor=red](0.154762,0){0.0119048}{0}{180}

\psarc[fillstyle=solid,fillcolor=red](0.174242,0){0.00757576}{0}{180}

\psarc[fillstyle=solid,fillcolor=red](0.190909,0){0.00909091}{0}{180}

\psarc[fillstyle=solid,fillcolor=red](0.226496,0){0.0042735}{0}{180}

\psarc[fillstyle=solid,fillcolor=red](0.240385,0){0.00961538}{0}{180}

\psarc[fillstyle=solid,fillcolor=red](0.303846,0){0.00384615}{0}{180}

\psarc[fillstyle=solid,fillcolor=red](0.345238,0){0.0119048}{0}{180}

\psarc[fillstyle=solid,fillcolor=red](0.36039,0){0.00324675}{0}{180}

\psarc[fillstyle=solid,fillcolor=red](0.449495,0){0.00505051}{0}{180}

\psarc[fillstyle=solid,fillcolor=red](0.550505,0){0.00505051}{0}{180}

\psarc[fillstyle=solid,fillcolor=red](0.63961,0){0.00324675}{0}{180}

\psarc[fillstyle=solid,fillcolor=red](0.654762,0){0.0119048}{0}{180}

\psarc[fillstyle=solid,fillcolor=red](0.696154,0){0.00384615}{0}{180}

\psarc[fillstyle=solid,fillcolor=red](0.759615,0){0.00961538}{0}{180}

\psarc[fillstyle=solid,fillcolor=red](0.773504,0){0.0042735}{0}{180}

\psarc[fillstyle=solid,fillcolor=red](0.809091,0){0.00909091}{0}{180}

\psarc[fillstyle=solid,fillcolor=red](0.825758,0){0.00757576}{0}{180}

\psarc[fillstyle=solid,fillcolor=red](0.845238,0){0.0119048}{0}{180}

\psarc[fillstyle=solid,fillcolor=red](0.133929,0){0.00892857}{0}{180}

\psarc[fillstyle=solid,fillcolor=red](0.310096,0){0.00240385}{0}{180}

\psarc[fillstyle=solid,fillcolor=red](0.322917,0){0.0104167}{0}{180}

\psarc[fillstyle=solid,fillcolor=red](0.458042,0){0.0034965}{0}{180}

\psarc[fillstyle=solid,fillcolor=red](0.541958,0){0.0034965}{0}{180}

\psarc[fillstyle=solid,fillcolor=red](0.677083,0){0.0104167}{0}{180}

\psarc[fillstyle=solid,fillcolor=red](0.689904,0){0.00240385}{0}{180}

\psarc[fillstyle=solid,fillcolor=red](0.866071,0){0.00892857}{0}{180}

\psarc[fillstyle=solid,fillcolor=red](0.118056,0){0.00694444}{0}{180}

\psarc[fillstyle=solid,fillcolor=red](0.464103,0){0.0025641}{0}{180}

\psarc[fillstyle=solid,fillcolor=red](0.535897,0){0.0025641}{0}{180}

\psarc[fillstyle=solid,fillcolor=red](0.881944,0){0.00694444}{0}{180}

\psarc[fillstyle=solid,fillcolor=red](0.105556,0){0.00555556}{0}{180}

\psarc[fillstyle=solid,fillcolor=red](0.468627,0){0.00196078}{0}{180}

\psarc[fillstyle=solid,fillcolor=red](0.531373,0){0.00196078}{0}{180}

\psarc[fillstyle=solid,fillcolor=red](0.894444,0){0.00555556}{0}{180}

\psarc[fillstyle=solid,fillcolor=red](0.0954545,0){0.00454545}{0}{180}

\psarc[fillstyle=solid,fillcolor=red](0.472136,0){0.00154799}{0}{180}

\psarc[fillstyle=solid,fillcolor=red](0.527864,0){0.00154799}{0}{180}

\psarc[fillstyle=solid,fillcolor=red](0.904545,0){0.00454545}{0}{180}

\psarc[fillstyle=solid,fillcolor=red](0.0871212,0){0.00378788}{0}{180}

\psarc[fillstyle=solid,fillcolor=red](0.474937,0){0.00125313}{0}{180}

\psarc[fillstyle=solid,fillcolor=red](0.488095,0){0.0119048}{0}{180}

\psarc[fillstyle=solid,fillcolor=red](0.511905,0){0.0119048}{0}{180}

\psarc[fillstyle=solid,fillcolor=red](0.525063,0){0.00125313}{0}{180}

\psarc[fillstyle=solid,fillcolor=red](0.912879,0){0.00378788}{0}{180}

\psarc[fillstyle=solid,fillcolor=red](0.0801282,0){0.00320513}{0}{180}

\psarc[fillstyle=solid,fillcolor=red](0.919872,0){0.00320513}{0}{180}

\psarc[fillstyle=solid,fillcolor=red](0.0741758,0){0.00274725}{0}{180}

\psarc[fillstyle=solid,fillcolor=red](0.925824,0){0.00274725}{0}{180}

\psarc[fillstyle=solid,fillcolor=red](0.0690476,0){0.00238095}{0}{180}

\psarc[fillstyle=solid,fillcolor=red](0.930952,0){0.00238095}{0}{180}

\psarc[fillstyle=solid,fillcolor=red](0.0645833,0){0.00208333}{0}{180}

\psarc[fillstyle=solid,fillcolor=red](0.935417,0){0.00208333}{0}{180}

\psarc[fillstyle=solid,fillcolor=red](0.0606618,0){0.00183824}{0}{180}

\psarc[fillstyle=solid,fillcolor=red](0.939338,0){0.00183824}{0}{180}

\psarc[fillstyle=solid,fillcolor=red](0.0571895,0){0.00163399}{0}{180}

\psarc[fillstyle=solid,fillcolor=red](0.94281,0){0.00163399}{0}{180}

\psarc[fillstyle=solid,fillcolor=red](0.0540936,0){0.00146199}{0}{180}

\psarc[fillstyle=solid,fillcolor=red](0.945906,0){0.00146199}{0}{180}

\psarc[fillstyle=solid,fillcolor=red](0.0513158,0){0.00131579}{0}{180}

\psarc[fillstyle=solid,fillcolor=red](0.948684,0){0.00131579}{0}{180}

\psarc[fillstyle=solid,fillcolor=red](0.0488095,0){0.00119048}{0}{180}

\psarc[fillstyle=solid,fillcolor=red](0.95119,0){0.00119048}{0}{180}

\psarc[fillstyle=solid,fillcolor=red](0.0465368,0){0.00108225}{0}{180}

\psarc[fillstyle=solid,fillcolor=red](0.953463,0){0.00108225}{0}{180}

\psarc[fillstyle=solid,fillcolor=red](0.0444664,0){0.000988142}{0}{180}

\psarc[fillstyle=solid,fillcolor=red](0.955534,0){0.000988142}{0}{180}

\psarc[fillstyle=solid,fillcolor=red](0.0425725,0){0.000905797}{0}{180}

\psarc[fillstyle=solid,fillcolor=red](0.957428,0){0.000905797}{0}{180}

\psarc[fillstyle=solid,fillcolor=red](0.0408333,0){0.000833333}{0}{180}

\psarc[fillstyle=solid,fillcolor=red](0.959167,0){0.000833333}{0}{180}

\psarc[fillstyle=solid,fillcolor=red](0.0392308,0){0.000769231}{0}{180}

\psarc[fillstyle=solid,fillcolor=red](0.960769,0){0.000769231}{0}{180}

\psarc[fillstyle=solid,fillcolor=red](0.0377493,0){0.000712251}{0}{180}

\psarc[fillstyle=solid,fillcolor=red](0.962251,0){0.000712251}{0}{180}

\psarc[fillstyle=solid,fillcolor=red](0.0363757,0){0.000661376}{0}{180}

\psarc[fillstyle=solid,fillcolor=red](0.963624,0){0.000661376}{0}{180}

\psarc[fillstyle=solid,fillcolor=red](0.0350985,0){0.000615764}{0}{180}

\psarc[fillstyle=solid,fillcolor=red](0.964901,0){0.000615764}{0}{180}

\psarc[fillstyle=solid,fillcolor=red](0.033908,0){0.000574713}{0}{180}

\psarc[fillstyle=solid,fillcolor=red](0.966092,0){0.000574713}{0}{180}

\psarc[fillstyle=solid,fillcolor=red](0.0327957,0){0.000537634}{0}{180}

\psarc[fillstyle=solid,fillcolor=red](0.967204,0){0.000537634}{0}{180}

\psarc[fillstyle=solid,fillcolor=red](0.031754,0){0.000504032}{0}{180}

\psarc[fillstyle=solid,fillcolor=red](0.968246,0){0.000504032}{0}{180}

\psarc[fillstyle=solid,fillcolor=red](0.0307765,0){0.000473485}{0}{180}

\psarc[fillstyle=solid,fillcolor=red](0.969223,0){0.000473485}{0}{180}

\psarc[fillstyle=solid,fillcolor=red](0.0298574,0){0.000445633}{0}{180}

\psarc[fillstyle=solid,fillcolor=red](0.970143,0){0.000445633}{0}{180}

\psarc[fillstyle=solid,fillcolor=red](0.0289916,0){0.000420168}{0}{180}

\psarc[fillstyle=solid,fillcolor=red](0.971008,0){0.000420168}{0}{180}

\psarc[fillstyle=solid,fillcolor=red](0.0281746,0){0.000396825}{0}{180}

\psarc[fillstyle=solid,fillcolor=red](0.971825,0){0.000396825}{0}{180}

\psarc[fillstyle=solid,fillcolor=red](0.0274024,0){0.000375375}{0}{180}

\psarc[fillstyle=solid,fillcolor=red](0.972598,0){0.000375375}{0}{180}

\psarc[fillstyle=solid,fillcolor=red](0.0266714,0){0.000355619}{0}{180}

\psarc[fillstyle=solid,fillcolor=red](0.973329,0){0.000355619}{0}{180}

\psarc[fillstyle=solid,fillcolor=red](0.0259784,0){0.000337382}{0}{180}

\psarc[fillstyle=solid,fillcolor=red](0.974022,0){0.000337382}{0}{180}

\psarc[fillstyle=solid,fillcolor=red](0.0125,0){0.0125}{0}{180}

\psarc[fillstyle=solid,fillcolor=red](0.0253205,0){0.000320513}{0}{180}

\psarc[fillstyle=solid,fillcolor=red](0.974679,0){0.000320513}{0}{180}

\psarc[fillstyle=solid,fillcolor=red](0.9875,0){0.0125}{0}{180}
\end{pspicture}}

\rput(1.5,0.35){%
        \begin{pspicture}(0,0)(1,0.7)
\psarc(0.5,0){0.5}{0}{180}

\psarc(0.25,0){0.25}{0}{180}

\psarc(0.75,0){0.25}{0}{180}

\psarc(0.166667,0){0.166667}{0}{180}

\psarc(0.416667,0){0.0833333}{0}{180}

\psarc(0.583333,0){0.0833333}{0}{180}

\psarc(0.833333,0){0.166667}{0}{180}

\psarc(0.125,0){0.125}{0}{180}

\psarc(0.291667,0){0.0416667}{0}{180}

\psarc(0.366667,0){0.0333333}{0}{180}

\psarc(0.45,0){0.05}{0}{180}

\psarc(0.55,0){0.05}{0}{180}

\psarc(0.633333,0){0.0333333}{0}{180}

\psarc(0.708333,0){0.0416667}{0}{180}

\psarc(0.875,0){0.125}{0}{180}

\psarc(0.1,0){0.1}{0}{180}

\psarc(0.225,0){0.025}{0}{180}

\psarc(0.267857,0){0.0178571}{0}{180}

\psarc(0.309524,0){0.0238095}{0}{180}

\psarc(0.354167,0){0.0208333}{0}{180}

\psarc(0.3875,0){0.0125}{0}{180}

\psarc(0.414286,0){0.0142857}{0}{180}

\psarc(0.464286,0){0.0357143}{0}{180}

\psarc(0.535714,0){0.0357143}{0}{180}

\psarc(0.585714,0){0.0142857}{0}{180}

\psarc(0.6125,0){0.0125}{0}{180}

\psarc(0.645833,0){0.0208333}{0}{180}

\psarc(0.690476,0){0.0238095}{0}{180}

\psarc(0.732143,0){0.0178571}{0}{180}

\psarc(0.775,0){0.025}{0}{180}

\psarc(0.9,0){0.1}{0}{180}

\psarc(0.0833333,0){0.0833333}{0}{180}

\psarc(0.183333,0){0.0166667}{0}{180}

\psarc(0.211111,0){0.0111111}{0}{180}

\psarc(0.236111,0){0.0138889}{0}{180}

\psarc(0.261364,0){0.0113636}{0}{180}

\psarc(0.279221,0){0.00649351}{0}{180}

\psarc(0.292857,0){0.00714286}{0}{180}

\psarc(0.316667,0){0.0166667}{0}{180}

\psarc(0.348485,0){0.0151515}{0}{180}

\psarc(0.369318,0){0.00568182}{0}{180}

\psarc(0.408333,0){0.00833333}{0}{180}

\psarc(0.422619,0){0.00595238}{0}{180}

\psarc(0.436508,0){0.00793651}{0}{180}

\psarc(0.472222,0){0.0277778}{0}{180}

\psarc(0.527778,0){0.0277778}{0}{180}

\psarc(0.563492,0){0.00793651}{0}{180}

\psarc(0.577381,0){0.00595238}{0}{180}

\psarc(0.591667,0){0.00833333}{0}{180}

\psarc(0.630682,0){0.00568182}{0}{180}

\psarc(0.651515,0){0.0151515}{0}{180}

\psarc(0.683333,0){0.0166667}{0}{180}

\psarc(0.707143,0){0.00714286}{0}{180}

\psarc(0.720779,0){0.00649351}{0}{180}

\psarc(0.738636,0){0.0113636}{0}{180}

\psarc(0.763889,0){0.0138889}{0}{180}

\psarc(0.788889,0){0.0111111}{0}{180}

\psarc(0.816667,0){0.0166667}{0}{180}

\psarc(0.916667,0){0.0833333}{0}{180}

\psarc(0.0714286,0){0.0714286}{0}{180}

\psarc(0.154762,0){0.0119048}{0}{180}

\psarc(0.174242,0){0.00757576}{0}{180}

\psarc(0.190909,0){0.00909091}{0}{180}

\psarc(0.226496,0){0.0042735}{0}{180}

\psarc(0.240385,0){0.00961538}{0}{180}

\psarc(0.303846,0){0.00384615}{0}{180}

\psarc(0.320513,0){0.0128205}{0}{180}

\psarc(0.345238,0){0.0119048}{0}{180}

\psarc(0.36039,0){0.00324675}{0}{180}

\psarc(0.449495,0){0.00505051}{0}{180}

\psarc(0.477273,0){0.0227273}{0}{180}

\psarc(0.522727,0){0.0227273}{0}{180}

\psarc(0.550505,0){0.00505051}{0}{180}

\psarc(0.63961,0){0.00324675}{0}{180}

\psarc(0.654762,0){0.0119048}{0}{180}

\psarc(0.679487,0){0.0128205}{0}{180}

\psarc(0.696154,0){0.00384615}{0}{180}

\psarc(0.759615,0){0.00961538}{0}{180}

\psarc(0.773504,0){0.0042735}{0}{180}

\psarc(0.809091,0){0.00909091}{0}{180}

\psarc(0.825758,0){0.00757576}{0}{180}

\psarc(0.845238,0){0.0119048}{0}{180}

\psarc(0.928571,0){0.0714286}{0}{180}

\psarc(0.0625,0){0.0625}{0}{180}

\psarc(0.133929,0){0.00892857}{0}{180}

\psarc(0.310096,0){0.00240385}{0}{180}

\psarc(0.322917,0){0.0104167}{0}{180}

\psarc(0.458042,0){0.0034965}{0}{180}

\psarc(0.480769,0){0.0192308}{0}{180}

\psarc(0.519231,0){0.0192308}{0}{180}

\psarc(0.541958,0){0.0034965}{0}{180}

\psarc(0.677083,0){0.0104167}{0}{180}

\psarc(0.689904,0){0.00240385}{0}{180}

\psarc(0.866071,0){0.00892857}{0}{180}

\psarc(0.9375,0){0.0625}{0}{180}

\psarc(0.0555556,0){0.0555556}{0}{180}

\psarc(0.118056,0){0.00694444}{0}{180}

\psarc(0.464103,0){0.0025641}{0}{180}

\psarc(0.483333,0){0.0166667}{0}{180}

\psarc(0.516667,0){0.0166667}{0}{180}

\psarc(0.535897,0){0.0025641}{0}{180}

\psarc(0.881944,0){0.00694444}{0}{180}

\psarc(0.944444,0){0.0555556}{0}{180}

\psarc(0.05,0){0.05}{0}{180}

\psarc(0.105556,0){0.00555556}{0}{180}

\psarc(0.468627,0){0.00196078}{0}{180}

\psarc(0.485294,0){0.0147059}{0}{180}

\psarc(0.514706,0){0.0147059}{0}{180}

\psarc(0.531373,0){0.00196078}{0}{180}

\psarc(0.894444,0){0.00555556}{0}{180}

\psarc(0.95,0){0.05}{0}{180}

\psarc(0.0454545,0){0.0454545}{0}{180}

\psarc(0.0954545,0){0.00454545}{0}{180}

\psarc(0.472136,0){0.00154799}{0}{180}

\psarc(0.486842,0){0.0131579}{0}{180}

\psarc(0.513158,0){0.0131579}{0}{180}

\psarc(0.527864,0){0.00154799}{0}{180}

\psarc(0.904545,0){0.00454545}{0}{180}

\psarc(0.954545,0){0.0454545}{0}{180}

\psarc(0.0416667,0){0.0416667}{0}{180}

\psarc(0.0871212,0){0.00378788}{0}{180}

\psarc(0.474937,0){0.00125313}{0}{180}

\psarc(0.488095,0){0.0119048}{0}{180}

\psarc(0.511905,0){0.0119048}{0}{180}

\psarc(0.525063,0){0.00125313}{0}{180}

\psarc(0.912879,0){0.00378788}{0}{180}

\psarc(0.958333,0){0.0416667}{0}{180}

\psarc(0.0384615,0){0.0384615}{0}{180}

\psarc(0.0801282,0){0.00320513}{0}{180}

\psarc(0.919872,0){0.00320513}{0}{180}

\psarc(0.961538,0){0.0384615}{0}{180}

\psarc(0.0357143,0){0.0357143}{0}{180}

\psarc(0.0741758,0){0.00274725}{0}{180}

\psarc(0.925824,0){0.00274725}{0}{180}

\psarc(0.964286,0){0.0357143}{0}{180}

\psarc(0.0333333,0){0.0333333}{0}{180}

\psarc(0.0690476,0){0.00238095}{0}{180}

\psarc(0.930952,0){0.00238095}{0}{180}

\psarc(0.966667,0){0.0333333}{0}{180}

\psarc(0.03125,0){0.03125}{0}{180}

\psarc(0.0645833,0){0.00208333}{0}{180}

\psarc(0.935417,0){0.00208333}{0}{180}

\psarc(0.96875,0){0.03125}{0}{180}

\psarc(0.0294118,0){0.0294118}{0}{180}

\psarc(0.0606618,0){0.00183824}{0}{180}

\psarc(0.939338,0){0.00183824}{0}{180}

\psarc(0.970588,0){0.0294118}{0}{180}

\psarc(0.0277778,0){0.0277778}{0}{180}

\psarc(0.0571895,0){0.00163399}{0}{180}

\psarc(0.94281,0){0.00163399}{0}{180}

\psarc(0.972222,0){0.0277778}{0}{180}

\psarc(0.0263158,0){0.0263158}{0}{180}

\psarc(0.0540936,0){0.00146199}{0}{180}

\psarc(0.945906,0){0.00146199}{0}{180}

\psarc(0.973684,0){0.0263158}{0}{180}

\psarc(0.025,0){0.025}{0}{180}

\psarc(0.0513158,0){0.00131579}{0}{180}

\psarc(0.948684,0){0.00131579}{0}{180}

\psarc(0.975,0){0.025}{0}{180}

\psarc(0.0238095,0){0.0238095}{0}{180}

\psarc(0.0488095,0){0.00119048}{0}{180}

\psarc(0.95119,0){0.00119048}{0}{180}

\psarc(0.97619,0){0.0238095}{0}{180}

\psarc(0.0227273,0){0.0227273}{0}{180}

\psarc(0.0465368,0){0.00108225}{0}{180}

\psarc(0.953463,0){0.00108225}{0}{180}

\psarc(0.977273,0){0.0227273}{0}{180}

\psarc(0.0217391,0){0.0217391}{0}{180}

\psarc(0.0444664,0){0.000988142}{0}{180}

\psarc(0.955534,0){0.000988142}{0}{180}

\psarc(0.978261,0){0.0217391}{0}{180}

\psarc(0.0208333,0){0.0208333}{0}{180}

\psarc(0.0425725,0){0.000905797}{0}{180}

\psarc(0.957428,0){0.000905797}{0}{180}

\psarc(0.979167,0){0.0208333}{0}{180}

\psarc(0.02,0){0.02}{0}{180}

\psarc(0.0408333,0){0.000833333}{0}{180}

\psarc(0.959167,0){0.000833333}{0}{180}

\psarc(0.98,0){0.02}{0}{180}

\psarc(0.0192308,0){0.0192308}{0}{180}

\psarc(0.0392308,0){0.000769231}{0}{180}

\psarc(0.960769,0){0.000769231}{0}{180}

\psarc(0.980769,0){0.0192308}{0}{180}

\psarc(0.0185185,0){0.0185185}{0}{180}

\psarc(0.0377493,0){0.000712251}{0}{180}

\psarc(0.962251,0){0.000712251}{0}{180}

\psarc(0.981481,0){0.0185185}{0}{180}

\psarc(0.0178571,0){0.0178571}{0}{180}

\psarc(0.0363757,0){0.000661376}{0}{180}

\psarc(0.963624,0){0.000661376}{0}{180}

\psarc(0.982143,0){0.0178571}{0}{180}

\psarc(0.0172414,0){0.0172414}{0}{180}

\psarc(0.0350985,0){0.000615764}{0}{180}

\psarc(0.964901,0){0.000615764}{0}{180}

\psarc(0.982759,0){0.0172414}{0}{180}

\psarc(0.0166667,0){0.0166667}{0}{180}

\psarc(0.033908,0){0.000574713}{0}{180}

\psarc(0.966092,0){0.000574713}{0}{180}

\psarc(0.983333,0){0.0166667}{0}{180}

\psarc(0.016129,0){0.016129}{0}{180}

\psarc(0.0327957,0){0.000537634}{0}{180}

\psarc(0.967204,0){0.000537634}{0}{180}

\psarc(0.983871,0){0.016129}{0}{180}

\psarc(0.015625,0){0.015625}{0}{180}

\psarc(0.031754,0){0.000504032}{0}{180}

\psarc(0.968246,0){0.000504032}{0}{180}

\psarc(0.984375,0){0.015625}{0}{180}

\psarc(0.0151515,0){0.0151515}{0}{180}

\psarc(0.0307765,0){0.000473485}{0}{180}

\psarc(0.969223,0){0.000473485}{0}{180}

\psarc(0.984848,0){0.0151515}{0}{180}

\psarc(0.0147059,0){0.0147059}{0}{180}

\psarc(0.0298574,0){0.000445633}{0}{180}

\psarc(0.970143,0){0.000445633}{0}{180}

\psarc(0.985294,0){0.0147059}{0}{180}

\psarc(0.0142857,0){0.0142857}{0}{180}

\psarc(0.0289916,0){0.000420168}{0}{180}

\psarc(0.971008,0){0.000420168}{0}{180}

\psarc(0.985714,0){0.0142857}{0}{180}

\psarc(0.0138889,0){0.0138889}{0}{180}

\psarc(0.0281746,0){0.000396825}{0}{180}

\psarc(0.971825,0){0.000396825}{0}{180}

\psarc(0.986111,0){0.0138889}{0}{180}

\psarc(0.0135135,0){0.0135135}{0}{180}

\psarc(0.0274024,0){0.000375375}{0}{180}

\psarc(0.972598,0){0.000375375}{0}{180}

\psarc(0.986486,0){0.0135135}{0}{180}

\psarc(0.0131579,0){0.0131579}{0}{180}

\psarc(0.0266714,0){0.000355619}{0}{180}

\psarc(0.973329,0){0.000355619}{0}{180}

\psarc(0.986842,0){0.0131579}{0}{180}

\psarc(0.0128205,0){0.0128205}{0}{180}

\psarc(0.0259784,0){0.000337382}{0}{180}

\psarc(0.974022,0){0.000337382}{0}{180}

\psarc(0.987179,0){0.0128205}{0}{180}

\psarc(0.0125,0){0.0125}{0}{180}

\psarc(0.0253205,0){0.000320513}{0}{180}

\psarc(0.974679,0){0.000320513}{0}{180}

\psarc(0.9875,0){0.0125}{0}{180}

\psarc[fillstyle=solid,fillcolor=red](0.3875,0){0.0125}{0}{180}

\psarc[fillstyle=solid,fillcolor=red](0.6125,0){0.0125}{0}{180}

\psarc[fillstyle=solid,fillcolor=red](0.211111,0){0.0111111}{0}{180}

\psarc[fillstyle=solid,fillcolor=red](0.261364,0){0.0113636}{0}{180}

\psarc[fillstyle=solid,fillcolor=red](0.279221,0){0.00649351}{0}{180}

\psarc[fillstyle=solid,fillcolor=red](0.292857,0){0.00714286}{0}{180}

\psarc[fillstyle=solid,fillcolor=red](0.369318,0){0.00568182}{0}{180}

\psarc[fillstyle=solid,fillcolor=red](0.408333,0){0.00833333}{0}{180}

\psarc[fillstyle=solid,fillcolor=red](0.422619,0){0.00595238}{0}{180}

\psarc[fillstyle=solid,fillcolor=red](0.436508,0){0.00793651}{0}{180}

\psarc[fillstyle=solid,fillcolor=red](0.563492,0){0.00793651}{0}{180}

\psarc[fillstyle=solid,fillcolor=red](0.577381,0){0.00595238}{0}{180}

\psarc[fillstyle=solid,fillcolor=red](0.591667,0){0.00833333}{0}{180}

\psarc[fillstyle=solid,fillcolor=red](0.630682,0){0.00568182}{0}{180}

\psarc[fillstyle=solid,fillcolor=red](0.707143,0){0.00714286}{0}{180}

\psarc[fillstyle=solid,fillcolor=red](0.720779,0){0.00649351}{0}{180}

\psarc[fillstyle=solid,fillcolor=red](0.738636,0){0.0113636}{0}{180}

\psarc[fillstyle=solid,fillcolor=red](0.788889,0){0.0111111}{0}{180}

\psarc[fillstyle=solid,fillcolor=red](0.154762,0){0.0119048}{0}{180}

\psarc[fillstyle=solid,fillcolor=red](0.174242,0){0.00757576}{0}{180}

\psarc[fillstyle=solid,fillcolor=red](0.190909,0){0.00909091}{0}{180}

\psarc[fillstyle=solid,fillcolor=red](0.226496,0){0.0042735}{0}{180}

\psarc[fillstyle=solid,fillcolor=red](0.240385,0){0.00961538}{0}{180}

\psarc[fillstyle=solid,fillcolor=red](0.303846,0){0.00384615}{0}{180}

\psarc[fillstyle=solid,fillcolor=red](0.345238,0){0.0119048}{0}{180}

\psarc[fillstyle=solid,fillcolor=red](0.36039,0){0.00324675}{0}{180}

\psarc[fillstyle=solid,fillcolor=red](0.449495,0){0.00505051}{0}{180}

\psarc[fillstyle=solid,fillcolor=red](0.550505,0){0.00505051}{0}{180}

\psarc[fillstyle=solid,fillcolor=red](0.63961,0){0.00324675}{0}{180}

\psarc[fillstyle=solid,fillcolor=red](0.654762,0){0.0119048}{0}{180}

\psarc[fillstyle=solid,fillcolor=red](0.696154,0){0.00384615}{0}{180}

\psarc[fillstyle=solid,fillcolor=red](0.759615,0){0.00961538}{0}{180}

\psarc[fillstyle=solid,fillcolor=red](0.773504,0){0.0042735}{0}{180}

\psarc[fillstyle=solid,fillcolor=red](0.809091,0){0.00909091}{0}{180}

\psarc[fillstyle=solid,fillcolor=red](0.825758,0){0.00757576}{0}{180}

\psarc[fillstyle=solid,fillcolor=red](0.845238,0){0.0119048}{0}{180}

\psarc[fillstyle=solid,fillcolor=red](0.133929,0){0.00892857}{0}{180}

\psarc[fillstyle=solid,fillcolor=red](0.310096,0){0.00240385}{0}{180}

\psarc[fillstyle=solid,fillcolor=red](0.322917,0){0.0104167}{0}{180}

\psarc[fillstyle=solid,fillcolor=red](0.458042,0){0.0034965}{0}{180}

\psarc[fillstyle=solid,fillcolor=red](0.541958,0){0.0034965}{0}{180}

\psarc[fillstyle=solid,fillcolor=red](0.677083,0){0.0104167}{0}{180}

\psarc[fillstyle=solid,fillcolor=red](0.689904,0){0.00240385}{0}{180}

\psarc[fillstyle=solid,fillcolor=red](0.866071,0){0.00892857}{0}{180}

\psarc[fillstyle=solid,fillcolor=red](0.118056,0){0.00694444}{0}{180}

\psarc[fillstyle=solid,fillcolor=red](0.464103,0){0.0025641}{0}{180}

\psarc[fillstyle=solid,fillcolor=red](0.535897,0){0.0025641}{0}{180}

\psarc[fillstyle=solid,fillcolor=red](0.881944,0){0.00694444}{0}{180}

\psarc[fillstyle=solid,fillcolor=red](0.105556,0){0.00555556}{0}{180}

\psarc[fillstyle=solid,fillcolor=red](0.468627,0){0.00196078}{0}{180}

\psarc[fillstyle=solid,fillcolor=red](0.531373,0){0.00196078}{0}{180}

\psarc[fillstyle=solid,fillcolor=red](0.894444,0){0.00555556}{0}{180}

\psarc[fillstyle=solid,fillcolor=red](0.0954545,0){0.00454545}{0}{180}

\psarc[fillstyle=solid,fillcolor=red](0.472136,0){0.00154799}{0}{180}

\psarc[fillstyle=solid,fillcolor=red](0.527864,0){0.00154799}{0}{180}

\psarc[fillstyle=solid,fillcolor=red](0.904545,0){0.00454545}{0}{180}

\psarc[fillstyle=solid,fillcolor=red](0.0871212,0){0.00378788}{0}{180}

\psarc[fillstyle=solid,fillcolor=red](0.474937,0){0.00125313}{0}{180}

\psarc[fillstyle=solid,fillcolor=red](0.488095,0){0.0119048}{0}{180}

\psarc[fillstyle=solid,fillcolor=red](0.511905,0){0.0119048}{0}{180}

\psarc[fillstyle=solid,fillcolor=red](0.525063,0){0.00125313}{0}{180}

\psarc[fillstyle=solid,fillcolor=red](0.912879,0){0.00378788}{0}{180}

\psarc[fillstyle=solid,fillcolor=red](0.0801282,0){0.00320513}{0}{180}

\psarc[fillstyle=solid,fillcolor=red](0.919872,0){0.00320513}{0}{180}

\psarc[fillstyle=solid,fillcolor=red](0.0741758,0){0.00274725}{0}{180}

\psarc[fillstyle=solid,fillcolor=red](0.925824,0){0.00274725}{0}{180}

\psarc[fillstyle=solid,fillcolor=red](0.0690476,0){0.00238095}{0}{180}

\psarc[fillstyle=solid,fillcolor=red](0.930952,0){0.00238095}{0}{180}

\psarc[fillstyle=solid,fillcolor=red](0.0645833,0){0.00208333}{0}{180}

\psarc[fillstyle=solid,fillcolor=red](0.935417,0){0.00208333}{0}{180}

\psarc[fillstyle=solid,fillcolor=red](0.0606618,0){0.00183824}{0}{180}

\psarc[fillstyle=solid,fillcolor=red](0.939338,0){0.00183824}{0}{180}

\psarc[fillstyle=solid,fillcolor=red](0.0571895,0){0.00163399}{0}{180}

\psarc[fillstyle=solid,fillcolor=red](0.94281,0){0.00163399}{0}{180}

\psarc[fillstyle=solid,fillcolor=red](0.0540936,0){0.00146199}{0}{180}

\psarc[fillstyle=solid,fillcolor=red](0.945906,0){0.00146199}{0}{180}

\psarc[fillstyle=solid,fillcolor=red](0.0513158,0){0.00131579}{0}{180}

\psarc[fillstyle=solid,fillcolor=red](0.948684,0){0.00131579}{0}{180}

\psarc[fillstyle=solid,fillcolor=red](0.0488095,0){0.00119048}{0}{180}

\psarc[fillstyle=solid,fillcolor=red](0.95119,0){0.00119048}{0}{180}

\psarc[fillstyle=solid,fillcolor=red](0.0465368,0){0.00108225}{0}{180}

\psarc[fillstyle=solid,fillcolor=red](0.953463,0){0.00108225}{0}{180}

\psarc[fillstyle=solid,fillcolor=red](0.0444664,0){0.000988142}{0}{180}

\psarc[fillstyle=solid,fillcolor=red](0.955534,0){0.000988142}{0}{180}

\psarc[fillstyle=solid,fillcolor=red](0.0425725,0){0.000905797}{0}{180}

\psarc[fillstyle=solid,fillcolor=red](0.957428,0){0.000905797}{0}{180}

\psarc[fillstyle=solid,fillcolor=red](0.0408333,0){0.000833333}{0}{180}

\psarc[fillstyle=solid,fillcolor=red](0.959167,0){0.000833333}{0}{180}

\psarc[fillstyle=solid,fillcolor=red](0.0392308,0){0.000769231}{0}{180}

\psarc[fillstyle=solid,fillcolor=red](0.960769,0){0.000769231}{0}{180}

\psarc[fillstyle=solid,fillcolor=red](0.0377493,0){0.000712251}{0}{180}

\psarc[fillstyle=solid,fillcolor=red](0.962251,0){0.000712251}{0}{180}

\psarc[fillstyle=solid,fillcolor=red](0.0363757,0){0.000661376}{0}{180}

\psarc[fillstyle=solid,fillcolor=red](0.963624,0){0.000661376}{0}{180}

\psarc[fillstyle=solid,fillcolor=red](0.0350985,0){0.000615764}{0}{180}

\psarc[fillstyle=solid,fillcolor=red](0.964901,0){0.000615764}{0}{180}

\psarc[fillstyle=solid,fillcolor=red](0.033908,0){0.000574713}{0}{180}

\psarc[fillstyle=solid,fillcolor=red](0.966092,0){0.000574713}{0}{180}

\psarc[fillstyle=solid,fillcolor=red](0.0327957,0){0.000537634}{0}{180}

\psarc[fillstyle=solid,fillcolor=red](0.967204,0){0.000537634}{0}{180}

\psarc[fillstyle=solid,fillcolor=red](0.031754,0){0.000504032}{0}{180}

\psarc[fillstyle=solid,fillcolor=red](0.968246,0){0.000504032}{0}{180}

\psarc[fillstyle=solid,fillcolor=red](0.0307765,0){0.000473485}{0}{180}

\psarc[fillstyle=solid,fillcolor=red](0.969223,0){0.000473485}{0}{180}

\psarc[fillstyle=solid,fillcolor=red](0.0298574,0){0.000445633}{0}{180}

\psarc[fillstyle=solid,fillcolor=red](0.970143,0){0.000445633}{0}{180}

\psarc[fillstyle=solid,fillcolor=red](0.0289916,0){0.000420168}{0}{180}

\psarc[fillstyle=solid,fillcolor=red](0.971008,0){0.000420168}{0}{180}

\psarc[fillstyle=solid,fillcolor=red](0.0281746,0){0.000396825}{0}{180}

\psarc[fillstyle=solid,fillcolor=red](0.971825,0){0.000396825}{0}{180}

\psarc[fillstyle=solid,fillcolor=red](0.0274024,0){0.000375375}{0}{180}

\psarc[fillstyle=solid,fillcolor=red](0.972598,0){0.000375375}{0}{180}

\psarc[fillstyle=solid,fillcolor=red](0.0266714,0){0.000355619}{0}{180}

\psarc[fillstyle=solid,fillcolor=red](0.973329,0){0.000355619}{0}{180}

\psarc[fillstyle=solid,fillcolor=red](0.0259784,0){0.000337382}{0}{180}

\psarc[fillstyle=solid,fillcolor=red](0.974022,0){0.000337382}{0}{180}

\psarc[fillstyle=solid,fillcolor=red](0.0125,0){0.0125}{0}{180}

\psarc[fillstyle=solid,fillcolor=red](0.0253205,0){0.000320513}{0}{180}

\psarc[fillstyle=solid,fillcolor=red](0.974679,0){0.000320513}{0}{180}

\psarc[fillstyle=solid,fillcolor=red](0.9875,0){0.0125}{0}{180}
\end{pspicture}}

\pspolygon[linecolor=white,fillstyle=solid,fillcolor=white](-0.7,-0.1)(-0.7,0.6)(-1.1,0.6)(-1.1,-0.1)(-0.7,-0.1)
\pspolygon[linecolor=white,fillstyle=solid,fillcolor=white](1.9,-0.1)(1.9,0.6)(2.1,0.6)(2.1,-0.1)(1.9,-0.1)

\rput(0,-0.07){$_{0}$}
\rput(1,-0.07){$_{1}$}

\rput(1.41421,-0.07){$_{\sqrt{2}}$}

\rput(-0.06,0.6){$\mathbb I$}

\psset{linewidth=0.9pt}
\psset{dotsize=3pt 0,dotstyle=*}

\psarc[linecolor=orange](0.68,0){0.734214}{0}{157.844}
\psdot[linecolor=orange](0,0.276893)

\newrgbcolor{red}{1 0 0}

\rput(0.6,0.66){$\red{\mathcal L}$}
\rput(1.12,0.51){$\red{\mathcal R}$}
\rput(1.27,0.33){$\red{_{\mathcal R}}$}
\rput(1.36,0.14){$\red{_{\mathcal L}}$}

\rput(0.31,0.71){$\psi$}

\psset{arrowscale=1.9,arrowinset=0.3,arrowlength=1.1}
\psline[linecolor=orange]{->}(0.369708, 0.665424)(0.375526, 0.668106)

\end{pspicture}
\caption{The Farey tiling of $\H$ and a cutting sequence for $\sqrt{2}$}
\label{far}
\end{figure}
These Farey edges  do not contain their endpoints, do not intersect one another,  and have a natural partial ordering by containment of their associated closed semidiscs or  right half planes if vertical.  
We may also say that an edge  contains a set in $\C$ if its associated closed semidisc or  right half plane contains it.

Let $z=x+i y$ have $x>0$, $y\gqs 0$. Consider the oriented geodesic $\psi$ from some $w\in \mathbb I$ to $z$. Then the  Farey edges that  intersect $\psi$ are exactly  the edges contained by  $\mathbb I$  that contain $z$. Hence this set  is independent of $w$. Also $\psi$ must meet them in decreasing order when going from $w$ to $z$. Suppose we have two consecutive Farey edge intersection points on $\psi$, making a segment. This segment of $\psi$ divides a Farey triangle in two, with one corner on one side and two on the other. In the direction we are moving along $\psi$, we label the segment $\mathcal L$ if there is only one  corner to the left, or  $\mathcal R$ if there is only one  corner to the right.

So for each $z=x+i y$ with $x>0$, $y\gqs 0$, by including all such segments from $w$ to $z$ we obtain
 the {\em cutting sequence of $z$}:
\begin{equation} \la{cut}
  \mathcal L^{c_0}\mathcal R^{c_1}\mathcal L^{c_2}\mathcal R^{c_3} \cdots ,
\end{equation}
which may be finite or infinite, with positive integers $c_j$, (and $c_0=0$ is possible). Note that if $z$ is in the interior of a Farey triangle then the segment of $\psi$ inside this triangle does not contribute anything to the cutting sequence. We also define the cutting sequence of $z \in \mathbb I$ to be $ \mathcal L^{0}$. Our definition extends the one of Series in \cite[p. 24]{ser} for real numbers. For geodesics $\psi$ in $\H$ that reach their endpoints on $\R$,  their cutting sequences may be extended to the left. This allows a characterization of geodesics up to $\SL(2,\Z)$ equivalence as described in \cite{serb}. See also the wide-ranging survey \cite{kat}.

Figure \ref{far} shows the cutting sequence for $z=\sqrt{2}$. Peering closely, it begins $\mathcal L^{1}\mathcal R^{2}\mathcal L^{2}\mathcal R^{2} \cdots$.
These  sequences for real numbers go back to earlier authors with  Hurwitz in \cite[Sect. 5]{hur94}, for example, giving the following formulation. Start with the Farey fractions $0/1$ and $1/0$. To find rational approximations to $x>0$, repeatedly take the mediant and choose the subinterval containing $x$. Use $\mathcal L$ and $\mathcal R$ to record if this subinterval was  the one on the left or the right when viewed from $\mathbb I$. For our extension to $z \in \H$, choose  the subinterval whose corresponding Farey edge contains $z$.

\SpecialCoor
\psset{griddots=5,subgriddiv=0,gridlabels=0pt}
\psset{xunit=2.5cm, yunit=2.5cm, runit=2.5cm}
\psset{linewidth=1pt}
\psset{dotsize=7pt 0,dotstyle=*}
\begin{figure}[ht]
\centering
\begin{pspicture}(-0.7,-0.2)(5.25,1.85) 

\psset{arrowscale=1.4,arrowinset=0.3,arrowlength=1.1}
\newrgbcolor{light}{0.8 0.8 1.0}
\newrgbcolor{pale}{1 0.7 1}
\newrgbcolor{pale}{1 0.7 0.4}
\newrgbcolor{pale}{0.9179 0.7539 0.5781}
\newrgbcolor{pale}{0.996094, 0.71875, 0.617188}
\newrgbcolor{greendark}{0.1 0.1 1.0}
\newrgbcolor{greenlight}{0.8 0.8 1.0} 

\newrgbcolor{cx}{0.8 0.8 1.0}


\rput(0.5,0.8){
\begin{pspicture}(-1,-0.1)(2,1.7) 

\psset{linecolor=greendark}

\pspolygon[linecolor=greenlight,fillstyle=solid,fillcolor=greenlight](-0.5,1.7)(-0.5,0)(0.5,0)(0.5,1.7)(-0.5,1.7)

\pswedge[linecolor=white,fillstyle=solid,fillcolor=white](-1,0){1}{0}{90}
\pswedge[linecolor=white,fillstyle=solid,fillcolor=white](1,0){1}{90}{180}

\psarc(-1,0){1}{0}{60}
\psarc(1,0){1}{120}{180}

\psarc(0,0){1}{0}{60}
\psarc(2,0){1}{120}{180}

\psarc(0.3333,0){0.3333}{60}{180}
\psarc(0.6666,0){0.3333}{0}{120}

\psline(-0.5,1.7)(-0.5,0.866025)
\psline(0.5,1.7)(0.5,0.866025)

\psline(1.5,1.7)(1.5,0.866025)

\psset{linecolor=black}

\psline(0,1.7)(0,0)
\psline(1,1.7)(1,0)
\psarc(0.5,0){0.5}{0}{180}

\psline[linecolor=gray](-0.7,0)(1.7,0)
\psline[linecolor=gray](-0.5,0.04)(-0.5,-0.04)
\psline[linecolor=gray](0.5,0.04)(0.5,-0.04)
\psline[linecolor=gray](1.5,0.04)(1.5,-0.04)


\psline{->}(2.,0.5)(2.5,0.5)
\rput(2.25,0.63){$M$}

\rput(-0.65,1){$\mathcal Y$}
\rput(1.7,1){$L\mathcal Y$}
\rput(0.6,0.18){$R\mathcal Y$}

\rput(-0.12,1.4){$\mathbb I$}

\rput(0,-0.12){$_0$}
\rput(1,-0.12){$_1$}

\end{pspicture}}

\rput(4,0.8){
\begin{pspicture}(-1.1,-0.1)(1.31,1.7) 

\psset{linecolor=greendark}

\psarc(1.68831,0){2.5974}{139.653}{180}
\psarc(-0.598291,0){1.7094}{0}{79.6529}
\psarc(0.394265,0){0.716846}{0.}{126.609}
\psarc(-0.282132,0){0.626959}{66.6086}{180.}
\psarc(-5.,0){5.}{0.}{6.60861}
\psarc(0.263158,0){0.263158}{84.705}{180.}
\psarc(0.657596,0){0.453515}{0.}{144.705}
\psarc(-0.552585,0){0.356506}{40.9693}{180.}
\psarc(-0.238095,0){0.238095}{0.}{100.969}

\psset{linecolor=black}

\psarc(-0.454545,0){0.454545}{0}{180}
\psarc(0.55555,0){0.55555}{0}{180}
\psarc(0.10101,0){1.0101}{0}{180}

\psline[linecolor=gray](-1.1,0)(1.31,0)

\rput(-0.8,1.4){$M\mathcal Y$}
\rput(-0.55,0.14){$M R\mathcal Y$}
\rput(0.6,0.19){$M L\mathcal Y$}

\end{pspicture}}

\end{pspicture}
\caption{Images of $\mathbb I$ and of $\mathcal Y := \Fd \cup S\Fd$}
\label{funds2}
\end{figure}

\begin{lemma} \la{ser}
Let $z$ have  real part $\gqs 0$ and lie on a Farey edge. The following are true:
\begin{enumerate}
  \item We have $z \in M \mathbb I$ for some $M=L^{a_0}R^{a_1} \cdots [L \text{ or } R]^{a_m}$ with  positive integers $a_j$, ($a_0=0$ is possible).
  \item  The cutting sequence for $z$ is finite and may  be written as $\mathcal L^{c_0}\mathcal R^{c_1} \cdots [\mathcal L \text{ or } \mathcal R]^{c_n}$. 
  \item In parts (i) and (ii) we have $m=n$ and $a_j=c_j$ for all $j$.
\end{enumerate}
\end{lemma}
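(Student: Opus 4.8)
The plan is to prove all three parts together, reducing everything to a single geometric peeling step that simultaneously removes one letter from the matrix word and one symbol from the cutting sequence. I would first dispose of part (i) using the generator theory already in hand. Since every Farey edge is the image of $\mathbb I$ under some element of $\SL(2,\Z)$, and since $\mathbb I$ is itself a Farey edge meeting no other Farey edge, a point $z$ with $\Re(z)\geq 0$ lying on a Farey edge lies either on $\mathbb I$ or on an edge contained in the half-plane $\Re>0$. Writing that edge as $M\mathbb I$ and expanding $M$ by Theorem \ref{us2} as $\pm(S)^i L^{a_0}R^{a_1}\cdots[L\text{ or }R]^{a_m}(S)^j$, the factors $\pm I$ and the trailing $(S)$ fix $\mathbb I$, so the edge equals $(S)^i W\mathbb I$ with $W=L^{a_0}R^{a_1}\cdots$ a positive word. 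A positive word has nonnegative entries, so $W\mathbb I$ has both endpoints in $[0,\infty]$ while $SW\mathbb I$ has both endpoints in $[-\infty,0]$; the hypothesis $\Re(z)\geq 0$ forces $i=0$, giving part (i), and the uniqueness in Theorem \ref{us2} makes $W$ (hence $m$ and the $a_j$) unique.

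For parts (ii) and (iii) I would induct on the word length $N:=a_0+a_1+\cdots+a_m$. When $N=0$ we have $M=I$, $z\in\mathbb I$, and the cutting sequence is $\mathcal L^0$ by definition, matching. For $N\geq 1$, let $X\in\{L,R\}$ be the first letter of $W$ (so $X=L$ when $a_0\geq 1$ and $X=R$ when $a_0=0$) and write $W=XW'$ with $W'$ a positive word of length $N-1$. The edge $W'\mathbb I$ has endpoints in $[0,\infty]$, hence is contained by $\mathbb I$; applying the tessellation-preserving isometry $X$ shows that $z\in W\mathbb I$ is contained by $X\mathbb I$. Now $L\mathbb I$ is the vertical line $\Re=1$ and $R\mathbb I$ is the semicircle from $0$ to $1$, and these are the two maximal Farey edges properly below $\mathbb I$; since their semidiscs meet only at the vertex $z=1$, $z$ is contained by exactly one of them, namely $X\mathbb I$. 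The oriented geodesic from $\mathbb I$ to $z$ therefore first crosses $X\mathbb I$, and its initial segment lies in the Farey triangle with vertices $0,1,\infty$; a direct inspection of the orientation there (the lone corner being $\infty$ when $X=L$ and $0$ when $X=R$, as in Figure \ref{funds2}) shows the first symbol of the cutting sequence is $\mathcal L$ exactly when $X=L$ and $\mathcal R$ exactly when $X=R$.

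I would then set $z':=X^{-1}z$. A short computation shows $\Re(z')\geq 0$ (indeed $\Re(L^{-1}z)=\Re(z)-1\geq 0$, while $R^{-1}$ carries the semidisc of $R\mathbb I$ into the half-plane $\Re\geq 0$), and $z'$ lies on the edge $W'\mathbb I$, a positive-word edge of length $N-1$. Because $X^{-1}$ is an orientation-preserving isometry fixing the Farey tessellation and sending $X\mathbb I$ to $\mathbb I$, it carries the crossings of the $z$-geodesic after the first bijectively onto all crossings of the $z'$-geodesic, preserving their left/right labels; hence the cutting sequence of $z$ is the first symbol followed by that of $z'$. Applying the inductive hypothesis to $z'$ gives that its cutting sequence is $\mathcal L^{a_0'}\mathcal R^{a_1'}\cdots$ with exponents equal to the letter-counts of $W'$, and prepending the first symbol restores the exponents of $W$. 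This yields $m=n$ and $a_j=c_j$ for all $j$, proving (ii) and (iii).

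The main obstacle will be the orientation bookkeeping in the inductive step: first, pinning down that the geometric left/right label of the initial segment agrees with the algebraic choice of $L$ versus $R$ (a convention-matching check inside the triangle $0,1,\infty$), and second, justifying cleanly that conjugating by $X^{-1}$ shifts the entire cutting sequence by exactly one symbol rather than merely relabelling it. Both are intuitively clear from Figure \ref{funds2}, but writing them rigorously requires care with the half-open nature of Farey edges and with the case $a_0=0$, where the geodesic crosses $\mathbb I$ and then immediately $R\mathbb I$ with no $\mathcal L$ block at the front.
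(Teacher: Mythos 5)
Your proposal is correct, and it takes a genuinely different route from the paper's in both halves. For part (i) you derive the positive-word form of the edge from the uniqueness theorem, Theorem \ref{us2}: stripping the factors $\pm I$ and the trailing $(S)$, which stabilize $\mathbb I$, and then killing the leading $(S)$ by comparing the endpoints of $W\mathbb I$ (in $[0,\infty]$, since positive words have nonnegative entries) with those of $SW\mathbb I$ (in $[-\infty,0]$). The paper instead argues constructively, growing the tessellation: if $M\mathbb I$ is a Farey edge then the two lower sides of the triangle it contains are $ML\mathbb I$ and $MR\mathbb I$, with the mediant as the new vertex, so positive words enumerate all edges to the right of $\mathbb I$; this is slightly longer but sets up exactly the local fact needed later, namely that $\det M =1$ forces $ML\mathbb I$ to lie on the left and $MR\mathbb I$ on the right as seen from $\mathbb I$. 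For parts (ii)--(iii) your inductions run in opposite directions. The paper inducts on the containment ordering of edges and peels the \emph{last} letter: keeping the geodesic $\psi$ fixed, the intersection $z=g\cap \psi$ with the parent edge $g=M\mathbb I$ has, by induction, the cutting sequence matching $M$, and descending to the child edge $ML\mathbb I$ or $MR\mathbb I$ appends a single $\mathcal L$ or $\mathcal R$ --- one application of the orientation fact, with no change of geodesic or basepoint. You peel the \emph{first} letter and renormalize by $X^{-1}$, which is the shift map of the symbolic dynamics; this forces you to verify the two points you correctly flag: that the initial segment inside the triangle with vertices $0,1,\infty$ is labelled according to $X$ (your corner computation is right, with lone corner $\infty$ for $L$ and $0$ for $R$), and that applying $X^{-1}\in\SL(2,\Z)$ deletes exactly the first symbol. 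The second point tacitly also uses that the labels, not just the crossing set, are independent of the basepoint $w\in\mathbb I$ --- which does hold, because each label is determined by the ordered pair of consecutive crossed edges (the lone corner is their common vertex), but it deserves a sentence since the paper only states independence for the crossing set. On balance, your argument buys a cleaner connection with the renormalization viewpoint behind Series's cutting sequences, at the cost of the conjugation bookkeeping; the paper's suffix-peeling buys economy, since the whole inductive step is one appended symbol along a fixed geodesic.
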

\begin{proof}
To show part $(i)$, start with $\mathbb I$ and its images $L \mathbb I$ and $R \mathbb I$ as displayed on the left of Figure \ref{funds2}.
These sides make a Farey triangle and to produce the adjacent triangles to the right and below, apply $L$ and $R$ again on the right. Precisely, let $M$ be a product of $L$s and $R$s with $M \mathbb I$ the edge seen on the right side of Figure \ref{funds2}. Then the other sides of the Farey triangle contained by this edge are $M L \mathbb I$ and $M R \mathbb I$. Note that since $\det M =1$, orientation is preserved so that $M L \mathbb I$ is on the left and $M R \mathbb I$ is on the right when viewed from $\mathbb I$. It can also be seen that the new vertex is the mediant of the endpoints of $M \mathbb I$. Starting with $\mathbb I$ we can produce all  Farey edges to the right in this way and they take the  form  stated in $(i)$.

Parts $(ii)$ and $(iii)$ may  be proved together by  induction on the ordering of the edges. They are true for $z \in \mathbb I$ with $L^0$ matching $\mathcal L^{0}$. Next let $z'$ be on another Farey edge $g'$. Fix a geodesic $\psi$ for its cutting sequence. Then $g'$ is on the boundary of two Farey triangles: one contained by $g'$ and a larger one. Let $g$ be the boundary  of the larger triangle, with $g$ containing $g'$. Then $g = M \mathbb I$ for $M=L^{a_0}R^{a_1} \cdots [L \text{ or } R]^{a_m}$ by part $(i)$.  Our induction hypothesis implies that $z :=g \cap \psi$ has cutting sequence $\mathcal L^{a_0}\mathcal R^{a_1} \cdots [\mathcal L \text{ or } \mathcal R]^{a_m}$. Now $g'$ equals  $M L \mathbb I$ or  $M R \mathbb I$ and these correspond to adding a factor $\mathcal L$ or $\mathcal R$, respectively, to the cutting sequence.
\end{proof}

The next lemma is a direct consequence of the previous one.
\begin{lemma} \la{ser2}
Let $z$ have  real part $\gqs 0$ and lie on a Farey edge.   Then  $\mathcal L^{c_0}\mathcal R^{c_1} \cdots [\mathcal L \text{ or } \mathcal R]^{c_n}$ is the cutting sequence for $z$ if and only if 
$
  z$ has the continued fraction $[c_0, c_1,  \dots, c_n+w_0]
$. 
We may also locate $w_0$ more precisely here: $w_0 \in \mathbb I$ if $n$ is even and   $w_0 \in \overline{\mathbb I}$ if $n$ is odd.
\end{lemma}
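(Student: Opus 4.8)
The plan is to read Lemma \ref{ser2} off Lemma \ref{ser} by translating its geometric conclusion into continued-fraction language through the operator description of our expansions. First I would apply Lemma \ref{ser} to $z$: since $z$ lies on a Farey edge and has $\Re(z)\gqs 0$, its cutting sequence is $\mathcal L^{c_0}\mathcal R^{c_1}\cdots[\mathcal L \text{ or } \mathcal R]^{c_n}$ if and only if $z\in M\mathbb I$ for $M=L^{c_0}R^{c_1}\cdots[L\text{ or }R]^{c_n}$, with identical exponents in the two presentations. Membership $z\in M\mathbb I$ means $z=Mu$ for a unique $u\in\mathbb I$, so the entire lemma reduces to matching $Mu$ against the value $[c_0,c_1,\dots,c_n+w_0]$.

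Next I would use the identity underlying Lemma \ref{fff} (equivalently the displays around \eqref{z1} in the proof of Theorem \ref{corn2}), namely $[c_0,c_1,\dots,c_n+w_0]=Mw_1$ where $w_1=w_0$ when $n$ is even and $w_1=Jw_0=1/w_0$ when $n$ is odd. As $M\in\SL(2,\Z)$ acts invertibly, $z=Mu=Mw_1$ holds exactly when $u=w_1$. For even $n$ this gives $w_0=u\in\mathbb I$ immediately; for odd $n$ it gives $w_0=Ju=1/u$, and since $1/(yi)=-i/y$ the involution $J$ sends $\mathbb I$ onto $\overline{\mathbb I}$, so $w_0\in\overline{\mathbb I}$. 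As a consistency check, \eqref{sgn} shows $z\in\H$ forces $\sgn(\Im(w_0))=(-1)^n$, positive (hence $\mathbb I$) for even $n$ and negative (hence $\overline{\mathbb I}$) for odd $n$.

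The equivalence in the lemma is then immediate: composing the ``if and only if'' of Lemma \ref{ser} with the bijection $u\leftrightarrow w_0$ just described gives both implications at once. To see that the right-hand side is a legitimate statement, I would note that in either parity $w_0$ lies on one of the imaginary half-axes, both of which are contained in $\Fd'=\Fd\cup S\Fd\cup\{0\}\cup-(\Fd\cup S\Fd)$ by \eqref{fdp} while avoiding $0$ and the corners $\rho^2,\rho^4$. Hence $w_0\in\Fd'-\{0,\rho^2,\rho^4\}$ and $[c_0,c_1,\dots,c_n+w_0]$ is a genuine (indeed canonical) continued fraction representation of $z$, unique by Theorem \ref{corn2}.

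I do not expect a serious obstacle, since the lemma is essentially a dictionary entry resting on Lemma \ref{ser}. The only steps needing real care are the bookkeeping at the terminal factor $J$: pinning down the sign of $\Im(w_0)$, verifying that $u\mapsto 1/u$ carries $\mathbb I$ exactly onto $\overline{\mathbb I}$, and checking that the resulting $w_0$ indeed lands in $\Fd'$ so the continued-fraction notation is meaningful and uniqueness applies.
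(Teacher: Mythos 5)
Your proof is correct and follows exactly the route the paper intends: the paper gives no written argument, stating only that the lemma is ``a direct consequence'' of Lemma \ref{ser}, and your writeup fills in precisely that derivation --- translating $z\in M\mathbb I$ into $[c_0,\dots,c_n+w_0]=Mw_1$ via the operator identity behind Lemma \ref{fff}, with the parity bookkeeping $w_1=w_0$ or $Jw_0$ and the sign check via \eqref{sgn}. Your added verifications (that $\mathbb I\cup\overline{\mathbb I}\subseteq\Fd'-\{0,\rho^2,\rho^4\}$, so the representation is canonical and Theorem \ref{corn2} applies) are exactly the details the paper leaves implicit.
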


The connection between cutting sequences and continued fractions for  real $z$ is shown in \cite[p. 24]{ser},  and see also \cite[Sects. 2.11 -- 2.12]{sta}. For general $z$ we have the following. 

\begin{theorem} \la{maincut}
Suppose $z=x+i y$ has $x, y\gqs 0$ and cutting sequence \e{cut}. If this sequence is infinite then $z=[c_0, c_1, c_2, \dots]$ and $z$ is an irrational real. If the cutting sequence is finite with last part $c_n$ then a continued fraction for $z$ is given by one  of the following:
\begin{equation}\label{adj}
  [c_0, c_1,  \dots, c_n+w_0], \quad [c_0, c_1,   \dots, c_n+1+w_0] \quad  \text{or} \quad [c_0, c_1,   \dots, c_n, 1+w_0].
\end{equation}
In particular, if $z$ is rational then it equals the 2nd and 3rd representations in \e{adj} with $w_0=0$, (the 2nd being canonical). More succinctly,
\begin{equation*}
  z=  [c_0, c_1,   \dots, c_n+1] \quad  \text{if} \quad z \in \Q.
\end{equation*}
\end{theorem}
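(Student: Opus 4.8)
The plan is to split on whether the cutting sequence \eqref{cut} is infinite or finite, pushing all the geometry through the single edge-level dictionary of Lemmas \ref{ser} and \ref{ser2}. For the infinite case I would first show that an infinite sequence forces $z$ to be real: a Farey edge containing $z$ is a semicircle on Farey-neighbour endpoints $p_1/q_1,p_2/q_2$, hence of diameter $1/(q_1q_2)$, so if $\Im z=y>0$ it can contain $z$ only when $q_1q_2\le 1/(2y)$; thus only finitely many Farey edges contain $z$ and the sequence terminates. An infinite sequence therefore gives $y=0$, and since a rational real is reached by the geodesic $\psi$ after finitely many crossings, $z$ must be an irrational real. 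Then Theorem \ref{genc} (the real case of the algorithm, matching the classical cutting-sequence result of \cite{ser}) gives $z=[c_0,c_1,c_2,\dots]$.

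For the finite case I would let $g$ be the smallest Farey edge containing $z$. By Lemma \ref{ser}, $g=M\mathbb I$ with $M=L^{c_0}R^{c_1}\cdots[L\text{ or }R]^{c_n}$; and because the final segment of $\psi$ running into the interior of the Farey triangle $\Delta$ bounded by $g$, $ML\mathbb I$, $MR\mathbb I$ contributes nothing, the cutting sequence of $z$ coincides with that of the crossing point $g\cap\psi$, namely $\mathcal L^{c_0}\mathcal R^{c_1}\cdots[\mathcal L\text{ or }\mathcal R]^{c_n}$. Setting $w:=M^{-1}z$ places $w$ in the closed Farey triangle with vertices $0,1,\infty$. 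The key geometric input is the covering of this triangle,
\[
  \{\,0\le\Re z\le 1,\ |z-\tfrac12|\ge\tfrac12\,\}\ \subseteq\ \operatorname{closure}(\Fd')\,\cup\,L\operatorname{closure}(\Fd')\,\cup\,R\operatorname{closure}(\Fd'),
\]
which I would check directly from \eqref{fdp2}: the part with $\Re z\le\tfrac12,\ |z-1|\ge1$ lies in $\operatorname{closure}(\Fd')$, the part with $\Re z\ge\tfrac12,\ |z|\ge1$ in $L\operatorname{closure}(\Fd')$, and the remaining central curvilinear triangle (cut off by the arcs $|z|=1$, $|z-1|=1$ and the base $|z-\tfrac12|=\tfrac12$) in $R\operatorname{closure}(\Fd')$. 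Hence $w=Nw_0$ with $N\in\{I,L,R\}$ and $w_0\in\operatorname{closure}(\Fd')$, so $z=MNw_0$.

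It remains to read $z=MNw_0$ off as a continued fraction. By \eqref{hk2} and the $L,R$-action of \eqref{lru}, the convergent matrix of $[c_0,\dots,c_n+{\,\cdot\,}]$ is $L^{c_0}\cdots[L\text{ or }R]^{c_n}$ when $n$ is even and that same product times $J$ when $n$ is odd (as in the relations at the start of the proof of Theorem \ref{corn2}); since $\Fd'$ is invariant under $z\mapsto 1/z$ by \eqref{fdp2}, the stray $J$ never leaves $\operatorname{closure}(\Fd')$. Tracking the parity of $n$ then matches the three choices $N=I$, $N=$ (extend the last run), $N=$ (append the opposite letter) to the three shapes of \eqref{adj}; the $N=I$ case is exactly Lemma \ref{ser2}, while the roles of $L$ and $R$ in the second and third forms interchange according to whether the last letter of $M$ is $L$ or $R$. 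For rational $z$ the geodesic terminates at the mediant vertex $M\cdot 1$ of $\Delta$, so $w=1$, which sits in the overlap of the $L$- and $R$-regions above; taking $w_0=0$ there collapses the second and third forms of \eqref{adj} to $[c_0,\dots,c_n+1]$ and $[c_0,\dots,c_n,1]$, both equal to $z$, the former being canonical since $c_n+1\ge 2$.

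I expect the main obstacle to be precisely this last matching: keeping straight, across the parity of $n$ and the last-letter ambiguity, the correspondence between the three tiles $I,L,R\cdot\operatorname{closure}(\Fd')$ and the three shapes in \eqref{adj}, together with the bookkeeping of the $J$ factor. Secondary care is needed in justifying that the smallest containing edge genuinely reproduces the cutting sequence (so that the discarded interior segment is correctly accounted for) and in confirming the covering of the Farey triangle; the real-variable and existence parts are routine given Lemmas \ref{ser} and \ref{ser2}.
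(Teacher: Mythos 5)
Your proof is correct, and on the finite case it is essentially the paper's own argument in slightly different clothing: the paper partitions the Farey triangle bounded by $M\mathbb I$, $ML\mathbb I$, $MR\mathbb I$ into its intersections with $M\mathcal Y$, $ML\mathcal Y$, $MR\mathcal Y$, where $\mathcal Y=\Fd\cup S\Fd$, and on $\H$ this is exactly your covering of the triangle with vertices $0,1,\infty$ by $\operatorname{closure}(\Fd')$, $L\operatorname{closure}(\Fd')$, $R\operatorname{closure}(\Fd')$; your verification of the covering from \e{fdp2}, and your parity/$J$ bookkeeping matching $N\in\{I,L,R\}$ to the three shapes of \e{adj}, simply make explicit what the paper compresses into ``these possible extra $R$ and $L$ factors give the continued fractions in \e{adj}'' together with Figure \ref{funds2}, and your rational case ($w=1$, $w_0\to 0$) agrees with the paper's limit argument. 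Where you genuinely diverge is the infinite case. The paper stays self-contained: it takes $z_n$ to be the point where $\psi$ crosses the edge $L^{c_0}R^{c_1}\cdots[L\text{ or }R]^{c_n}\mathbb I$, writes $z_n=[c_0,\dots,c_n+w_0]$ by Lemma \ref{ser2}, and then uses the quantitative bound of Proposition \ref{cplx} to conclude $\Im(z_n)\to 0$, $z_n\to z$, and hence $[c_0,\dots,c_n]\to z$ --- thereby rederiving Series' real-variable theorem with a Fibonacci-rate error estimate rather than citing it. You instead quote Series for $z=[c_0,c_1,c_2,\dots]$, which is legitimate (for real $z$ the definition here coincides with hers, and the paper itself points to \cite[p.~24]{ser}), but it imports the one classical fact the paper deliberately reproves; your additional appeal to Theorem \ref{genc} is then redundant. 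What your route buys in exchange is an honest proof of the dichotomy the paper asserts in a single line, namely that the cutting sequence is finite if and only if $z\in\Q\cup\H$: your diameter estimate (a semicircular edge joining $p_1/q_1$ and $p_2/q_2$ has diameter $1/(q_1q_2)$, so containing $z$ at height $y>0$ forces $q_1q_2\lqs 1/(2y)$), together with the finite Stern--Brocot depth of a rational, fills a real gap in the exposition --- just remember to include the vertical Farey edges $n+\mathbb I$, of which only those with $0\lqs n\lqs x$ contain $z$, so finiteness still holds.
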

\begin{proof}
Fix the geodesic $\psi$ from $\mathbb I$ to $z$. Since the endpoints of  Farey edges are in $\Q \cup \{\infty\}$, it follows that the cutting sequence of $z$ is finite if and only if $z\in \Q \cup \H$. In this finite case  let $z'\in \H$ be the point where $\psi$ meets its last Farey edge.

If $z=z'$ then we know by Lemma \ref{ser} that $z=M z_0$ for $M=L^{c_0}R^{c_1} \cdots [L \text{ or } R]^{c_n}$ and $z_0 \in \mathbb I$. Hence $z=[c_0, c_1,  \dots, c_n+w_0]$ for $w_0=z_0$ or $1/z_0$. Otherwise, if $z \notin \Q$ then $z$ is inside the Farey triangle seen on the right of Figure \ref{funds2} with boundaries $M \mathbb I$, $MR \mathbb I$ and $ML \mathbb I$. This triangle is shown partitioned into three pieces. With  $\mathcal Y := \Fd \cup S\Fd$,   these pieces are its intersections with $M \mathcal Y$, $MR \mathcal Y$ and $ML \mathcal Y$. These possible extra $R$ and $L$ factors give the continued fractions in \e{adj}. If $z \in \Q$ then $z$ is the common endpoint of $MR \mathbb I$ and $ML \mathbb I$. This is obtained with the limit as $w_0 \to 0$ in the   2nd and 3rd representations in \e{adj}.

Now suppose that $z$ is an irrational  real. Its cutting sequence \e{cut} is infinite. Let $z_n$ be the point on $\psi$ where it meets the geodesic $L^{c_0}R^{c_1} \cdots [L \text{ or } R]^{c_n} \mathbb I$. By Lemma \ref{ser2} we have
\begin{equation} \la{zn}
  z_n = [c_0, c_1,  \dots, c_n+w_0]
\quad \text{for} \quad w_0 \in \Fd'. 
\end{equation}
Also
\begin{equation}\la{nn}
  \Bigl| [c_0, c_1,  \dots, c_n+w_0] - [c_0, c_1,  \dots, c_n]\Bigr| < \frac 4{\phi^{2n}}
\end{equation}
by Proposition \ref{cplx}.
From \e{zn}, \e{nn} it first follows that $\Im(z_n) \to 0$ as $n \to \infty$ and hence $z_n \to z$. Then, secondly, we obtain $[c_0, c_1,  \dots, c_n] \to z$ as  $n \to \infty$.
\end{proof}

If $z\in \H$ is directly above an irrational number $r$ then the cutting sequence for $z$ will match the initial part of the sequence for $r$. It follows from Theorem \ref{maincut} that their continued fractions will then match. For example,
\begin{gather*}
  e+e^{-30}i  = [2, 1, 2, 1, 1, 4, 1, 1, 6, 1, 1, 8, 1, 1, 10, 1, 1, 12+z_0]\quad \text{for} \quad z_0 \approx 0.04-2.44 i, \\
  \frac{I_0(2)}{I_1(2)} + 10^{-16} i  = [1, 2, 3, 4, 5, 6, 7, 8, 9, 10, 11, 1 +z_0]\quad \text{for} \quad z_0 \approx -0.40 - 2.81 i, 
\end{gather*}
matching the initial parts of the continued fractions of $e$ and the Bessel ratio $I_0(2)/I_1(2) \approx 1.433127427$.

An appealing application of cutting sequences ends this section.

\begin{prop} \la{fnb}
Let $a/b$ and $c/d$ be Farey neighbors so that $ad-bc = \pm 1$. 
Suppose
\begin{equation*}
  a/b=[a_0, a_1, \dots, a_r], \qquad c/d=[c_0, c_1, \dots, c_s] \qquad \text{with} \quad 0\lqs r\lqs s
\end{equation*}
are canonical.
Then $r\lqs s \lqs r+2$ and these continued fractions agree for at least the first $r$ places. Precisely, $c/d$ equals one of the following, for some $m \gqs 2$:
\begin{equation*}
  [a_0,  \dots, a_r-1, 2], \qquad [a_0,  \dots, a_r-1, 1, m], \qquad 
  [a_0,  \dots, a_r \pm 1], \qquad [a_0,  \dots, a_r, m].
\end{equation*}
\end{prop}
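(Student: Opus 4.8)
The plan is to push everything through the cutting-sequence dictionary of Lemmas~\ref{ser} and \ref{ser2} and Theorem~\ref{maincut}. First I would reduce to the case where both $a/b$ and $c/d$ are $\geq 0$: applying $T^k$ sends $[a_0,\dots]\mapsto[a_0+k,\dots]$ to both expansions simultaneously, preserves the Farey-neighbor relation $ad-bc=\pm1$, and leaves each of the four target shapes invariant, so this normalization is harmless. In this range every rational $p/q$ is a Farey vertex and, by Theorem~\ref{maincut}, is the mediant $M1$ of a unique Farey edge $M\mathbb{I}$ with $M=L^{c_0}R^{c_1}\cdots[L\text{ or }R]^{c_n}$; its canonical continued fraction is then exactly $[c_0,\dots,c_n+1]$. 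Thus the canonical expansion of a rational is just its cutting-sequence word, read as an $L,R$-product, with the final exponent bumped by one.

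The geometric heart is to locate both endpoints of the shared edge inside this dictionary. Two Farey neighbors determine a unique common Farey edge $g=N\mathbb{I}$, whose reduced address $N=L^{e_0}R^{e_1}\cdots[L\text{ or }R]^{e_m}$ I obtain from Lemma~\ref{ser}(i); its endpoints are the cusps $N0$ and $N\infty$. The key step is the mediant recursion: whether $g$ is the left or the right child of its parent edge decides which endpoint is the parent's mediant — the ``new'' vertex, whose word is $N$ with its last letter deleted — and which is ``inherited'' from a higher ancestor, whose word is $N$ with its entire final block deleted and the exponent of the preceding block lowered by one. These two identities follow from the elementary actions $L^k0=k$, $R^k0=0$, $L^j1=j+1$, $R^j1=1/(j+1)$ on $0,1,\infty$, which are exactly the moves that build the Farey tree in Lemma~\ref{ser}.

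To finish I would convert both endpoint words to canonical form and compare. The normalization (a trailing $L^0$ or $R^0$ collapses and raises the previous term by one) means the comparison splits according to whether the last and next-to-last blocks of $N$ have length $1$ or $\geq2$, i.e.\ according to the sizes of $e_m$ and $e_{m-1}$. The four resulting combinations yield, in order, the four displayed shapes $[a_0,\dots,a_r-1,2]$, $[a_0,\dots,a_r-1,1,m]$, $[a_0,\dots,a_r\pm1]$ and $[a_0,\dots,a_r,m]$ with $m\geq2$; in each the two expansions share the initial terms $a_0,\dots,a_{r-1}$, the shorter expansion is the one labelled $a/b$, and the lengths obey $r\leq s\leq r+2$. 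The case when $g$ is a left child rather than a right child is identical after conjugating by $J\colon z\mapsto1/z$, which interchanges $L\leftrightarrow R$ and $0\leftrightarrow\infty$ and hence swaps the two endpoints.

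The main obstacle is the bookkeeping of canonicalization rather than any geometry: one must track carefully how the recursion's raw $L,R$-words normalize (including the collapse of zero-length blocks into the previous term), verify that the ``inherited'' endpoint is always the shorter of the two so that the labelling $r\leq s$ is consistent with the four shapes, and dispose of the degenerate cases where the addressing bottoms out — the vertex $0$ and the root edge $\mathbb{I}$ itself — which lie on the boundary of the $L,R$-addressing.
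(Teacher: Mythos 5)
Your proposal is correct, and it reaches the four shapes by a genuinely different mechanism than the paper's proof of Proposition~\ref{fnb}, even though both live inside the same cutting-sequence framework. The paper argues geodesically: after the same positivity normalization, it compares the two endpoints' cutting sequences directly — if the sequence of one neighbor ends $\cdots\mathcal{R}^{b_{k-1}}\mathcal{L}^{b_k}$, the other's ends $\cdots\mathcal{R}^{b_{k-1}}\mathcal{L}^{b_k}\cdot\mathcal{L}\mathcal{R}^{m-1}$ (this is the content of Figure~\ref{cutf}) — then applies Theorem~\ref{maincut} to both, splitting on $m\geq 2$ versus $m=1$ and on orientation, with the remaining cases dismissed as ``similar.'' You instead localize at the unique shared Farey edge $N\mathbb{I}$ and never compare geodesics at all: your identities $L^k0=k$, $R^k0=0$, $L^j1=j+1$, $R^j1=1/(j+1)$ show (correctly) that one endpoint is the mediant of the parent edge, whose word is $N$ minus its last letter, and the other is the mediant of the ancestor obtained by deleting the final block and decrementing the preceding exponent; canonicalizing via the rational case of Theorem~\ref{maincut} and splitting on whether the last two block lengths $e_m,e_{m-1}$ equal $1$ or exceed $1$ produces exactly the four displayed forms, with $m=e_m$ in the generic case. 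What your route buys is that the paper's figure-dependent suffix claim and its ``other cases are similar'' become explicit, checkable word identities, and the agreement of the first $r$ places falls out structurally because both endpoints' words are truncations of $N$; what it costs is the canonicalization bookkeeping you flag, which is routine but real. Two small points to tighten when writing it up: in the $[a_0,\dots,a_r\pm1]$ case ($e_m=1$, $e_{m-1}\geq 2$) the two expansions have equal length, so your claim that the inherited endpoint is ``always the shorter'' should be weakened to ``never longer,'' with either labelling admissible — that is precisely where the $\pm$ comes from; and the degenerate addresses you mention ($0$, the root edge, and words where the ``preceding block'' does not exist) do conform to the statement (e.g.\ the neighbors $0/1=[0]$ and $1/m=[0,m]$) but need the one-line check rather than a wave.
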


\SpecialCoor
\psset{griddots=5,subgriddiv=0,gridlabels=0pt}
\psset{xunit=0.5cm, yunit=0.5cm, runit=0.5cm}
\psset{linewidth=0.7pt}
\psset{dotsize=7pt 0,dotstyle=*}
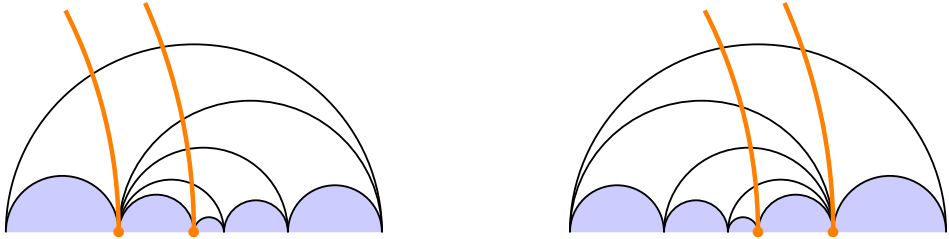
\begin{figure}[ht]
\centering

\begin{pspicture}(0,0)(25,6) 

\rput(5,3){%
\begin{pspicture}(0,0)(10,6) 

\psset{arrowscale=1.4,arrowinset=0.3,arrowlength=1.1}
\newrgbcolor{light}{0.99609375 0.71875 0.6171875}
\newrgbcolor{pale}{1 0.7 1}
\newrgbcolor{pale}{1 0.7 0.4}
\newrgbcolor{pale}{0.9179 0.7539 0.5781}
\newrgbcolor{pale}{0.996094, 0.71875, 0.617188}
\newrgbcolor{pale}{0.957031 0.726563 0.726563}

\newrgbcolor{cx}{0.8 0.8 1.0}

\psarc(5,0){5}{0}{180}
\psarc[fillstyle=solid,fillcolor=cx](1.5,0){1.5}{0}{180}
\psarc[fillstyle=solid,fillcolor=cx](4,0){1}{0}{180}
\psarc[fillstyle=solid,fillcolor=cx](5.4,0){0.4}{0}{180}
\psarc[fillstyle=solid,fillcolor=cx](6.65,0){0.85}{0}{180}
\psarc[fillstyle=solid,fillcolor=cx](8.75,0){1.25}{0}{180}

\psarc(4.4,0){1.4}{0}{180}
\psarc(5.25,0){2.25}{0}{180}
\psarc(6.5,0){3.5}{0}{180}

\psset{linewidth=1.7pt}
\psarc[linecolor=orange](-10,0){13}{0}{27}
\psarc[linecolor=orange](-10,0){15}{0}{24}

\psset{dotsize=4pt 0,dotstyle=*}

\psdot[linecolor=orange](3,0)
\psdot[linecolor=orange](5,0)
\end{pspicture}}

\rput(20,3){%
\begin{pspicture}(-10,0)(0,6) 

\psset{arrowscale=1.4,arrowinset=0.3,arrowlength=1.1}
\newrgbcolor{light}{0.99609375 0.71875 0.6171875}
\newrgbcolor{pale}{1 0.7 1}
\newrgbcolor{pale}{1 0.7 0.4}
\newrgbcolor{pale}{0.9179 0.7539 0.5781}
\newrgbcolor{pale}{0.996094, 0.71875, 0.617188}
\newrgbcolor{pale}{0.957031 0.726563 0.726563}

\newrgbcolor{cx}{0.8 0.8 1.0}

\psarc(-5,0){5}{0}{180}
\psarc[fillstyle=solid,fillcolor=cx](-1.5,0){1.5}{0}{180}
\psarc[fillstyle=solid,fillcolor=cx](-4,0){1}{0}{180}
\psarc[fillstyle=solid,fillcolor=cx](-5.4,0){0.4}{0}{180}
\psarc[fillstyle=solid,fillcolor=cx](-6.65,0){0.85}{0}{180}
\psarc[fillstyle=solid,fillcolor=cx](-8.75,0){1.25}{0}{180}

\psarc(-4.4,0){1.4}{0}{180}
\psarc(-5.25,0){2.25}{0}{180}
\psarc(-6.5,0){3.5}{0}{180}

\psset{linewidth=1.7pt}
\psarc[linecolor=orange](-18,0){13}{0}{27}
\psarc[linecolor=orange](-18,0){15}{0}{24}

\psset{dotsize=4pt 0,dotstyle=*}

\psdot[linecolor=orange](-3,0)
\psdot[linecolor=orange](-5,0)
\end{pspicture}}

\end{pspicture}
\caption{Ends of cutting sequences for Farey neighbors}
\label{cutf}
\end{figure}

\begin{proof}
By adding the same large integer to $a/b$ and $c/d$, we may assume they are both positive. Figure \ref{cutf} shows the typical ends of cutting sequence geodesics for Farey neighbors, with one passing through $m=3$ extra Farey edges. 
For the situation on the left of Figure \ref{cutf}, label the two rational endpoints as $x_1<x_2$. Suppose the cutting sequence of $x_1$ ends with $\dots \mathcal R^{b_{k-1}} \mathcal L^{b_{k}}$. Then by Theorem \ref{maincut}, the continued fraction for $x_1$ ends with $[ \dots, b_{k-1}, b_k +1]$. The cutting sequence for $x_2$  ends with 
$
  \dots \mathcal R^{b_{k-1}} \mathcal L^{b_{k}} \cdot \mathcal L \mathcal R^{m-1},
$
so that its continued fraction ends with
\begin{align*}
  [ \dots, b_{k-1}, b_k +1, m] & \qquad \text{if} \quad m \gqs 2, \\
  [ \dots, b_{k-1}, b_k +2] & \qquad \text{if} \quad m =1,
\end{align*}
by Theorem \ref{maincut} again.
The other cases are similar and the proof follows.
\end{proof}

\section{Further results and variations} \la{fur}

Suppose $z=[a_0, a_1, \dots, a_n+z_0]$. Then what is the continued fraction representation of $\overline z$? By conjugation,
\begin{equation} \la{ov}
  \overline z=[a_0, a_1, \dots, a_n+\overline{z_0}],
\end{equation}
but \e{ov} is a continued fraction for $\overline z$ only if $\overline{z_0} \in \Fd'$. This is the case for $z_0$ in the interior of $\Fd'$, or equaling $0$ or one of the corners $\rho, \rho^2, \rho^4, \rho^5$. Otherwise, $\overline{z_0}$ is in one of the dashed borders of $\Fd'$ shown in Figure \ref{funds}. For $t>\sqrt{3}/2$ these borders are: $B_1$  above $\rho$ given by $1/2+i t$,  $B_2$ between $\rho^2$ and $0$ given by $-1/(1/2+i t)$, $B_3$ between $0$ and $\rho^5$  given by  $1/(1/2+i t)$, and $B_4$  below $\rho^4$  given by  $-1/2-i t$. In these cases we must adjust the end of \e{ov} as follows:
\begin{equation} \la{tex}
\begin{aligned}
 \text{if \ $\overline{z_0} \in B_1$ \quad then}\quad \overline z & =[a_0,a_1, \dots, a_{n-1}, a_n+1+(-z_0)],\\
 \text{if \ $\overline{z_0} \in B_2$ \quad then}\quad  \overline z & =\begin{cases} 
                         [a_0, a_1, \dots, a_{n-1}, a_n-1, 1+z_0] & \quad \text{if} \quad a_n \gqs 2; \\
                         [a_0, a_1, \dots, a_{n-1}+ 1+z_0] &  \quad \text{if} \quad a_n=1,
                       \end{cases}\\
 \text{if \ $\overline{z_0} \in B_3$ \quad then}\quad   \overline z & =[a_0,a_1, \dots, a_{n-1}, a_n, 1+(-1/z_0)], \\
 \text{if \ $\overline{z_0} \in B_4$ \quad then}\quad \overline z & =\begin{cases} 
                         [a_0, a_1, \dots, a_{n-1}, a_n-1+(-z_0)] &  \quad \text{if} \quad a_n \gqs 2; \\
                         [a_0, a_1, \dots, a_{n-1} +(-1/z_0)] &  \quad \text{if} \quad a_n=1.
                       \end{cases}
\end{aligned}
\end{equation}
These were found by computing the continued fraction of $\overline{z_0}$ and are straightforward to verify.

Our continued fraction algorithm uses  the region $\Fd'$ based on the fundamental domain $\Fd$. We see next that little changes if we instead use  the region
\begin{equation*}
  \Gd' := \Gd \cup S\Gd \cup \{0\}\cup -(\Gd \cup S\Gd) 
\end{equation*}
based on the fundamental domain $\Gd$ from \e{gfun} with different boundary. 
Add the region used as a subscript in the representation, so that $[a_0, a_1, \dots, a_r+z_0]_{\Fd'}$ is what he have been studying and $[a_0, a_1, \dots, a_r+z_0]_{\Gd'}$ is the new expansion.
Since
\begin{equation*}
  \Gd'  = \Gd \cup S\Gd \cup \{0\}\cup  \overline \Fd \cup S\overline \Fd = \overline \Fd',
\end{equation*}
it may be seen that
\begin{equation} \la{gf}
  z=[a_0, a_1, \dots, a_r+z_0]_{\Gd'} \qquad \iff \qquad \overline{z} = [a_0, a_1, \dots, a_r+\overline{z_0}]_{\Fd'}.
\end{equation}
The notion of canonical is the same for $[a_0, a_1, \dots, a_r+z_0]_{\Gd'}$, namely $z_0 \neq \rho^2, \rho^4$. Theorem \ref{main} is true for the  $[\dots ]_{\Gd'}$ continued fractions; this follows directly from \e{gf} and  Theorem \ref{main}. In fact all the results in this paper also hold for $[\dots ]_{\Gd'}$ with $\Gd'$ replacing $\Fd'$. The only exception is Theorem \ref{bm2} since $z_p \in \Fd$ is required for Gauss reduction. Of course another form of reduction could be based on $z_p \in \Gd$.


\begin{prop}
Suppose $z=[a_0, a_1, \dots, a_n+z_0]_{\Fd'}$. Then to give its $\Gd'$ representation:
\begin{align*}
\text{if \ $\overline{z_0} \in \Fd'$ \quad then}\quad  z & =[a_0,a_1, \dots, a_{n-1}, a_n+z_0]_{\Gd'},\\
 \text{if \ $\overline{z_0} \in B_1$ \quad then}\quad  z & =[a_0,a_1, \dots, a_{n-1}, a_n+1+(-\overline{z_0})]_{\Gd'},\\
 \text{if \ $\overline{z_0} \in B_2$ \quad then}\quad   z & =\begin{cases} 
                         [a_0, a_1, \dots, a_{n-1}, a_n-1, 1+\overline{z_0}]_{\Gd'} & \quad \text{if} \quad a_n \gqs 2; \\
                         [a_0, a_1, \dots, a_{n-1}+ 1+\overline{z_0}]_{\Gd'} &  \quad \text{if} \quad a_n=1,
                       \end{cases}\\
 \text{if \ $\overline{z_0} \in B_3$ \quad then}\quad    z & =[a_0,a_1, \dots, a_{n-1}, a_n, 1+(-1/\overline{z_0})]_{\Gd'}, \\
 \text{if \ $\overline{z_0} \in B_4$ \quad then}\quad  z & =\begin{cases} 
                         [a_0, a_1, \dots, a_{n-1}, a_n-1+(-\overline{z_0})]_{\Gd'} &  \quad \text{if} \quad a_n \gqs 2; \\
                         [a_0, a_1, \dots, a_{n-1} +(-1/\overline{z_0})]_{\Gd'} &  \quad \text{if} \quad a_n=1.
                       \end{cases}
\end{align*}
\end{prop}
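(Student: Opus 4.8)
The plan is to obtain this proposition as an immediate consequence of the conjugation equivalence \e{gf} combined with the explicit conjugate formulas \e{tex}. The key point is that \e{tex} already computes the $\Fd'$ continued fraction of $\overline{z}$ in every case, while \e{gf} converts any $\Fd'$ representation of $\overline{z}$ into a $\Gd'$ representation of $z$. So no new computation is needed: the proposition is exactly the composition of these two facts.

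First I would record the precise form of this conversion. Suppose $\overline{z}=[c_0,c_1,\dots,c_k+v_0]_{\Fd'}$ is a valid continued fraction. Applying \e{gf} with its tail term taken to be $\overline{v_0}$ (so that the right-hand side of \e{gf} reads $\overline{z}=[c_0,\dots,c_k+v_0]_{\Fd'}$, matching our representation), we obtain $z=[c_0,\dots,c_k+\overline{v_0}]_{\Gd'}$. Thus the $\Gd'$ representation of $z$ is obtained from any $\Fd'$ representation of $\overline{z}$ simply by conjugating the final tail term and leaving the integer coefficients untouched.

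Second, I would invoke \e{tex}, which gives the $\Fd'$ continued fraction of $\overline{z}$ directly from the data $(a_0,\dots,a_n,z_0)$ of the given $\Fd'$ representation of $z$, with the five-way case split governed by whether $\overline{z_0}$ lies in $\Fd'$, $B_1$, $B_2$, $B_3$ or $B_4$; the case $\overline{z_0}\in\Fd'$ is the naive conjugation \e{ov}. Applying the tail-conjugation of the previous step to each of the five cases of \e{tex} then yields exactly the five displayed formulas: conjugating the tail $-z_0$ gives $-\overline{z_0}$, conjugating $z_0$ gives $\overline{z_0}$, conjugating $-1/z_0$ gives $-1/\overline{z_0}$, and so forth, matching the proposition line by line.

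The only point needing care — and it is not a genuine obstacle — is that each representation produced by \e{tex} is a legitimate $\Fd'$ continued fraction, that is, its tail lies in $\Fd'$ and its integer coefficients are positive after the first, so that \e{gf} is applicable. This is automatic, since \e{tex} is precisely the assertion that these are continued fractions of $\overline{z}$; moreover the resulting $\Gd'$ tails lie in $\Gd'=\overline{\Fd'}$ as required, since conjugation carries the $\Fd'$ tails of \e{tex} into $\Gd'$.
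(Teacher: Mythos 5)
Your proposal is correct and follows precisely the paper's own proof, which consists of the single line ``Combine \eqref{tex} and \eqref{gf}'': you have simply written out that composition explicitly, conjugating the tail of each of the five cases of \eqref{tex} and applying \eqref{gf}, and each resulting formula matches the proposition line by line.
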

\begin{proof}
Combine \e{tex} and \e{gf}.
\end{proof}


\SpecialCoor
\psset{griddots=5,subgriddiv=0,gridlabels=0pt}
\psset{xunit=2cm, yunit=2cm, runit=2cm}
\psset{linewidth=1pt}
\psset{dotsize=7pt 0,dotstyle=*}
\begin{figure}[ht]
\centering
\begin{pspicture}(-0.5,-0.9)(4.5,1.1) 

\psset{arrowscale=1.4,arrowinset=0.3,arrowlength=1.1}
\newrgbcolor{light}{0.8 0.8 1.0}
\newrgbcolor{pale}{1 0.7 1}
\newrgbcolor{pale}{1 0.7 0.4}
\newrgbcolor{pale}{0.9179 0.7539 0.5781}
\newrgbcolor{pale}{0.996094, 0.71875, 0.617188}
\newrgbcolor{greendark}{0.1 0.1 1.0}
\newrgbcolor{greenlight}{0.8 0.8 1.0} 

\newrgbcolor{cx}{0.8 0.8 1.0}


\rput(0.5,0.){
\begin{pspicture}(-0.5,-1)(1.5,1) 

\psset{linecolor=greendark}

\pswedge[linecolor=greenlight,fillstyle=solid,fillcolor=greenlight](0,0){1}{-60}{60}
\pswedge[linecolor=greenlight,fillstyle=solid,fillcolor=greenlight](1,0){1}{120}{240}

\psarc(0,0){1}{0}{60}
\psarc(1,0){1}{180}{240}

\psarc[linestyle=dashed](0,0){1}{-60}{0}
\psarc[linestyle=dashed](1,0){1}{120}{180}

\pscircle[linecolor=black,fillstyle=solid,fillcolor=white](0.5,0.866025){0.04}
\pscircle[linecolor=black,fillstyle=solid,fillcolor=white](0.5,-0.866025){0.04}
\pscircle[linecolor=black,fillstyle=solid,fillcolor=white](0.,0){0.04}
\pscircle[linecolor=black,fillstyle=solid,fillcolor=white](1,0){0.04}

\psset{linecolor=black}


\psline[linecolor=gray](-0.5,0)(1.5,0)


\pscircle[linecolor=black,fillstyle=solid,fillcolor=white](0.5,0.866025){0.04}
\pscircle[linecolor=black,fillstyle=solid,fillcolor=white](0.5,-0.866025){0.04}
\pscircle[linecolor=black,fillstyle=solid,fillcolor=white](0.,0){0.04}
\pscircle[linecolor=black,fillstyle=solid,fillcolor=white](1,0){0.04}

\rput(-0.25,0.4){$\mathcal C$}

\rput(-0.1,-0.12){$_0$}
\rput(1.1,-0.12){$_1$}

\end{pspicture}}

\rput(3.5,0.){
\begin{pspicture}(-0.5,-1)(1.5,1) 

\psset{linecolor=greendark}

\pscircle[linestyle=dashed,fillstyle=solid,fillcolor=greenlight](0.5,0.){0.51}

\psset{linecolor=black}


\psline[linecolor=gray](-0.5,0)(1.5,0)
\psline[linecolor=gray](0,0.04)(0,-0.04)
\psline[linecolor=gray](1,0.04)(1,-0.04)


\rput(-0.25,0.4){$\mathcal D$}

\rput(-0.1,-0.12){$_0$}
\rput(1.1,-0.12){$_1$}

\end{pspicture}}

\end{pspicture}
\caption{The regions $\mathcal C$ and $\mathcal D$}
\label{simple}
\end{figure}
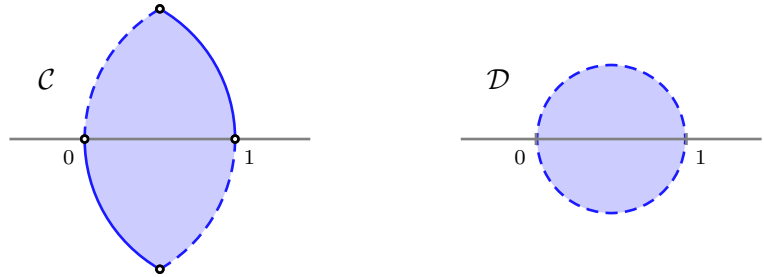

Returning to  $[\dots ]_{\Fd'}$, we finish with two equivalent versions of our continued fraction algorithm from the introduction. They simplify the halting criterion at the expense of a slightly more complicated description of the output. Let 
\begin{align*}
  \mathcal C & := \big\{ z  \in \C \, : \, |z|\lqs 1, |z-1|\lqs 1, \lnot( |z|= 1, \Im(z) \lqs 0), \lnot( |z-1|= 1, \Im(z) \gqs 0) \big\},\\
  \mathcal D & := \{ z\in \C  \, : \, |z-1/2| < 1/2\},
\end{align*}
as shown in Figure \ref{simple}.

\newpage

\begin{algo2}
Start with index $i=0$.
\begin{itemize}
  \item Let $a_i=m=\lfloor \Re(z)\rfloor$. If $z-m \in \mathcal C$  then replace $z$ by $1/(z-m)$, increment $i$ and repeat this step. 
      If $z-m \notin \mathcal C$, let $w=z-m$ and finish.
\end{itemize}
If the algorithm doesn't finish then the output is $(a_0,a_1, \dots )$. Otherwise let $n$ denote the last $i$, and the output is $(a_0,a_1, \dots, a_n)$ along with $z_0=w$ if $w\in \Fd'$ or  $(a_0,a_1, \dots, a_n+1)$ along with $z_0=w-1$ if $w \notin \Fd'$. 
\end{algo2}

The region $\mathcal C$ lies between $\Fd'$ and $1+\Fd'$, as seen in Figure \ref{funds}. So it is straightforward to see that Version 2 is equivalent to Version 1, giving a canonical output.

\begin{algo3}
Start with index $i=0$.
\begin{itemize}
  \item Let $a_i=m=\lfloor \Re(z)\rfloor$. If $z-m \in \mathcal D$  then replace $z$ by $1/(z-m)$, increment $i$ and repeat this step. 
      If $z-m \notin \mathcal D$, let $w=z-m$ and finish.
\end{itemize}
If the algorithm doesn't finish then the output is $(a_0,a_1, \dots )$. Otherwise let $n$ denote the last $i$, and the output has three cases:
\begin{align*}
  (a_0,a_1, \dots, a_n) & \quad \text{with \quad $z_0=w$ \quad if $w \in \Fd'$} \\
  (a_0,a_1, \dots, a_n+1) & \quad \text{with \quad $z_0=w-1$ \quad if $w \notin \Fd'$ and $w-1 \in \Fd'$} \\
  (a_0,a_1, \dots, a_n,1) & \quad \text{with \quad $z_0=1/w-1$ \quad otherwise.}
\end{align*}
\end{algo3}

The loop in Version 3 is equivalent to the algorithm studied in \cite{DFV}; see Section \ref{latt}.
Version 3 finishes earlier than Version 2 when $w \in \mathcal C - \mathcal D$. This region is contained in $R \Fd'$, as can be seen on the left of Figure \ref{funds2}, so we may write
\begin{equation*}
  w=R(1/z_0) =  1/(1+z_0) \quad \text{for} \quad z_0 \in \Fd'
\end{equation*}
in this case. Hence Version 3 is equivalent to Version 2.

{\small \bibliography{cfbib} }

{\small 
\vskip 5mm
\noindent
\textsc{Department of Mathematics, The CUNY Graduate Center, 365 Fifth Avenue, New York, NY 10016-4309, U.S.A.}

\noindent
{\em E-mail address:} \texttt{cosullivan@gc.cuny.edu}
}

\end{document}